\documentclass[11pt]{article}
\usepackage{amsthm}
\newtheorem{theorem}{Theorem}
\newtheorem{lemma}{Lemma}
\newtheorem{assumption}{Assumption}
\newtheorem{conjecture}{Conjecture}
\newtheorem{definition}{Definition}
\usepackage{amsmath,amssymb,amsfonts}
\usepackage{physics}
\usepackage{geometry}
\usepackage{hyperref}
\usepackage{fancyhdr}
\usepackage{titlesec}
\usepackage{enumitem}
\usepackage{graphicx}
\usepackage{xcolor}
\usepackage{caption}
\usepackage{tikz}
\usepackage{tikz}
\usetikzlibrary{arrows.meta, backgrounds, fit}
\usepackage{xcolor}
\usepackage{booktabs}
\usepackage{array}
\usepackage{enumitem}
\usetikzlibrary{arrows.meta, positioning, shapes, calc}

\title{\textbf{Morrey’s Derivation of Hydrodynamics from Statistical Mechanics: A Modern Exposition}}
\author{Yuyang Wang}

\begin{document}
\maketitle
\begin{abstract}
This paper is a modern review of C. B. Morrey's 1955 paper on the derivation of the equations of hydrodynamics from statistical mechanics. It has two main parts. First, assuming the existence of certain invariant $N$-particle phase distributions, Morrey derives formal balance laws for mass, momentum, and energy. Second, he attempts to construct a concrete sequence of such distributions by imposing cell-wise constraints that match the prescribed macroscopic fields $(\rho,u,e)$. From this construction, he obtains a Gibbs-type limiting form and finally the Euler equations. Compared to classical results, this regime employs the scaling law of $N\epsilon^3=O(1)$ and leads to a more flexible pressure $P_{eq}$. Our goal is to explain what Morrey is trying to do, why it's natural and where its most delicate points lie. We will rewrite the argument in a more transparent way, distinguishing carefully between formal derivations, physical input, and statements that still need justification.
\end{abstract}

\tableofcontents

\section*{Acknowledgments}
I would like to thank Professor Zaher Hani for suggesting
this project and for his guidance and advice throughout.
Our conversations on the mathematics of Morrey's paper were
essential to this work.
His research on Hilbert's sixth problem inspired my
interest in this area, and I am glad to have had the
chance to explore a small corner of it.

This thesis was written during my final year at the
University of Michigan, Ann Arbor. It was a good time.

\section{Introduction}\label{sec:setup}

One of the central problems in mathematical physics is to explain how the equations of fluid motion emerge from the dynamics of many interacting particles. On the macroscopic side stand continuum equations such as the compressible Euler system, written in terms of density, velocity, and thermodynamic variables. On the microscopic side stand Newton's equations for a large system of particles interacting through short-range forces. The difficulty is that a finite particle system carries mass, momentum, and energy through point particles, whereas the macroscopic description requires smooth fields in space and time. Morrey's paper \cite{Morrey1955} is an early and ambitious attempt to make this passage mathematically intelligible.

A standard way to bridge this gap is to introduce a probability distribution on phase space, so that singular particle observables are replaced by reduced densities governed by Liouville's equation and the BBGKY hierarchy. This viewpoint goes back to Gibbs and was further developed in the statistical theory of liquids and gases, especially in the work of Born and Green \cite{BornGreen1949}. Morrey's paper belongs to an early stage of the broader program of deriving continuum equations from particle dynamics, a program that later led to Lanford's derivation of Boltzmann, the Newton-to-kinetic theory, and kinetic-to-fluid limits for the Euler equations \cite{Lanford1975,GallagherSaintRaymondTexier2014,Caflisch1980,BardosGolseLevermore1991,DengHaniMa2025}.

The present paper does not claim that Morrey's derivation is fully rigorous
in the modern sense; one of our main purposes is precisely to determine where,
and why, it falls short. Taking Morrey's 1955 argument as our starting point,
we recast it in a more transparent language and modern notation, and --- more
substantively --- we separate it into its logically distinct components:
formal derivations, rigorously proved estimates, heuristic physical input, and
genuine analytical gaps. Several steps that are stated only implicitly or in
passing in the original are here isolated as explicit hypotheses, and we
identify the minimal set of unproved statements on which the final result
depends. In particular, we single out the role of the constrained phase-space
measures \eqref{3.3}, the ergodic-type assumption (Assumption~\ref{assump:ergodic})
used to relate microscopic motion to macroscopic averages, and the unresolved
steps in the passage from the finite-$N$ distributions to the limiting Gibbs
hierarchy, which we formulate as a small number of precise conjectures. The
outcome is both an exposition and an analysis: it makes the original
construction usable with current tools, and it locates the real mathematical
difficulties of this route to the fluid equations within a clearly delimited
set of open problems.

\subsection{Basic setup }
In this paper, we consider systems of $N$ identical point-mass particles, each of mass $m_N$, any two of which repel each other with a force 
\[
- m_N^2 \Phi_N'(r),
\] 
where $\Phi_N(r)$ is a potential depending only on the inter-particle distance $r$.  
(If $\Phi_N'(r) > 0$, the particles repel; if $\Phi_N'(r) < 0$, they attract.)  

Following Morrey, we impose the \textbf{scaling law}: 
\begin{equation}\label{1.1}\tag{1.1}
    m_N=D\epsilon^3,\; m_N\Phi_N(r)=K\Phi(\frac{r}{\epsilon}) \;\;\text{and}\;\;Nm_N= O(1).
\end{equation}
with $\epsilon\rightarrow 0$ as $N\rightarrow\infty$. $D$ and $K$ are constants. This is the regime in which the total mass is finite while the interaction range shrinks to the microscopic scale $\epsilon$.
Furthermore, in this paper,  we assume the following for our potential $\Phi$.
\begin{assumption}\label{Phi}
    \begin{equation}
\begin{aligned}
\text{(i)} &\lim_{r\rightarrow0^+}r^s\Phi(r)=+\infty,\;\;\;\lim_{r\rightarrow+\infty}r^s\Phi(r)=0\;\; \text{for some fixed $s>3$};\\
\text{(ii)} &\quad \Phi'(r)\ \text{is continuous for } r>0\ \text{ and } \lim_{r\to\infty} r^{s+1}\Phi'(r)=0;\\[-2pt]
\text{(iii)} &\quad \exists\,\delta>0\ \text{ such that }\ 
  \limsup_{r\to 0^+}\,\frac{|r\,\Phi'(r)|}{\Phi(r)}=\delta.
\end{aligned}
\label{1.2}\tag{1.2}
\end{equation}
\end{assumption}
\textbf{Remark :}An example is $\Phi(r)=1/r^s ,\; s>3$. From our scaling law, one have $D\epsilon^3\Phi_N(r)=\epsilon^s/r^s$, which is $\Phi_N(r)=\epsilon^{s-3}/r^s$. This means the interaction $\Phi_N$ is indeed a short range force.

Let 
$
x_i = \big(x_i^1(t), x_i^2(t), x_i^3(t)\big)
$ 
denote the position of the $i$-th particle. The equations of motion are now
\begin{equation*}\label{1.3}
\dot{x}_i^\alpha = u_i^\alpha, 
\qquad 
\dot{u}_i^\alpha = -\frac{\partial H_{NN}}{\partial x_i^\alpha}, 
\qquad \alpha = 1,2,3,
\tag{1.3}
\end{equation*}
where $u_i = (u_i^\alpha)$ is the velocity vector of the particle $i$ and interaction term is 
\[
 H_{N} = \frac{K}{2}\sum_{j \neq i}^N\Phi(\frac{|x_i - x_j|}{\epsilon}).
\]

\medskip
From \eqref{1.3}, it follows that the total momentum:
\[
m_N \sum_{i=1}^N u_i^\alpha,
\]
and the Hamiltonian:
\[
\tfrac{1}{2} m_N \sum_{i=1}^N |u_i|^2 + \tfrac{1}{2} K \sum_{j \neq i}^N \Phi(\frac{|x_i - x_j|}{\epsilon}),
\]
is conserved.

\subsubsection{Liouville's equation}
Now, assume at $t=0$, the N-particle density $\pi_{NN}(t=0;x_1,...x_N;u_1,...,u_N)$ is given, we will have\[
\pi_{NN}(t;x;u)=\pi_{NN}(0;\mathcal{H}^{-t}(x;u))
\]
where $\mathcal{H}^{-t}(x;u)$ is the time reversal of $(x,u)$ under flow \eqref{1.3}. 
Because the Hamiltonian flow \eqref{1.3} preserves the phase-space volume, the full density $\pi_{NN}$ should satisfy \textbf{Liouville's equation}
\begin{equation}\label{1.4}\tag{1.4}
    \frac{\partial \pi_{NN}}{\partial t}
+
\sum_{j=1}^N
\big(
u_j^\alpha\,\frac{\partial \pi_{NN}}{\partial x_j^\alpha}
-
\frac{\partial H_{NN}}{\partial x_j^\alpha}\,\frac{\partial \pi_{NN}}{\partial u_j^\alpha}
\big)
=0.
\end{equation}
By BBGKY hierarchy, the $l$-density distribution $\pi_{Nl}$ is 
\begin{equation}\label{1.5}\tag{1.5}
    \pi_{Nl}(t;\,x_1,u_1;\,\dots;\,x_l,u_l)
=
\int_{-\infty}^{\infty}
\pi_{NN}(t;\,x_1,u_1;\,\dots;\,x_{N},u_{N})\,dx_{l+1}du_{l+1}...dx_Ndu_N.
\end{equation}

Assuming derivatives may pass under the integral in \eqref{1.5}, successive integration of \eqref{1.4} yields
\begin{equation*}\label{1.6}
\frac{\partial\pi_{Nl}}{\partial t}
+
\sum_{j=1}^l
\big(
u_j^\alpha\,\frac{\partial\pi_{Nl}}{\partial x_j^\alpha}
-
\frac{\partial H_{Nl}}{\partial x^\alpha_j}\,\frac{\partial\pi_{Nl}}{\partial u_j^\alpha}
\big)
=
-\frac{(N-l)K}{\epsilon}
\int_{-\infty}^{\infty}
\sum_{j=1}^l
\Phi'\!\big(\frac{|x_j-x_{l+1}|}{\epsilon}\big)\,
\frac{x_{l+1}^\alpha-x_j^\alpha}{|x_{l+1}-x_j|}
\;
\frac{\partial\pi_{N,l+1}}{\partial u_j^\alpha}\,dx_{l+1}\,du_{l+1},
\tag{1.6}
\end{equation*}
for $l=1,\dots,N-1$, with
\[
H_{Nl} \;=\; \frac{K}{2}\sum_{j\neq k}^l \Phi\!\big(\frac{|x_j-x_k|}{\epsilon}\big).
\]
\textbf{Remark :} The $l$-particle equation couples to the $(l+1)$-particle marginal through the interaction force. In the computation, the RHS are all contributed to integrating the term $H_{N(l+1),x_j^\alpha} \pi_{N(l+1),u_j^\alpha}$, $j<l+1$. Other terms vanish.

For a smooth observable $g$ on phase space, let $T_\tau$ denote the
Hamiltonian flow map of \eqref{1.3} for time $\tau$. We write
$\mu(\,\cdot\,)$ for \emph{integration against the measure} $\mu$, i.e.
\[
\mu(g):=\int_{\mathcal X} g\,d\mu .
\]
Two dual operators act here. On observables, $T_\tau^{*}$ is the pull-back
(Koopman) operator
\[
(T_\tau^{*}g)(x):=g(T_\tau x)=g\circ T_\tau ,
\]
while on measures $T_\tau$ is the push-forward (transfer) operator, defined
so that the change-of-variables identity holds:
\begin{equation}\label{duality}\tag{1.7}
(T_\tau\mu)(g)=\int g\,d(T_\tau\mu)=\int (g\circ T_\tau)\,d\mu=\mu(T_\tau^{*}g).
\end{equation}
Thus $\mu(T_\tau^{*}g)$ is a \emph{number}: the integral of the pulled-back
observable $T_\tau^{*}g$ against $\mu$, equivalently the integral of $g$
against the pushed-forward measure $T_\tau\mu$. (The bracket $\mu(\,\cdot\,)$
is always the integration pairing, never itself a push-forward; the
push-forward is the dual object $T_\tau\mu$.)

The \emph{Liouville operator} $\mathcal L$ is the generator of the pull-back
semigroup $T_\tau^{*}$ acting on observables \cite{CIP1994}:
\begin{equation}\label{Liouvilleop}\tag{1.8}
\mathcal L g
:=\frac{d}{d\tau}\Big|_{\tau=0}\!\big(T_\tau^{*}g\big)
=\{g,H_{NN}\}
=\sum_{j=1}^N\left(u_j^\alpha\,\frac{\partial g}{\partial x_j^\alpha}
-\frac{\partial H_{NN}}{\partial x_j^\alpha}\,\frac{\partial g}{\partial u_j^\alpha}\right),
\end{equation}
so that $\dfrac{d}{d\tau}\,T_\tau^{*}g=T_\tau^{*}\mathcal L g$; this is exactly
the operator and identity appearing in \eqref{1.10}. By duality the phase-space
density evolves by the formal adjoint $-\mathcal L$, which is precisely
Liouville's equation \eqref{1.4}:
\[
\partial_t\pi_{NN}=-\mathcal L\,\pi_{NN}
=-\sum_{j=1}^N\left(u_j^\alpha\,\frac{\partial \pi_{NN}}{\partial x_j^\alpha}
-\frac{\partial H_{NN}}{\partial x_j^\alpha}\,\frac{\partial \pi_{NN}}{\partial u_j^\alpha}\right).
\]

\subsubsection{Gibbs distribution}
\medskip

We close the setup by recording the equilibrium measures to which the
macroscopic fields will be referred, since the same Gibbs measure appears both
in the local-equilibrium distribution (Definition~\ref{def:localequi}) below and in Section~\ref{Section:limit}. 

Given macroscopic data
$(\rho,\bar u,e)$ --- a density $\rho>0$, a mean velocity $\bar u\in\mathbb R^3$,
and a specific total energy $e$ --- the associated \emph{Gibbs measure}
$\nu_{(\rho,\bar u,e)}$ is the equilibrium probability measure on phase space
whose $l$-particle correlation functions take the form
\begin{align*}
&\nu^{(l)}_{(\rho,\bar u,e)}(x_1,u_1;\dots;x_l,u_l)
\\=&
\rho^{\,l}\Big(\tfrac{A}{\pi}\Big)^{3l/2}
\exp\!\Big\{-A\!\sum_{i=1}^{l}\|u_i-\bar u\|^2\Big\}\,
\exp\!\Big\{-2A\,K\!\!\sum_{\substack{i,j=1\\ i< j}}^{l}\Phi\big(|x_i-x_j|/\epsilon\big)\Big\}\,
g_l(x_1,\dots,x_l),\label{gibbs}\tag{1.9}
\end{align*}
that is, a Maxwellian in the velocities centered at the mean velocity $\bar u$,
a Boltzmann weight $e^{-A\,K\sum_{i<j}\Phi}$ in the interaction energy, and a
spatial correlation factor $g_l$ encoding the deviation from statistical
independence (so that $g_l\to1$ when all particles are infinitely separated).
Here the scalar $A>0$ and the correlation $g_l$ are \emph{not} free: they are
the quantities conjugate to $(\rho,\bar u,e)$, fixed by the requirement that the
mean density, momentum, and energy computed from $\nu_{(\rho,\bar u,e)}$ equal
the prescribed fields. Concretely, $\bar u$ is the mean velocity, $\rho$ is the
mean particle density, and $A$ is determined by the internal energy through the
equipartition/virial relation made explicit in Section~\ref{sectioneuler}
(equations \eqref{eos-eps}--\eqref{eos-C}); this is the equation of state. The
construction and the characterization of such equilibrium measures via the
Dobrushin--Lanford--Ruelle equations are classical, see
\cite{Ruelle1969,BornGreen1949}.

We write $\nu_{G(t,x)}$ for the Gibbs measure whose parameters realize the local
fields $G(t,x)=(\rho,\bar u,e)(t,x)$; this is the measure appearing in
Definition~\ref{def:localequi}. The functions $\phi_l$ of the present paper are by
definition the $l$-particle densities of this Gibbs measure, written in the
local (relative) coordinates \eqref{2.3}: substituting $u_i=u+v_{i-1}$ and
$x_i=x+\epsilon\xi_{i-1}$ into \eqref{gibbs} and recalling
$2\mathcal X_l=K\sum_{i\neq k}\Phi(|\xi_i-\xi_k|)$ from \eqref{2.6} gives exactly
the limiting form
\[
\phi_l=(A/\pi)^{3l/2}\exp\!\Big\{-A\Big[\textstyle\sum_{i=0}^{l-1}\|u+v_i-\bar u\|^2+2\mathcal X_l\Big]\Big\}\,
\rho^l\,g_l(\xi_1,\dots,\xi_{l-1};\rho,A),
\]
which is precisely \eqref{6.1}. Thus the entire construction of
Sections~\ref{section3}--\ref{Sectiongl} may be read as a single statement: the
constrained marginals $\phi_{Nl}$ built from the cell data $(\rho,\bar u,e)$
converge, in the hydrodynamic limit, to the $l$-particle densities $\phi_l$ of
the Gibbs measure $\nu_{(\rho,\bar u,e)}$, with the parameter $A$ and the
correlations $g_l$ determined self-consistently by $(\rho,\bar u,e)$.

\subsection{Background motivation} \label{subsec:motivation}
To motivate the form of the limiting equations, we begin with a related result from \cite[Section 0]{DeMasiEtAl1984}, which clarifies the type of local equilibrium structure that should lead to the Euler equations. Let $\mathcal{X}$ be the abstract particle phase space. $P\in\mathcal{X}$ is a locally-finite microscopic configuration.

\begin{definition}[Local equilibrium distributions]\label{def:localequi}
A \emph{local equilibrium distribution} is a family $\mu^\epsilon$ of probability
measures on $\mathcal X$ such that:

\begin{enumerate}
\item[(i)] For every $t > 0$, the time evolution $T_\tau$ is defined for times
$\tau \le \epsilon^{-1}t$ on a set of full $\mu^\epsilon$-measure,
for $\epsilon \in (0,1]$. 

If $g$ is a smooth cylindrical function and $\mathcal L$ i
s the Liouville operator, then
\[\label{1.10}\tag{1.10}
\forall\, \tau \le \epsilon^{-1}t,
\qquad
\frac{d}{d\tau}\,\mu^\epsilon(T_\tau^* g)
=
-\mu^\epsilon(T_\tau^* \mathcal L g),
\]
where
\[
(T_\tau^* g)(x) = g(T_\tau x)
\quad \mu^\epsilon\text{-a.s.}
\]

\item[(ii)] There exists a continuous function
\[
G(t,x) : \mathbb R_+ \times \mathbb R^d 
\longrightarrow 
\mathbb R \times \mathbb R^d \times \mathbb R_+,
\qquad
G(t,x) = (\rho(t,x), u(t,x), e(t,x)),
\]
such that for all $x \in \mathbb R^d$, $t \ge 0$,
\[
\text{(weak)} \quad
\lim_{\epsilon \to 0}
D_{\epsilon^{-1}x} T_{\epsilon^{-1}t} \mu^\epsilon
=
\nu_{G(t,x)},\label{1.11}\tag{1.11}
\]
where
\[
(T_\tau \mu^\epsilon)(g) = \mu^\epsilon(T_\tau^* g),
\qquad
(D_x \mu^\epsilon)(g) = \mu^\epsilon(D_x^* g),
\]
and $D_x^* g$ denotes the translate of $g$ by $x$.
Here $\nu_{G(t,x)}$ is the \textbf{Gibbs measure} corresponding to the
parameters $G(t,x)$.

\item[(iii)] Let $\mathcal N(x)$ denote the number of particles and
$\iota(x)$ the kinetic energy of those particles contained in the
unit ball centered at $x$. Then for every $t > 0$,
\[
\sup_{\epsilon}
\sup_{x \in \mathbb Z^d}
\sup_{\tau \le \epsilon^{-1}t}
T_\tau \mu^\epsilon\big( \mathcal N(x)^2 + \iota(x)^2 \big)
< +\infty.\label{1.12}\tag{1.12}
\]
\end{enumerate}

The function $G(t,x)$ is called the \emph{equilibrium profile}
of the local equilibrium distribution $\mu^\epsilon$.
\end{definition}

Then, \cite[Section 0]{DeMasiEtAl1984} tells us that such a local equilibrium distribution leads to the Euler equations for the corresponding macroscopic parameters.

\begin{theorem}[Theorem 1.1]
Assume that $\mu^\epsilon$ is a local equilibrium distribution
with parameters
\[
G(t,x) = (\rho(t,x), u(t,x), e(t,x))
\quad.
\]
Then the parameters satisfy the Euler equations
(for notational simplicity we set $m=1$):
\begin{align}
\partial_\tau \rho&= - \nabla \cdot (\rho v), \tag{1.13a} \\
\partial_\tau u &= - (u \cdot \nabla)u 
- \frac{1}{\rho}\nabla P_{\mathrm{eq}}, \tag{1.13b} \\
\partial_\tau e &= - (u \cdot \nabla)e
- P_{\mathrm{eq}} (\nabla \cdot u)
- e (\nabla \cdot u), \tag{1.13c}
\end{align}
where $P_{\mathrm{eq}}(\xi,\tau)$ is the thermodynamic pressure
corresponding to the equilibrium parameters
$\rho(\xi,\tau), v(\xi,\tau), e(\xi,\tau)$.
\end{theorem}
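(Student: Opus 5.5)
The proof follows the De Masi--Ianiro--Pellegrinotti--Presutti scheme: test the microscopic conservation laws against slowly varying functions and pass to the limit using the local equilibrium property \eqref{1.11}. For a smooth, compactly supported test function $J(x)$ and each conserved density $\eta\in\{1,u^\alpha,\tfrac12|u|^2+\tfrac12 K\sum\Phi\}$, form the empirical field
\[
Y^\epsilon_\eta(J)=\epsilon^{d}\sum_i J(\epsilon x_i)\,\eta_i .
\]
For the energy, the interaction part is split symmetrically between pairs. Each field is a cylindrical-type observable up to a cutoff justified by \eqref{1.12}.

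\textbf{Step 1: the time derivative is a divergence.} By \eqref{1.10}, the time derivative of $\mu^\epsilon(T^*_{\epsilon^{-1}t}Y^\epsilon_\eta(J))$ in macroscopic time $t$ equals $\epsilon^{-1}$ times the expectation of $\mathcal L Y^\epsilon_\eta(J)$. Compute $\mathcal L Y$ via \eqref{Liouvilleop}. Because mass, momentum and energy are locally conserved, the zeroth-order terms cancel: the pair forces are antisymmetric, so $\sum_{i\ne j}\Phi'(\cdot)\,(x_i-x_j)\,J(\epsilon x_i)$ may be symmetrized. Taylor-expand $J(\epsilon x_i)-J(\epsilon x_j)=\epsilon\,\nabla J\cdot(x_i-x_j)+O(\epsilon^2|x_i-x_j|^2)$. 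This gives
\[
\mathcal L Y^\epsilon_\eta(J)=\epsilon\,\epsilon^{d}\sum_i \nabla J(\epsilon x_i)\cdot w_{\eta,i}+R_\epsilon .
\]
Here $w_\eta$ is the microscopic current. For $\eta=1$ it is the momentum. For $\eta=u^\alpha$ it is the kinetic flux $u^\alpha u$ plus the virial term $-\tfrac12\sum_j \Phi'(|x_i-x_j|)\,\frac{(x_i-x_j)^\alpha(x_i-x_j)}{|x_i-x_j|}$. For the energy it is the analogous energy flux. The factor $\epsilon$ cancels the $\epsilon^{-1}$ from time rescaling. Assumption~\ref{Phi}(i)--(ii) ($s>3$ decay) make the virial sums absolutely convergent, and together with the second-moment bound \eqref{1.12} they give $\epsilon^{-1}R_\epsilon\to0$ uniformly for $t$ in compacts.

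\textbf{Step 2: replace currents by equilibrium averages.} Write the integrated identity
\[
\mu(Y(J))_t-\mu(Y(J))_0=\int_0^t \epsilon^d\sum_x \nabla J(\epsilon x)\cdot \big(D_{x}T_{\epsilon^{-1}s}\mu^\epsilon\big)(w_\eta\text{ localized near }0)\,ds+o(1),
\]
where the sum over sites is organized as a Riemann sum over unit cells. By \eqref{1.11}, each local expectation converges to $\nu_{G(s,\epsilon x)}(w_\eta)$. The currents are unbounded, so weak convergence alone is not enough. Uniform integrability comes from \eqref{1.12}, combined with a truncation of the singular potential near $0$ that is controlled by Assumption~\ref{Phi}(iii), which lets $|r\Phi'|$ be bounded by $\Phi$. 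Dominated convergence then turns the Riemann sum into $\int_0^t\!\int \nabla J(\xi)\cdot\nu_{G(s,\xi)}(w_\eta)\,d\xi\,ds$. Similarly, $\mu(Y(J))_t\to\int J\,\nu_{G(t,\xi)}(\eta)\,d\xi$.

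\textbf{Step 3: compute the Gibbs averages.} Using \eqref{gibbs}, the velocities are Maxwellian about $\bar u$ and the spatial part is translation invariant. Hence $\nu(\eta)$ gives $\rho$, $\rho u$ and $\rho e$, while $\nu(w_\eta)$ gives $\rho u$, $\rho u\otimes u+P_{\rm eq}I$ and $(\rho e+P_{\rm eq})u$. Here $P_{\rm eq}=\rho/(2A)+$ the virial term, which is the thermodynamic pressure. Odd velocity moments vanish, and rotational invariance of the virial term gives an isotropic stress. Substituting into Step 2 yields the weak form of the conservation laws. Since $G$ is continuous and the equations are stated for it, we read the statement as holding for smooth $G$; then the weak form is equivalent to the nonconservative form (1.13a)--(1.13c), using (1.13a) to remove $\partial_t\rho$.

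\textbf{Main obstacle.} The hardest point is Step 2, the replacement of $\mu^\epsilon$-expectations of unbounded, possibly singular currents by Gibbs averages. The difficulty is the potential-energy and virial parts, since \eqref{1.12} controls only particle number and kinetic energy. I would first bound $\sum_j|r\Phi'|$ near a particle by the local interaction energy using (iii). I would then bound the interaction energy by the total energy, which is conserved, and by local number squared. If that fails, I would add a local energy second-moment bound as an explicit hypothesis, in the spirit of the paper's separation of assumptions.
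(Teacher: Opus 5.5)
The paper gives no argument of its own for Theorem~1.1; it imports the result from \cite[Section 0]{DeMasiEtAl1984}. Your outline (empirical fields, divergence form via antisymmetry of the pair forces, replacement of currents by Gibbs averages, isotropy of the equilibrium stress) is the standard scheme behind that result. The outline is sound, but the proof does not close at the point you flag, and your first proposed repair cannot work.

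Assumption~\ref{Phi}(iii) does reduce the near-contact virial $\sum_j|r\Phi'(r)|$ to the local interaction energy $U(x)=\sum_{x_i\in B_1(x)}\sum_j\Phi(|x_i-x_j|)$. However, nothing in Definition~\ref{def:localequi} controls $U(x)$:
\begin{itemize}
\item It is not bounded by any function of $\mathcal N(x)$. Since $\Phi(r)\to+\infty$ as $r\to0$, configurations with $\mathcal N(x)=2$ can have arbitrarily large $U(x)$.
\item Conservation of total energy is a global statement. The total energy is infinite for the locally finite configurations of Definition~\ref{def:localequi}, and in a finite box it only bounds a spatial average in $L^1$.
\item So neither route gives the uniform-in-$x$, uniform-in-$\tau\le\epsilon^{-1}t$ uniform integrability needed to upgrade the weak convergence \eqref{1.11} to convergence of $T_\tau\mu^\epsilon(w_\eta)$. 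Weak convergence plus an $L^1$ bound yields at best a Fatou-type inequality.
\end{itemize}

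The same missing control appears everywhere the interaction enters:
\begin{itemize}
\item the energy density itself, in your claim $\mu(Y(J))_t\to\int J\,\nu_{G(t,\xi)}(\eta)\,d\xi$;
\item the energy current, which contains products of $u$ with $\Phi$ and with the virial, and couples velocities of particles outside $B_1(x)$;
\item your Step~1 remainder, since near contact $\epsilon^2r^2|\Phi'(r)|\lesssim\epsilon^2r\,\Phi(r)$, so $\epsilon^{-1}R_\epsilon\to0$ already needs an $L^1$ bound on $U$.
\end{itemize}
Even for a bounded finite-range potential, \eqref{1.12} is not by itself sufficient. The interaction stress in a unit ball is then of size $\mathcal N(x)\,\mathcal N(x')$ for a neighbouring ball $x'$. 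Second moments bound this in $L^1$ but do not make it uniformly integrable.

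So your fallback is not optional. What the argument proves is Theorem~1.1 with an extra clause in Definition~\ref{def:localequi}(iii) guaranteeing uniform integrability of the local energy density and local currents under $T_\tau\mu^\epsilon$. One sufficient form is uniform bounds on moments of order $>2$ of $\mathcal N(x)$, $\iota(x)$ and $\sum_{x_i\in B_1(x)}\sum_j(|\Phi|+|r\Phi'|)(|x_i-x_j|)$.

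Two smaller corrections.

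First, the Taylor bound $O(\epsilon^2|x_i-x_j|^2)$ is not summable against $|\Phi'(r)|=o(r^{-s-1})$ at large $r$ when $3<s\le4$. Use $\min(\epsilon^2r^2,\epsilon r)$ instead. This gives a far-field contribution to $\epsilon^{-1}R_\epsilon$ of order $\epsilon^{\min(s-3,1)}$, up to a logarithm at $s=4$.

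Second, take $\nu(\eta)=\rho e$ with $e$ the specific total energy, as in the Gibbs parametrization, and current $(\rho e+P_{\mathrm{eq}})u$. Using only (1.13a) then gives $\rho(\partial_t e+u\cdot\nabla e)=-\nabla\cdot(P_{\mathrm{eq}}u)$, which is not (1.13c). Equation (1.13c) is the law for the internal energy density $\rho\varepsilon$. To reach it you must also subtract the kinetic-energy identity obtained from (1.13b). This is the computation done in Section~\ref{sectioneuler} in passing from \eqref{euler-cons} to \eqref{euler-adv}.
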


However, the assumption that such a local equilibrium distribution exists is strong and hard to prove. From this perspective, Morrey’s program may be viewed as an attempt to construct a family of locally equilibrated states. In this paper, the measure $\mu^\epsilon$ is defined in equation \eqref{3.3} up to a combinatorial factor. From the point of view of the definition \ref{def:localequi}, $(ii)$ is partially proved in Section \eqref{Section:limit} and $(i)$ requires the ergodic assumption and will be discussed in Section \ref{sec:ergodic}

\subsection{What's the difference between Morrey's program and the results based on Boltzmann-Grad Limit}
It is worth contrasting the \emph{equation of state} obtained this way with
the one that arises in the classical analytic program around Hilbert's sixth
problem. In the kinetic route to hydrodynamics --- deriving the compressible
Euler (or Navier--Stokes--Fourier) equations from Newtonian dynamics through
Boltzmann's equation \cite{BardosGolseLevermore1991, GallagherSaintRaymondTexier2014,
DengHaniMa2025} --- one works in the dilute, Boltzmann--Grad regime of a
hard-sphere (or short-range) gas. The resulting fluid is therefore an
\emph{ideal monatomic gas}, with the rigidly prescribed pressure law
\[
P_{\mathrm{eq}} = (\gamma-1)\,\rho\, e_{\mathrm{int}},
\qquad \gamma = \tfrac{5}{3}\ \text{fixed},
\]
where $e_{\mathrm{int}}$ is the specific internal energy and the adiabatic
exponent $\gamma$ (the ratio of specific heats) is a \emph{fixed} constant
determined by the kinetic model, with no freedom left in the closure. This
restriction to the ideal-gas equation of state is intrinsic to the dilute
limit: it is precisely what confines those derivations to gases rather than
to liquids or dense, high-pressure fluids \cite{GallagherSaintRaymondTexier2014,
DengHaniMa2025}.

Morrey's construction is, by contrast, more flexible on exactly this point.
Because the closure is extracted from the limiting particle distribution
itself rather than from a dilute-gas collision kernel, the resulting equation
of state is governed by certain normalization function $C(\rho,b)$ of the
equilibrium family in Theorem \ref{Thm:limit} and is \emph{not} forced into the single-parameter
polytropic form $P_{\mathrm{eq}}=(\gamma-1)\rho\, e_{\mathrm{int}}$ with a
fixed $\gamma$. Instead the pressure is determined by the interaction
potential $\Phi$ through $C$, and reduces to an explicit thermodynamic
relation only after the limiting correlations are computed. We return to this
point in Section~\ref{sectioneuler}, where the equation of state is written
directly in terms of $C(\rho,b)$ and is seen to carry genuine thermodynamic
information rather than a prescribed exponent $\gamma$.

\textbf{In conclusion}, a significant difference between the classical program  \cite{BardosGolseLevermore1991, GallagherSaintRaymondTexier2014,
DengHaniMa2025} and Morrey's paper \cite{Morrey1955} is that: Instead of the Boltzmann-Grad limit which is $N\epsilon^{d-1}=O(1)$, Morrey employed the scaling law \eqref{1.1} $N\epsilon^3=O(1)$. It stands for the non-dilute, liquid state. In fact, given the short range assumption for the potential $\Phi$ \eqref{1.2}, the extra $\epsilon$ erases the influence of the time derivative $\frac{\partial \phi_{Nl}}{\partial t}$ in \eqref{2.5}, which leads to the equilibrium Gibbs distribution, instead of Boltzmann distribution. And such a regime leads to a more flexible choice of pressure $P_{eq}$.

\subsection{Structure of the paper}
First, in Section \ref{section2}, Morrey derived the $N$-particle balance laws by assuming the existence of an $N$-particle distribution $\phi_{NN}$. This is a standard process.

Then, in Section \ref{section3} and \ref{Section:phiNL}, Morrey constructed a concrete sequence of measures $\phi_{Nl}$ corresponding to certain $(\rho,u,e)$. The idea is to select the $N$-particle systems that have a macro-state similar to $(\rho,u,e)$. In particular, we split the $x$-space $\mathbb{R}^3$ into small cells $R_N$ and want the mass, momentum, and energy of an $N$-particle system to have the same values given by $(\rho,u,e)$. For example, in each cell $R_N$, we require
\[
m_N\#\{x_i\in R_N\}=\int_{R_N}\rho(t,x)\,dx.
\]
Such restrictions give us a sub-manifold $M_N$ in the phase space $\mathbb{R}^{6N}$ and an invariant probability measure on it induced by the Lebesgue measure, which is our $\phi_{NN}$.

After that, in Section \ref{Section:limit}, Morrey tries to prove that such $\phi_{Nl}$ given by the BBGKY hierarchy eventually converge to a Gibbs measure $\phi_l$ in \eqref{6.1}, which actually stands for the convergence in \eqref{1.11}. Formally, if this convergence is strong enough and if the ergodic-type assumption in Assumption \ref{assump:ergodic} is valid, then the balance laws from Section \ref{section2} should pass to the limit and yield the Euler equations.

However, in Section \ref{section3}, we will question whether $\phi_{Nl}$ is indeed invariant along the Newton flow \eqref{1.3}. This is because the particle flows are not exactly moving along the restricted submanifold $M_N$, and $\phi_{Nl}$ is designed to be invariant under the flow projected on $M_N$, not under the Newton flow itself. Thus, the equations derived in Section \ref{section2} will only hold up to some errors. Fortunately, we will discuss in Section \ref{Fourier} that the Newton flow orthogonal to $M_N$ is significantly smaller than the flow along $M_N$, which means that $\phi_{Nl}$ is still almost flow-invariant. Hence, it is still plausible to expect that such $\phi_{Nl}$ will serve as the local equilibrium distribution.

Based on the above discussion, there are still the ergodic assumption \ref{assump:ergodic} and three conjectures \ref{conj:pi}, \ref{conj:cell}, and \ref{conj:deri} which remain unsolved. This is the main gap in Morrey's derivation. By assuming them, we will obtain the hydrodynamic equations in Section \ref{sectioneuler}. 

Through out the paper, one can see that:  The first part of the paper, Section \ref{section2}, is standard and rigorous. The most important and non-rigorous part is the construction of the marginals and their limit: Section \ref{section3}-\ref{Section:limit}.

\section{Particle Distribution  and balance laws}\label{section2}
This section derives the formal balance laws associated with an invariant N-particle phase-space distribution $\pi_{NN}$. These equations already resemble hydrodynamics, but they are not closed, since they still depend on higher-order marginals.

We proceed in two steps. First, Liouville's equation for the full $N$-particle density is integrated to obtain the BBGKY hierarchy for the reduced marginals, which was done in the setup section. Second, the marginals are renormalized so that mass, momentum, and energy in finite cells remain of order one under Morrey's scaling. Third, we pass to local coordinates adapted to the interaction scale $\varepsilon$, which isolates the short-range structure of the hierarchy. This is a standard and rigorous derivation.

\subsection{Renormalization and local variables}
To pass to the large-$N$ limit while keeping finite-cell quantities well-defined, we normalize and define the mass distribution
\[\label{2.1}
p_{Nl}(t;x_1,...,x_l;u_1,...,u_l) 
= \frac{m_N^l N!}{ (N-l)!}\,\pi_{Nl}(t;x_1,...,x_l;u_1,...,u_l).
\tag{2.1}
\]

Combining this with \eqref{1.6}, we will have
\begin{equation*}\label{2.2}
p_{Nl,t}
+
\sum_{j=1}^l
\big(
u_j^\alpha\,p_{Nl,x_j^\alpha}
-
H_{Nl,x_j^\alpha}\,p_{Nl,u_j^\alpha}
\big)
=
-\frac{K}{D\epsilon^4}
\int_{-\infty}^{\infty}
\sum_{j=1}^l
\Phi'\!\big(|x_j-x_{l+1}|/\epsilon\big)\,
\frac{x_{l+1}^\alpha-x_j^\alpha}{|x_{l+1}-x_j|}
\;
p_{N,l+1}\,dx_{l+1}\,du_{l+1}.
\tag{2.2}
\end{equation*}
\textbf{Remark:} The $1/\epsilon^4=1/m_N\epsilon$ enters from differentiating $\Phi_N(r/\epsilon)$ get one $1/\epsilon$ and $p_{N,l+1}$ eats a $m_N$; this is the rescaled force.

To exploit the short-range scaling, we introduce relative variables
\begin{equation*}\tag{2.3}\label{2.3}
x=x_1,\quad u=u_1,\qquad
\xi_j=\frac{x_{j+1}-x}{\epsilon},\quad
v_j=u_{j+1}-u,\qquad j=1,\dots,\,l-1,
\end{equation*}
and define functions $\phi_{N1},\dots,\phi_{Nl}$ by
\[\tag{2.4}\label{2.4}
\phi_{Nl}(t;\,x,u;\,\xi_1,v_1;\,\dots;\,\xi_{l-1},v_{l-1})
=
p_{Nl}\big(t;\,x_1,u_1;\,x+\epsilon\xi_1,u+v_1;\,\dots;\,x+\epsilon\xi_{l-1},u+v_{l-1}\big).
\]
This centers the first particle at $(x,u)$ and expresses others by $O(\epsilon)$ offsets $(\xi,v)$, isolating the local interaction scale.
$\xi_j=(x_{j+1}-x_1)/\epsilon$

\textbf{Remark:} We introduced three particle distributions: $\pi_{Nl}$ is the raw marginal density, $p_{Nl}$ is normalized mass density and $\phi_{Nl}$ is the rescaled local coordinate version.

With the change of variables \eqref{2.4}  and after multiplying \eqref{2.2} by $\epsilon$, we obtain the system
\begin{equation*}\label{2.5}
\begin{aligned}
\epsilon\big(\phi_{Nl,t}+u^\alpha \phi_{Nl,x^\alpha}\big)
\;&+\; B_{l0}^\alpha(\xi)\,\phi_{Nl,u^\alpha}
\;+\; \sum_{i=1}^{\,l-1}\big[\,u^\alpha \phi_{Nl,\xi_i^\alpha}+B_{li}^\alpha(\xi)\,\phi_{Nl,v_i^\alpha}\big]
\end{aligned}
\tag{2.5}
\end{equation*}
\begin{equation*}
    =
-\frac{K}{D}\int_{-\infty}^{\infty}
\bigg\{
\Phi_N'\big(|\xi_1|\big)\,\frac{\xi_1^\alpha}{|\xi_1|}
\;\phi_{N,l+1,\;u^\alpha}
\;+\;
\sum_{i=1}^{\,l-1}
\bigg[
\Phi_N'\big(|\xi_i-\xi_1|\big)\,\frac{\xi_i^\alpha-\xi_1^\alpha}{|\xi_i-\xi_1|}
-
\Phi_N'\big(|\xi_1|\big)\,\frac{\xi_1^\alpha}{|\xi_1|}
\bigg]\,
\phi_{N,l+1,\;v_i^\alpha}
\bigg\}\,d\xi_1\,dv_1,
\end{equation*}
with coefficients
\begin{equation*}
B_{l0}^\alpha(\xi)=K\sum_{j=1}^{\,l-1}\Phi_N'\big(|\xi_j|\big)\,\frac{\xi_j^\alpha}{|\xi_j|},
\qquad
B_{li}^\alpha(\xi)
=-K\left[\Phi_N'\big(|\xi_i|\big)\,\frac{\xi_i^\alpha}{|\xi_i|}
+\sum_{k=1}^{\,l-1}\Phi_N'\big(|\xi_i-\xi_k|\big)\,\frac{\xi_i^\alpha-\xi_k^\alpha}{|\xi_i-\xi_k|}\right].
\end{equation*}

Moreover,
\[\tag{2.6}\label{2.6}
B_{l0}^\alpha+\sum_{i=1}^{\,l-1}B_{li}^\alpha=0,
\qquad
B_{l0}^\alpha+B_{li}^\alpha=-\mathcal{X}_{l,\xi_i^\alpha},
\qquad
\mathcal{X}_l=\frac{K}{2}\sum_{j,k=0,j\neq k}^{\,l-1}\Phi\big(|\xi_j-\xi_k|\big),
\quad (\xi_0=0).
\]
In these variables, the distinguished particle sits at $(x,u)$, while the remaining particles are described relative to it on the interaction scale $\epsilon$. The macroscopic variables t and x now appear only through slow variations, while the fast interaction terms are encoded in the $(\xi,v)$ variables.

\textbf{Remark:} These identities express momentum conservation among relative coordinates; $\mathcal{X}_l$ is the interaction energy in the relative frame. To check the computation, notice:
\begin{equation*}
    \frac{\partial \phi_{N,N}}{\partial x}=\frac{\partial p_{NN}}{\partial x_1}+...+\frac{\partial p_{NN}}{\partial x_N}\;\;\text{and}\;\; \frac{\partial \phi_{N,N}}{\partial \xi_j}=\epsilon\frac{\partial p_{NN}}{\partial x_{j+1}}
\end{equation*}

\medskip
From the definitions above, the cell-wise densities take the continuous forms
\begin{equation*}\label{2.7}
\begin{aligned}
\rho_N(t;x) \;&=\; \int_{-\infty}^{\infty}\phi_{N1}(t;x,u)\,du,\\
\rho_N(t;x)\,\bar{u}_N^\alpha(t;x) \;&=\; \int_{-\infty}^{\infty}u^\alpha\,\phi_{N1}(t;x,u)\,du,\\
\rho_N(t;x)[\frac{1}{2}\|\bar{u}_N(t;x)\|^2+\varepsilon_N(t;x)]:&=e_N(t;x) \;&
\\&=\; \frac12\int_{-\infty}^{\infty}|u|^2\,\phi_{N1}(t;x,u)\,du
\;+\;\frac{K}{2D}\int_{-\infty}^{\infty}\Phi\big(|\xi_1|\big)\,
\phi_{N2}(t;x,u;\xi_1,v_1)\,du\,d\xi_1\,dv_1.
\end{aligned}
\tag{2.7}
\end{equation*}
\textbf{Remark :} In this paper $e$ stands for \emph{total energy} and $\varepsilon$ stands for internal energy.
\subsection{Balance equations}
The equation \eqref{2.5} for $l=1$ can be written as
\begin{equation*}\label{2.8}
\phi_{N1,t}+u^\alpha\phi_{N1,x^\alpha} = -\,f_{N1,u^\alpha}^\alpha,
\tag{2.8}
\end{equation*}
with\[
f_{N1}^\alpha(t;x,u)
:=
\frac{K}{D\epsilon}
\int_{-\infty}^{\infty}
\Phi'\big(|\xi_1|\big)\,\frac{\xi_1^\alpha}{|\xi_1|}
\;\phi_{N2}(t;x,u;\xi_1,v_1)\,d\xi_1\,dv_1.
\]
Integrating \eqref{2.8} with respect to $u$ gives \emph{continuity}:
\begin{equation*}\label{2.9}
\frac{\partial \rho_N(t;x)}{\partial t}+\frac{\partial}{\partial x^\alpha}\!\big[\rho_N(t;x)\,\bar{u}_N^\alpha(t;x)\big]=0.
\tag{2.9}
\end{equation*}

\textbf{Remark:} \eqref{2.8} plays an important role in the following derivation. It is force density in space.  After integration, RHS vanishes by parts (decay at $|u|\to\infty$). It is reasonable since we want to avoid having too many fast particles. LHS= mass density change in time+ momentum density change in space=0 are conserved.

Multiplying \eqref{2.8} by $u^\beta$, integrating in $u$, and integrating by parts on the right (assuming $u^\beta f_{N1}^\alpha\!\to 0$ as $|u|\to\infty$), we obtain the \emph{momentum equation}
\begin{equation*}\label{2.10}
\frac{\partial}{\partial t}\big(\rho_N\bar u_N^\beta\big)
+\frac{\partial}{\partial x^\alpha}
\big(\rho_N\bar u_N^\alpha \bar u_N^\beta+\psi_{N1}^{\alpha\beta}\big)
=F_{N1}^\beta(t;x),
\tag{2.10}
\end{equation*}
with
\[
\psi_{N1}^{\alpha\beta}(t;x):=\int_{-\infty}^{\infty}(u^\alpha-\bar u_N^\alpha)(u^\beta-\bar u_N^\beta)\,\phi_{N1}(t;x,u)\,du,
\]
and force density in space
\[
F_{N1}^\beta(t;x)=\int_{-\infty}^{\infty} f_{N1}^\beta(t;x,u)\,du
=
\frac{K}{D\,\epsilon}
\int_{-\infty}^{\infty}\Phi'\big(|\xi_1|\big)\,\frac{\xi_1^\beta}{|\xi_1|}
\;\phi_{N2}(t;x,u;\xi_1,v_1)\,du\,d\xi_1\,dv_1.
\]
\textbf{Remark:} Integration by parts in $u$ gives $-\!\int u^\beta \partial_{u^\alpha}f^\alpha\,du=\int f^\beta\,du$ via $\partial_{u^\alpha}u^\beta=\delta^{\beta\alpha}$.

Multiplying \eqref{2.8} by $\tfrac12|u|^2$ and arguing similarly yields the kinetic-energy balance
\begin{equation*}\label{2.11}
\frac{\partial}{\partial t}\!\left[\tfrac12\int_{-\infty}^{\infty}|u|^2\,\phi_{N1}\,du\right]
+
\frac{\partial}{\partial x^\alpha}\!\left[\tfrac12\int_{-\infty}^{\infty}u^\alpha |u|^2\,\phi_{N1}\,du\right]
=
\frac{K}{D\,\epsilon}
\int_{-\infty}^{\infty}
\Phi'\big(|\xi_1|\big)\,\frac{\xi_1^\alpha}{|\xi_1|}\,
u^\alpha \,\phi_{N2}(t;x,u;\xi_1,v_1)\,du\,d\xi_1\,dv_1.
\tag{2.11}
\end{equation*}

From the symmetry of $p_{N2}$ under particle exchange,
\begin{equation*}\label{2.12}
\begin{aligned}
\phi_{N2}(t;x,u;\xi_1,v_1)
&=p_{N2}(t;x,u;\,x+\epsilon\xi_1,u+v_1)
= p_{N2}(t;x+\epsilon\xi_1,u+v_1;\,x,u)
\\
&=\phi_{N2}(t;x+\epsilon\xi_1,u+v_1;\,-\xi_1,-v_1)
=\phi_{N2}(t;x-\epsilon\xi_1',u';\,\xi'_1,v'_1),
\end{aligned}
\tag{2.12}
\end{equation*}
where $\xi'_1=-\xi_1$, $v'_1=-v_1$, and $u'=u+v_1$.
Consequently,
\begin{equation*}\label{2.13}
\big(u^\beta+\tfrac12 v_1^\beta\big)\,\phi_{N2}(t;x,u;\xi_1,v_1)
=
\big(u^\beta+\tfrac12 v_1^{\prime\beta}\big)\,\phi_{N2}(t;x-\epsilon\xi_1',u';\,\xi'_1,v'_1).
\tag{2.13}
\end{equation*}
In particular, we may write
\begin{equation*}\label{2.14}
\phi_{N2}(t;x,u;\xi_1,v_1)
=
\tfrac12\,\phi_{N2}(t;x,u;\xi_1,v_1)
+\tfrac12\,\phi_{N2}(t;x-\epsilon\xi_1',u';\,\xi'_1,v'_1).
\tag{2.14}
\end{equation*}
Using \eqref{2.14} in the definition of $F_{N1}$\eqref{2.10} gives the difference–quotient form
\begin{equation*}\label{2.15}
F_{N1}^\beta(t;x)
=
\frac{K}{2D}
\int_{-\infty}^{\infty}
\Phi'\big(|\xi_1|\big)\,\frac{\xi_1^\beta}{|\xi_1|}
\left[
\frac{\phi_{N2}(t;x,u;\xi_1,v_1)-\phi_{N2}(t;x-\epsilon\xi_1,u';\,\xi_1,v_1)}{\epsilon}
\right]\,du\,d\xi_1\,dv_1.
\tag{2.15}
\end{equation*}
This stresses that the force is governed by a derivative related to the space ($x$) of the 2-distribution.

\textbf{\textbf{Remark:}} In \eqref{2.14}, the plus become minus in \eqref{2.15} since the force get reversed! Let\[
K^\beta(\xi_1) := \Phi'(|\xi_1|)\frac{\xi_1^\beta}{|\xi_1|},
\qquad K^\beta(-\xi_1) = -K^\beta(\xi_1).
\]
\textbf{symmetricity}  
 $(\xi_1,v_1)\mapsto(-\xi_1,-v_1)$ 
\begin{equation*}
\begin{aligned}
F_{N1}^\beta(t;x)
&=\frac{K}{2D\,\epsilon}\int \Big[
K^\beta(\xi_1)\,\phi_{N2}(t,x,u;\xi_1,v_1)
+K^\beta(-\xi_1)\,\phi_{N2}(t,x,u;-\xi_1,-v_1)
\Big]\,du\,d\xi_1\,dv_1.
\end{aligned}
\end{equation*}
Then use \eqref{2.12} for $\phi_{N2}(t,x,u;-\xi_1,-v_1)$ \[
\phi_{N2}(t,x,u;-\xi_1,-v_1)=\phi_{N2}(t,x-\epsilon\xi_1,u-v_1;\xi_1,v_1)
\]
Then we get our \eqref{2.15} where now $u'=u-v_1$.

The equation of system \eqref{2.5} that involves $l=2$ can be written schematically as
\begin{equation*}\label{2.16}
\phi_{N2,t}+u^\alpha\phi_{N2,x^\alpha}
+\epsilon^{-1}\Big[
K\Phi'\big(|\xi_1|\big)\frac{\xi_1^\alpha}{|\xi_1|}\,\phi_{N2,u^\alpha}
+v_1^\beta\,\phi_{N2,v_1^\beta}
\Big]
-2K\Phi'\big(|\xi_1|\big)\frac{\xi_1^\alpha}{|\xi_1|}\,\phi_{N2,u^\alpha}
=
F_{N20}-F_{N20}' + F_{N21},
\tag{2.16}
\end{equation*}

\textbf{Remark:} $F_{N20},F_{N20}',F_{N21}$ denote the force(or energy) density terms arising from the three-particle distribution $\phi_{N3}$ in equation \eqref{2.5}. The equation $2$ -particle couples to the $3$-particle density; the symbols $F_{N20},F_{N21}$ group these contributions. However, the following integration approach removed $\phi_{N3}$.

Multiplying \eqref{2.16} by $\dfrac{K}{2D}\Phi\big(|\xi_1|\big)$, integrating with respect to $u,\xi_1,v_1$, and integrating by parts to eliminate derivatives of $\phi_{N2}$ with respect to these variables, we obtain
\begin{equation*}\label{2.17}
\frac{\partial}{\partial t}
\left[
\frac{K}{2D}\int_{-\infty}^{\infty}\Phi\big(|\xi_1|\big)\,\phi_{N2}\,du\,d\xi_1\,dv_1
\right]
+
\frac{\partial}{\partial x^\beta}
\left[
\frac{K}{2D}\int_{-\infty}^{\infty}u^\beta \Phi\big(|\xi_1|\big)\,\phi_{N2}\,du\,d\xi_1\,dv_1
\right]
\tag{2.17}
\end{equation*}
\[
=
\frac{K}{D}
\int_{-\infty}^{\infty}
\Phi'\big(|\xi_1|\big)\,\frac{\xi_1^\beta}{|\xi_1|}
\left(\tfrac12 v_1^\beta\right)
\phi_{N2}\,du\,d\xi_1\,dv_1.
\]

\textbf{Remark:} The $\phi_{N3}$ terms vanish after integration since we can use Fubini and $\int_R\phi_{N3u}du=\phi_{N_3}|_{-\infty}^{+\infty}=0$ and $\phi_{Nl}$ vanishes at infinity ; only $\phi_{N2}$ remains in the averaged potential-energy balance.

Adding \eqref{2.11} and \eqref{2.17}, and using the definition of $e_N$ in \eqref{2.7} together with \eqref{2.14} and \eqref{2.13}, we obtain the energy equation
\begin{equation*}\label{2.18}
\begin{aligned}
&e_{N,t}(t;x)
+\frac{\partial}{\partial x^\beta}\big(\bar u_N^\beta e_N\big)
+\chi_{N2,\beta}^\beta(t;x)
\\&=
\frac{K}{2D}
\int_{-\infty}^{\infty}
\Phi'\big(|\xi_1|\big)\,\frac{\xi_1^\beta}{|\xi_1|}
\left(u^\beta+\tfrac12 v_1^\beta\right)
\cdot
\left[
\frac{\phi_{N2}(t;x,u;\xi_1,v_1)-\phi_{N2}(t;x-\epsilon\xi_1,u';\,\xi_1,v_1)}{\epsilon}
\right]\,du\,d\xi_1\,dv_1,
\end{aligned}
\tag{2.18}
\end{equation*}
where
\[
\chi_N^\beta(t;x)
=
\frac{1}{2}\int_{-\infty}^{\infty}(u^\beta-\bar u_N^\beta)
\left[
 |u|^2\,\phi_{N1}(t;x,u)
+\frac{K}{D}\int_{-\infty}^{\infty}\Phi\big(|\xi_1|\big)\,\phi_{N2}(t;x,u;\xi_1,v_1)\,d\xi_1\,dv_1
\right]du.
\]
Equation \eqref{2.18} is the formal total-energy balance. The left-hand side consists of the time derivative of the energy density, the transport of energy by the mean velocity, and the fluctuation flux $\chi_N^\beta$. The right-hand side is the symmetrized interaction work, written in a spatial difference-quotient form analogous to the force term in \eqref{2.15}.

\subsection{Summary of the formal hydrodynamic system}
In conclusion, equations \eqref{2.9}, 
\begin{equation*}
\rho_{N1,t}(t;x)+\frac{\partial}{\partial x^\alpha}\!\big[\rho_N(t;x)\,\bar{u}_N^\alpha(t;x)\big]=0.
\end{equation*} (space continuity)

\eqref{2.10} with $F^\alpha_{N1}$ given by \eqref{2.15} 
\begin{equation*}
    \frac{\partial}{\partial t}\big(\rho_N\bar u_N^\beta\big)
+\frac{\partial}{\partial x^\alpha}
\big(\rho_N\bar u_N^\alpha \bar u_N^\beta+\psi_{N1}^{\alpha\beta}\big)
=F_{N1}^\beta(t;x),
\end{equation*} (How momentum change in space and time)
and \eqref{2.18} (energy) forms a partial set of generalized hydro equations.

Here is what has been achieved: mass continuity is exact at the formal level; equations of momentum and energy are not closed because they still involve $\phi_{N2}$, which was obtained from BBGKY hierarchy. If we can obtain a concrete form of $\phi_2=\lim \phi_{N2}$ and take the limit of these balance laws properly, we will have the hydrodynamic equation. The next sections are devoted to Morrey’s attempt to construct such $\phi_{Nl}$.

\section{The Invariant Measure and an Ergodic-Type Assumption}\label{section3}

The formal balance laws of Section \ref{section2} were derived from invariant $N$-particle distributions $\phi_{Nl}$. Morrey's next idea is to replace such invariant distributions by a family of phase-space measures adapted to given macroscopic fields $(\rho(t;x),u(t;x),e(t;x))$. This is done by imposing cell-wise constraints on mass, momentum, and energy, thereby defining a constrained set of microscopic states. The remaining step is then to assume that averages over this constrained family correctly describe the macroscopic evolution. Section \ref{section3} explains this construction and isolates the corresponding ergodic-type hypothesis.

To this end, we start by considering the macroscopic state on small cells of phase space and use it to approximate the real $(\rho,u,e)$. Being specific, for each such cell $R$ of $x$-space and each $P_N=(x_1,..,x_N;u_1,...,u_N)$ in phase space $\mathbb{R}^{6N}$, we can define
\begin{align*}\label{3.0}
D_N^1(R)(P_N) &:= m_N \#\{x_i\in R\}, 
\\
D_N^{1+\alpha}(R)(P_N) &:= m_N \sum_{x_i\in R} u_i^\alpha(t), \qquad \alpha = 1,2,3, 
\\
D_N^5(R)(P_N) &:=\frac{m_N}{2} \sum_{x_i\in R}\Big[ |u_i|^2 + K\sum_{k=1}^N \Phi(\frac{|x_i - x_k|}{\epsilon}) \Big]
\end{align*}

The following assumption allows us to separate the whole spatial space $\mathbb{R}^3$ into small cells, and then, we can approximate the macroscopic states via these cells.

\begin{assumption}\label{assump:R_N}
For each $N$ large enough, we can select a finite family of disjoint spatial cells
\[
R_{N,1},\dots,R_{N,\nu_N},
\]
covering all but a negligible portion of the particles.  
Each cell $\rightarrow0$ but so slowly that as $N\to\infty$,  $\forall p\in\{1,2,...,\nu_N\}$
\[
\nu_N \to \infty,\; \operatorname{diam}(R_{N,p})\to 0,\;\text{and}\;\; \frac{\epsilon^3}{\mathrm{vol}(R_{N,p})} \to 0 \qquad,
\]
which means each cell can contain many particles. 

What's more, whenever $\{r_N\}$ is a sequence in $\{R_{N,p}\}$ chosen that $r_N$ close down to one point $x_0$, we assume, for each sequence of proper $\{P_N\}$,the following limits exist:  
\begin{equation*}
\lim_{N\rightarrow \infty}\frac{D^1_N(r_{N,p})(P_N)}{\operatorname{vol}(r_{N,p})}
=\rho(t;x_0),
\qquad
\lim_{N\rightarrow \infty}\frac{D_N^{1+\alpha}(r_N)(P_N)}{D_N^1(r_N)(P_N)}
=u^\alpha(t;x_0),
\end{equation*}
and
\[
\lim_{N\rightarrow \infty}\frac{D^5_N(r_p)(P_N)}{D^1_N(r_p)(P_N)}
=e(t;x_0).
\]

\end{assumption} 

For example, we can require $\{R_{N,p}\}$ as the lattice cells covering the ball $B(0,N^\frac{1}{9})$ and each lattice has diameter $1/N^\frac{1}{9}$. We need at most $O(N^{\frac{2}{3}})$ cells to achieve this.

\subsection{The Constraint Manifolds $M_N$ and The Induced Phase-Space Measure}

For each fixed time $t_1$, one assume the fluid has given macro-state $(\rho,u,e)$. We want our $N$-particle distribution meets such macro-state at least on each small cell. To be specific, $\forall R_{N,p}$, we require
\begin{align}  
    D_N^1(R_{N,p})(P_N) &= \int_{R_{N,p}}\rho(t_1,x)dx,\label{3.1a}\tag{3.1a}
\\
D_N^{1+\alpha}(R_{N,p})(P_N) &=  \int_{R_{N,p}}\bar{u}^\alpha(t_1,x)\,\rho(t_1,x)dx, \qquad \alpha = 1,2,3, \label{3.1b}\tag{3.1b}
\\
D_N^5(R_{N,p})(P_N) &= \int_{R_{N,p}}e(t_1,x)\rho(t_1,x)dx
\label{3.1c}\tag{3.1c}
\end{align}

If we see them as restrictions, we can define the \emph{constraint manifold}
\[
M_N(t_1):=M_N((\rho(t_1,\cdot),\bar{u}(t_1,\cdot),e(t_1,\cdot))
=
\Big\{ P_N\;\; \text{satisfying the restrictions \eqref{3.1a}, \eqref{3.1b} and \eqref{3.1c}}\Big\}.
\tag{3.2}\label{3.2}
\]
Thus $M_N(t_1)$ is a sub-manifold of $\mathbb{R}^{6N}$ consisting of all \textbf{micro-states} $P_N$ whose \textbf{macro-state}:
cell-averaged mass, momentum, and energy agree with the prescribed macroscopic fields at time~$t_1$.

To perform averages over $M_N(t_1)$, we project the Liouville measure of the full phase space $\mathbb{R}^{6N}$ onto $M_N(t_1)$ via the Jacobian arising from the constraint equations.

This step is totally geometric. The Liouville measure on the whole space is the canonical Lebesgue measure $dx_1du_1...dx_Ndu_N$. We have
$5\nu_N$ restrictions in total. And since $\nu_N << N$, such many restrictions give us a sub-manifold with big dimensions. This induce a measure, denoted as $\mu_{M_N}$ in $M_N(t_1)$, given by
\[\tag{3.3}\label{3.3}
\mu_{M_N}(S)=L\cdot\int_E |\frac{\partial (f_1,..,f_{5\nu_N})}{\partial(x_1,..,x_{5\nu_N})}|^{-1}dx_{5\nu_N+1}...dx_{6N}\,,
\]
where $f_j$ are the implicit restrictions and $E$ is the projection of $S$ on $(x_{5\nu_N}+1,...,x_{6N})$

The total measure is written
\[
\mu_{M_N}\big(M_N(t_1)\big)
=
\int_{M_N(t_1)} d\mu_{M_N}.
\]
Then, we can introduce the $l$--particle marginal density.

\textbf{definition}[Finite--$l$-Particle Marginal]
Define the $l$-density
\[
\pi_{N l}(t_1; x_1,u_1;\dots;x_l,u_l)
=\frac{\operatorname{Proj}_ld\mu_{M_N(t_1; x_1,u_1;\dots;x_l,u_l)}(x_1,u_1;\dots;x_l,u_l)}{\mu_{M_N}\big(M_N(t_1)\big)},
\tag{3.4}\label{3.4}
\]

However, as we discussed earlier, there is no evidence that such $\mu_{M_N}$ is invariant under the Newton flow \eqref{1.3}. The marginals defined from $\mu_{M_N}$
are therefore not identical to the invariant distributions used in Section \ref{section2}. In what follows, we treat them as approximate substitutes, under the heuristic assumption that the discrepancy becomes negligible in the hydrodynamic limit. This is one of the central unproved steps in the argument.

\subsection{An ergodic-type assumption.}
Then, Morrey introduced certain ergodic-type assumption to ensure the equations derived in Section \ref{section2} can have limits.

Now, for each $P\in M_N(t_1)$, we can treat $D^*_N(P)$ as discrete measures on $\mathbb{R}^3$ which has point mass on each $x_j$. And we hope the space average w.r.t $\mu_{M_N}$ of the changing rate of each $D^*_N(P)$ along its trajectory $P(t)$ will eventually converges to the time derivatives of $(\rho,u,e)$. This averaging process requires that the particle motion must experience every phase state $P$ in $M_N$ in a significant short time. We will try to explain it in Section\ref{Fourier}.  

\subsubsection{Fourier transform}
To formally describe this average, Morrey considered the Fourier Transform for $x\in\mathbb{R}^3$. We introduce Fourier transforms not because they are the final macroscopic variables, but because they provide a tractable way to differentiate the discrete observables in time. For example, it's difficult to tell what's the time derivative of $m_N\#\{x_j(t)\in R_N\}$ really is, even given the dynamics of $(x;u)$.

\begin{equation}
\begin{aligned}
F(\rho)(t;y)&=\int_{\mathbb{R}^3}e^{ixy}\rho(t,x)\,dx =\int_{\mathbb{R}^3\times\mathbb{R}^3}e^{ixy}\phi_1(t,x,u)\,dxdu\\
F(u^\alpha\cdot\rho)(t;y)& =\int_{\mathbb{R}^3\times\mathbb{R}^3}u^\alpha\,e^{ixy}\phi_1(t,x,u)\,dxdu\\
F(e)(t;y)&=\int_{\mathbb{R}^3}e^{ixy}e(t,x)\,dx\\& =\frac12\int_{\mathbb{R}^3}e^{ixy}[\int_{\mathbb{R}^3}|u|^2\,\phi_{1}(t;x,u)\,du
+K\int_{\mathbb{R}^3\times\mathbb{R}^3\times\mathbb{R}^3}\Phi\big(|\xi_1|\big)\,
\phi_{2}(t;x,u;\xi_1,v_1)\,du\,d\xi_1\,dv_1]\,dx\\
\end{aligned}
\label{3.5}\tag{3.5}
\end{equation}

and

\begin{equation}
\begin{aligned}
F(D^1_N)(t;y)&=m_N\sum_{j=1}^N e^{ix_j(t)\cdot y}\\
F(D^{1+\alpha}_N)(t;y)&=m_N\sum_{j=1}^N u_j^\alpha(t) e^{ix_j(t)\cdot y}\\ 
F(D^{5}_N)(t;y)&=\frac{1}{2}m_N\sum_{j=1}^N e^{ix_j(t)y}\Big(\|u_j(t)\|^2+K\sum_{k\neq j}^N\Phi(\|x_k(t)-x_j(t)\|/\epsilon)\Big).
\end{aligned}
\label{3.6}\tag{3.6}
\end{equation}

Then we take the time derivative of \eqref{3.6} and use the dynamics \eqref{1.3}:
\begin{equation}
\begin{aligned}
 \partial_t[F(D^1_N)(t;y)]&=m_N\sum_{n=1}^N iy\,\dot{x}(t)e^{ix(t)\cdot y}=m_N\sum_{n=1}^N iy\,u(t)e^{ix(t)\cdot y}\\[2pt]
\partial_t[F(D^{1+\alpha}_N)(t;y)] &=\sum_{\beta=1}^3i\,y^\beta\, m_N[\sum_{j=1}^Nu^\alpha_ju_j^\beta e^{iy x_j}+\sum_{j=1}^{N} e^{iyx_j}
 \Big( \sum_{k\neq j}^{N} E\big(i\,y(x_k-x_j)\big)\, v_{Njk}^{\alpha\beta}\Big)],\\[2pt]
\partial_t[F(D^{5}_N)(t;y)] &=\sum_{\alpha,\beta}^3 \Big[i\,y^\beta\, m_N\sum_{j=1}^{N}u_j^\beta e^{iyx_j}\,\frac{1}{2}[\|u_j(t)\|^2+K\sum_{k\neq j}^{N}\Phi(\|x_k(t)-x_j(t)\|/\epsilon)]
 \\& +\ i\,y^\alpha\, m_N\sum_{j=1}^{N}u_j^\alpha e^{iyx_j}
 ( \sum_{k\neq j}^{N} E\big(i\,y_\beta(x_k-x_j)^\beta\big)\, v_{Njk}^{\beta\alpha})\Big],
\end{aligned}
\label{3.7}\tag{3.7}
\end{equation}

where $E(z):=(e^z-1)/z,\,E(0)=1$ and
\[
v_{Njk}^{\alpha\beta}
:=\frac{K}{2}\,(\|x_k-x_j\|/\epsilon)\,\Phi'(\|x_k-x_j\|/\epsilon)\,
\frac{(x_k^\alpha-x_j^\alpha)(x_k^\beta-x_j^\beta)}{\|x_k-x_j\|^2}.
\]

On certain $M_N(t_1)$, the Ergodic theorem says that the time average of the any given good $f$ along trajectory $P_N(t)$ over a sufficient long time($\rightarrow0$ as $N\rightarrow\infty$ in our case) will equal to the space average of over $M_N$ under the measure $\mu_{M_N}$ which is "invariant" along $P_N(t)$. Therefore, we shall assume, for example 
\begin{align*}
    \partial_t[F(\rho)(t;y)]&=\lim_{N\rightarrow\infty}\int_{M_N(t)}\partial_t[F(D^1_N)(t;y)]\,\frac{d\mu_{M_N}(P_N)}{\mu_{M_N}(M_N)}\\
    &=\lim_{N\rightarrow\infty}\int_{M_N(t)}m_N\sum_{n=1}^N iy\,ue^{ix\cdot y}\,\frac{d\mu_{M_N}(P)}{\mu_{M_N}(M_N)}\,.
\end{align*}
This discussion tells us what ergodic assumption we should expect.
 
\begin{assumption}[Ergodic type hypothesis]\label{assump:ergodic}
    For each $(t,y)$, the time derivative of the limiting function $(\rho,\bar{u},e)$ satisfies
\begin{align}\label{3.8}
 \partial_t[F(\rho)(t;y)]&=\lim_{N\rightarrow\infty}\int_{M_N(t)}\partial_t[F(D^1_N)(t;y)]\,\frac{d\mu_{M_N}(P_N)}{\mu_{M_N}(M_N)}
\tag{3.8a}\\[4pt]
\partial_t[F(u^\alpha\cdot\rho)(t;y)]&=\lim_{N\rightarrow\infty}\int_{M_N(t)}\partial_t[F(D^{1+\alpha}_N)(t;y)]\,\frac{d\mu_{M_N}(P_N)}{\mu_{M_N}(M_N)}
\tag{3.8b}\\[4pt]
\partial_t[F(e)(t;y)]&=\lim_{N\rightarrow\infty}\int_{M_N(t)}\partial_t[F(D^5_N)(t;y)]\,\frac{d\mu_{M_N}(P_N)}{\mu_{M_N}(M_N)}\tag{3.8c} 
\end{align}
\end{assumption}

It's clear that RHS is the space average. LHS is the time average when the traversal time $\rightarrow0$ as $N\rightarrow\infty$.

\subsubsection{Explanation of the Ergodic-Type Assumption \ref{assump:ergodic}}
Assumption \ref{assump:ergodic} asserts that, after averaging over the constrained phase-space family and passing to the hydrodynamic limit, the macroscopic time derivatives are determined by the same observables that appear in the microscopic Fourier dynamics. This is indeed much stronger than the classical ergodic-type assumption needed in similar projects.\cite{Khinchin1949}

Fubini's theorem tells us, if we assume the regularity of the integrant,
\begin{align*}
    \int_{M_N(t)}\partial_t[F(D^*_N)(t;y)]\,\frac{d\mu_{M_N}(P_N)}{\mu_{M_N}(M_N)}&=\partial_tF(\int_{M_N(t)}(D^*_N)(t;x)\,\frac{d\mu_{M_N}(P_N)}{\mu_{M_N}(M_N)})(y)\;.
\end{align*}
If one could identify the averaged discrete distributions on the right with the macroscopic fields $(\rho_N,\rho_N\bar{u}_N,e_N)$, then Assumption 3 would amount to a time-average/phase-average replacement principle for these observables.

Then, taking the Fourier inverse in equation (3.8), we will have $\partial_t\rho_N(t,x)\rightarrow\partial_t\rho(t,x)$, $\partial_t\rho_N\bar{u}_N(t,x)\rightarrow\partial_t\rho u(t,x)$ and $\partial_te_N(t,x)\rightarrow\partial_te(t,x)$ weakly. Formally, this would permit one to pass to the weak limit for equations \eqref{2.9},\eqref{2.10} and \eqref{2.18} and obtain fluid equations.

In conclusion, to derive the Euler equation, we have to assume two things. First, we can replace $(\rho_N,\rho_N\bar{u}_N,e_N)(t,x)$ in \eqref{2.9},\eqref{2.10} and \eqref{2.18} with $\int_{M_N(t)}(D^*_N)(t;x)\,\frac{d\mu_{M_N}(P_N)}{\mu_{M_N}(M_N)}$. The difficulty is that the $\mu_{M_N}$ is only invariant along the trajectory of the particle motion projected on $M_N$, while the derivation in Section \ref{section2} requires $d\mu_{M_N}(P_N)$ is invariant along the flow \eqref{1.3} itself. Second, equations  \eqref{2.9},\eqref{2.10} and \eqref{2.18} have limits, which is indicated by the ergodic assumption \ref{assump:ergodic}.

Some efforts are made to try to justify these assumptions in Section \ref{Fourier}. However, some gaps remain to be filled here.

\subsection{Towards the Hydrodynamic Limit}

Assuming that the constrained marginals $\pi_{Nl}$ admit a sufficiently strong Gibbs-type limit, and assuming that Assumption \ref{assump:ergodic} identifies the limiting time derivatives correctly, one ibhs formally led to the hydrodynamic equations obtained from the balance laws of Section \ref{section2}. At this point no rigorous proof of this convergence is available in the draft; the purpose of the subsection is only to indicate the intended limiting mechanism. Moreover, in carrying out this program one finds that the finite--$N$ marginals $\pi_{N l}$ converge to functions of the Gibbs form \eqref{6.1}.

\textbf{Remark :} This assumption $\phi_{Nl}$ converge uniformly to $\phi_l$ is \textbf{not} rigorously proved, see Section \ref{Section:limit}.

\section{Partial justification of the ergodic-type hypothesis}\label{Fourier}

In this section, we will try to partially justify the ergodic assumption \ref{assump:ergodic} and explain why it is plausible to treat $\pi_{NN}$ defined by the equation \eqref{3.3} as local equilibrium measures even though they are not exactly invariant. To this end, let us return to the particle motion and use the bounds (Theorems~\ref{thm:LNbound}--\ref{thm:A.2} in the Appendix), which in turn control moments of the particle system and enable compactness arguments for certain transforms.

\subsection{Discussion of the ergodic assumption \ref{assump:ergodic}}
This is the point where Morrey's own argument shifts from exact balance laws to compactness and averaging heuristics.

Recall that in Section \ref{section3}, we introduced Fourier–Stieltjes transforms of the \(x\)-distributions for discrete measures $D^*_N$: \eqref{3.6} and their time derivatives \eqref{3.7}. For the readers' convenience, let us copy \eqref{3.7} here.
\begin{equation*}
\begin{aligned}
 \partial_t[F(D^1_N)(t;y)]&=m_N\sum_{n=1}^N iy\,\dot{x}(t)e^{ix(t)\cdot y}=m_N\sum_{n=1}^N iy\,u(t)e^{ix(t)\cdot y}\\[2pt]
\partial_t[F(D^{1+\alpha}_N)(t;y)] &=\sum_{\beta=1}^3i\,y^\beta\, m_N[\sum_{j=1}^Nu^\alpha_ju_j^\beta e^{iy x_j}+\sum_{j=1}^{N} e^{iyx_j}
 \Big( \sum_{k\neq j}^{N} E\big(i\,y(x_k-x_j)\big)\, v_{Njk}^{\alpha\beta}\Big)],\\[2pt]
\partial_t[F(D^{5}_N)(t;y)] &=\sum_{\alpha,\beta}^3 \Big[i\,y^\beta\, m_N\sum_{j=1}^{N}u_j^\beta e^{iyx_j}\,\frac{1}{2}[\|u_j(t)\|^2+K\sum_{k=1}^{N}\Phi(\|x_k(t)-x_j(t)\|/\epsilon)]
 \\& +\ i\,y^\alpha\, m_N\sum_{j=1}^{N}u_j^\alpha e^{iyx_j}
 ( \sum_{k\neq j}^{N} E\big(i\,y_\beta(x_k-x_j)^\beta\big)\, v_{Njk}^{\beta\alpha})\Big],
\end{aligned}
\end{equation*}
where $E(z):=(e^z-1)/z,\,E(0)=1$ and
\[
v_{Njk}^{\alpha\beta}
=\frac{K}{2}\,(\|x_k-x_j\|/\epsilon)\,\Phi'(\|x_k-x_j\|/\epsilon)\,
\frac{(x_k^\alpha-x_j^\alpha)(x_k^\beta-x_j^\beta)}{\|x_k-x_j\|^2}.
\]

Then, if we can prove that \eqref{3.7} are uniformly bounded, by Ascoli–Arzelà, a uniformly convergent subsequence of \eqref{3.6} can be drawn. To this end, we will need certain estimations about the summation of $\Phi$, which will be presented in Appendix \ref{sectionAbounds}. This is the analytical hinge for the compactness theorems that follow. However, the boundedness of $\partial_t[F(D^{5}_N)(t;y)]$ remains to be a \emph{gap}. 

\begin{theorem}[4.1]\label{thm:5.3}
Suppose for some \(t_0\) we have a sequence of particle systems with \(N\to\infty\) obeying the scaling law \eqref{1.1} and their total energy $E_N$  and $C_N$ in \eqref{A.8} are bounded in $N$.
Then there exists an infinite subsequence (not relabeled) for which
\(F(D^\gamma_N)(t;y) \to \psi^\gamma\) uniformly on every bounded subset of \((t,y)\)-space
for \(\gamma=1,2,3,4\). Each limit \(\psi^\gamma\) satisfies a uniform Lipschitz condition on bounded sets.
Moreover \(\psi^\gamma\) is continuously differentiable in both \(t\) and \(y^\alpha\) and satisfies the first two identities in \eqref{3.7}.
\end{theorem}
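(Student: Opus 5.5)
The plan is an Arzelà--Ascoli argument on bounded sets of $(t,y)$-space, applied to the four families $F(D^\gamma_N)(t;y)$ for $\gamma=1,\dots,4$.

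\textbf{Step 1: uniform bounds.} By the definitions \eqref{3.6},
\[
|F(D^1_N)|\le m_N N=O(1),
\qquad
|F(D^{1+\alpha}_N)|\le m_N\sum_j|u_j|\le (m_N N)^{1/2}\Big(m_N\sum_j|u_j|^2\Big)^{1/2}.
\]
The kinetic energy is bounded by the total energy $E_N$, since $\Phi$ is bounded below (it is positive near the origin and decays at infinity). Both the energy and the momentum are conserved along \eqref{1.3}, so these bounds hold for all $t$, not only at $t_0$.

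\textbf{Step 2: equicontinuity in $y$.} Differentiating in $y$ brings down a factor $ix_j$. So I need a bound on $m_N\sum_j|x_j(t)|$, or a second moment, uniform for $t$ in bounded intervals. I would get it from $\frac{d}{dt}\,m_N\sum_j|x_j|^2=2m_N\sum_j x_j\cdot u_j$, together with the energy bound and Gronwall. This presumably needs a bounded initial spatial moment at $t_0$, which I would read into the hypotheses or the setup of Appendix \ref{sectionAbounds}. Alternatively, one can use $|e^{ix\cdot y}-e^{ix\cdot y'}|\le\min(2,|x||y-y'|)$ with tightness of the mass distribution.

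\textbf{Step 3: equicontinuity in $t$, the key step.}
\begin{itemize}
\item For $\gamma=1$, the time derivative in \eqref{3.7} is bounded by $|y|\,m_N\sum_j|u_j|$, which is already controlled.
\item For $\gamma=1+\alpha$, the first piece is bounded by $|y|\,m_N\sum_j|u_j|^2\le 2|y|E_N$.
\item The interaction piece uses $|E(iz)|\le1$ for real $z$, so it is bounded by $|y|\,m_N\sum_{j\ne k}|v^{\alpha\beta}_{Njk}|$, which is at most a constant times $|y|\,m_N\sum_{j\ne k}r_{jk}|\Phi'(r_{jk})|$ with $r_{jk}=|x_k-x_j|/\epsilon$.
\end{itemize}
This virial-type sum is exactly what the quantity $C_N$ in \eqref{A.8} and Theorems~\ref{thm:LNbound}--\ref{thm:A.2} should control. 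Assumption~\ref{Phi}(iii) gives $r|\Phi'|\le(\delta+1)\Phi$ near $0$, so the singular part is bounded by the potential energy $\le E_N$. Assumption~\ref{Phi}(ii) makes the tail $r^{s+1}\Phi'\to0$ with $s>3$; combined with the appendix estimate on the number of particles within distance $r\epsilon$ of a given one, this makes the far contributions summable. This is the main obstacle: I would invoke the appendix bound that $m_N\sum_{j\ne k}$ of such kernels is uniformly bounded for bounded $t$, given bounded $E_N$ and $C_N$. The same estimate is not available for $\gamma=5$, because of the extra factor $|u_j|$ there, which is why the theorem stops at $\gamma=4$.

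\textbf{Step 4: conclusion.} Steps 1--3 give uniform Lipschitz bounds on each bounded set of $(t,y)$. Arzelà--Ascoli on an exhausting sequence of compact sets, followed by a diagonal argument, yields a subsequence converging locally uniformly, and the Lipschitz constants pass to the limits $\psi^\gamma$.

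For the differentiability and the first two identities of \eqref{3.7}, I write
\[
F(D^\gamma_N)(t)-F(D^\gamma_N)(t_0)=\int_{t_0}^t\partial_sF(D^\gamma_N)\,ds .
\]
The integrands are themselves Fourier--Stieltjes transforms of the bounded measures $m_N\sum_j u_j\delta_{x_j}$, $m_N\sum_j u^\alpha_ju^\beta_j\delta_{x_j}$ and the interaction stress measures. I would extract a further subsequence along which these converge as well, identifying the limits as the corresponding limit transforms. The limiting integrands are continuous: for $\gamma=1$ the integrand is $iy\cdot\psi^{2..4}$, and for $\gamma=1+\alpha$ it is the limit stress transform. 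So $\psi^\gamma$ is $C^1$ in $t$ and satisfies the limiting identities. Differentiability in $y$ comes from the uniform moment bounds of Step 2, which make $y$-derivatives converge too.
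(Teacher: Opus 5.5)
Your Arzelà--Ascoli skeleton is the same as the paper's proof. It uses sup bounds from \eqref{A.6}, $y$-equicontinuity from the second-moment bound \eqref{A.8} (where $C_N$ is exactly the initial moment \eqref{A.7} you wanted), $|E(iz)|\le 1$, and control of the virial sum through Theorem~\ref{thm:LNbound}. Two justifications need repair.

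In Step~1, ``$\Phi$ is bounded below'' is not enough. Assumption~\ref{Phi} allows an attractive tail, and a pointwise bound $\Phi\ge -m$ summed over $N^2$ pairs only gives $m_N\sum_{i\ne k}\Phi\ge -mN(Nm_N)$, which is unbounded. You need the stability estimate of Lemma~\ref{thm:lower_bound}, $\sum_{i\ne k}\Phi\ge -NL$. It gives $m_N\sum_i|u_i|^2\le 2E_N+LNm_N$, not $2E_N$.

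In Step~3, $C_N$ plays no role and no neighbour counting is needed. Theorem~\ref{thm:LNbound} is just Lemma~\ref{thm:lower_bound} applied to $\Phi_1=(\delta+1)\Phi-r|\Phi'|$, which again satisfies Assumption~\ref{Phi}. It yields $m_N\sum_{j\ne k}r_{jk}|\Phi'(r_{jk})|\le(\delta+1)\,m_N\sum_{j\ne k}\Phi(r_{jk})+L_1Nm_N$. The potential-energy sum on the right is at most $2E_N$ (up to the factor $K$), since kinetic energy is nonnegative.

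The genuine gap is in your last step for $\gamma=2,3,4$. The first identity is fine. Its integrand is $iy^\beta F(D^{1+\beta}_N)$, which already converges locally uniformly, so $\partial_t\psi^1=iy^\beta\psi^{1+\beta}$ is continuous. Likewise $\partial_yF(D^1_N)=i\,m_N\sum_jx_je^{iy\cdot x_j}$ is equicontinuous, by Cauchy--Schwarz on the moments you control.

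However, you cannot ``extract a further subsequence along which'' $\partial_tF(D^{1+\alpha}_N)$ converges, because these functions are only bounded. Their $t$-derivatives contain $|y|\,m_N\sum_j|u_j|^3$. They also contain collision terms of order $\epsilon^{-1}$, coming from $\dot u_j$ acting on $u_j^\alpha u_j^\beta$ and from $\frac{d}{dt}(x_k-x_j)/\epsilon$ inside $v^{\alpha\beta}_{Njk}$; the microscopic stress fluctuates on the collision time scale. Their $y$-Lipschitz constants involve $m_N\sum_j|x_j||u_j|^2$. None of this is controlled by $E_N$ and $C_N$; it is the same obstruction you cite for $\gamma=5$.

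Compactness then gives only a weak-$*$ limit $G$ in $L^\infty_{\mathrm{loc}}$. The relation $\psi^{1+\alpha}(t)-\psi^{1+\alpha}(t_0)=\int_{t_0}^tG\,ds$ shows $\psi^{1+\alpha}$ is Lipschitz in $t$ with an a.e.\ derivative. It does not give continuity of $G$, nor identify $G$ with a limiting stress transform. Similarly, convergence of $y$-derivatives for $\gamma\ge2$ would need control of $m_N\sum_j|x_j|^2|u_j|$, which the second moments of Step~2 do not provide.

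The paper's proof ends at Arzelà--Ascoli and does not address this part either. What both arguments actually establish is locally uniform convergence and Lipschitz bounds for $\gamma\le4$, plus the first identity. The $C^1$ assertion and the second identity need an additional estimate or a weaker, distributional formulation.
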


\textbf{Remark:} The boundedness of $E_N$ is obtained by restriction \eqref{5.10} as long as we assume that the total macroscopic energy $\int_\mathbb{R}^3e(t,x)dx$ is finite. Because we require the particle systems to approximate cell-wise the given $e(t,x)$.

\begin{proof}
By \eqref{A.8} and \eqref{A.6}, both
\(m_ N\sum \|x_i\|^2\) and \(m_ N\sum \|u_i\|^2\) are uniformly bounded on finite time intervals. Hence the sums defining $F(D^\gamma_N)(t;y)$ are uniformly bounded. 

For the second equation in \eqref{3.7}, 
\[
\partial_t[F(D^{1+\alpha}_N)(t;y)] =\sum_{\beta=1}^3i\,y^\beta\, m_N[\sum_{j=1}^Nu^\alpha_ju_j^\beta e^{iy x_j}+\sum_{j=1}^{N} e^{iyx_j}
 \Big( \sum_{k\neq j}^{N} E\big(i\,y(x_k-x_j)\big)\, v_{Njk}^{\alpha\beta}\Big)],
\]
we can see that the factor \(E(iz)\) satisfies \(|E(iz)|\le 1\) for real \(z\). Hence we only have to estimate $m_ N\sum_{i\neq k}v_{Njk}^{\alpha\beta}$. By Theorem \ref{thm:LNbound}, we know $\sum (\|x_k-x_i\|/\epsilon)\,\Phi'(\|x_k-x_i\|/\epsilon)$ is controlled by $\sum \Phi$ and $L$. So $m_N\sum \Phi$ is the potential energy.  Noticing that the scaling law \eqref{1.1} implies that \(M_N\), \(E_N\) and \(C_N\) (the quantity in \eqref{A.7}) remain bounded. Thus, time derivatives of $F(D^\gamma_N)(t;y)$ are uniformly bounded in \(L^\infty\) on bounded \((t,y)\)-domains.

Since $F(D^\gamma_N)(t;y)$ are equi-continuous and uniformly bounded; by Ascoli–Arzelà, there is a uniformly convergent subsequence on compact sets, yielding the statement.
\end{proof}

This convergence theorem gives us the intuition of assumption \ref{assump:ergodic}. However, we don't know if the limits $\psi^\gamma$ are indeed the Fourier transforms of $(\rho,u,e)$ given by \eqref{3.5}. What is more, Nothing further is said here about the function $F(D^5_N)(t;y)$, since its derivative involves third moment with respect to $u$ and certain mixed cross--moments which have not been proved to remain bounded in time. However, at least the factor $\epsilon^{-1}$ does \emph{not} appear in $\partial_t[F(D^{5}_N)(t;y)]$, which means its analytical form is still similar to that of the $\partial_t[F(D_{N}^{1+\alpha})(t;y)]$. 

In the remainder of the discussion, we make the additional hypothesis that an analogous compactness conclusion in Theorem \ref{thm:5.3} also holds for the fifth observable.

\begin{conjecture}\label{conj1}
    Theorem \ref{thm:5.3} also holds for $\gamma=5$.
\end{conjecture}

\subsection{ $\pi_{NN}$ in \eqref{3.3} and invariant measure }\label{sec:ergodic}
The discussion in this subsection is heuristic. We will try to argue that the $\pi_{NN}$ behave like the invariant measure along the Newton flow \eqref{1.3}. Recall that $\pi_{NN}$ is invariant along the projected trajectory. We want to show that the projection on $M_N$ won't change the flow too much. 

Since the functions $\partial_t[F(D_{N}^{\gamma})(t;y)]$ and their time derivatives are uniformly bounded independent of $N$, the change of $(\rho_N,\rho_N\bar{u}_N,e_N)$ is small under a small time $t_\epsilon$. This suggests that the trajectories of the $N$--particle system remain close to $M_N$. In other words, $\pi_{NN}$, which is invariant along the projected trajectory, only have $\epsilon$-change along the real flow. 

On the other hand, considering the motion law \eqref{1.3}, the equation $\dot u_i^\alpha$ contains the factor $\epsilon^{-1}$, so the microscopic accelerations are significantly large as $N\to\infty$. Particles exchange momentum rapidly and experience all $P\in M_N$ in a short time $t_\epsilon$.

Thus the trajectory of the phase point $P(t)$ decomposes into a \emph{rapid} motion along the manifold $M_N$ and a \emph{slow} drift orthogonal to $M_N(t)$ as the macroscopic fields change. Therefore, one can expect that the induced measure $\pi_{NN}$ from \eqref{3.3} is $\epsilon$-close to the invariant $\pi_{NN}$ introduced in Section \ref{section2}. Hence, it's convincing to have the following conjecture:

\begin{conjecture}\label{conj:pi}
    For the marginals $\pi_{Nl}$ and $\phi_{Nl}$ induced by the constrained measure $\mu_{M_N}$ \eqref{3.3}, the equation \eqref{2.5} and the balance laws \eqref{2.9}, \eqref{2.10}, and \eqref{2.18} will hold up to certain error $r_N$ which will vanish after taking the weak hydrodynamic limit. For example, we expect the following to hold weakly :\[\frac{\partial\rho_N(t;x)}{\partial t}+\frac{\partial}{\partial x^\alpha}\!\big[\rho_N(t;x)\,\bar{u}_N^\alpha(t;x)\big]=r_N\,.\]
\end{conjecture}

In addition, such invariance-like conjecture exactly corresponds to $(i)$ of the definition \ref{def:localequi}. Proving this is one of the main gap in Morrey's paper and requires the ergodic assumption.

\section{Derivation of $\phi_{Nl}$ via projection \eqref{3.3}}
\label{Section:phiNL}
The following Sections \ref{Section:phiNL} and \ref{Section:limit} are the most important while having most gaps. 

In this section, we derive an explicit finite-$N$ formula for $\phi_{Nl}$ starting from the constrained measure introduced in \eqref{3.3}. The goal is to understand what the $l$-particle marginals of the constrained phase-space measure look like, and how they lead to the local form later used in the limiting argument. 

The computation has three steps. First, we decompose $M_N$ into symmetrically placed pieces according to which particles lie in which cells. Second, we write the marginal density as a combinatorial factor times a conditional Liouville-volume ratio. Third, we compute this ratio by solving the cellwise momentum and energy constraints and integrating out the constrained variables.

\subsection{Decomposition of $M_N$ and the marginal $\pi_{Nl}$}

Recall that $M_N(t_1)$ is defined by the cellwise constraints
\[
D_N^\gamma(t_1;R_\nu)=C.
\]
The difficulty is that these constraints fix only how many particles lie in each cell, not which labels they carry. To make the induced measure more explicit, we therefore decompose $M_N$ into labeled pieces.

Let
\[
c_p:=\#\{x_j\in R_p\}.
\]
Then $M_N$ is the disjoint union of
\begin{equation}
\frac{N!}{c_1!\cdots c_P!},
\qquad
c_1+\cdots+c_P=N,
\tag{5.1}\label{5.1}
\end{equation}
symmetrically placed manifolds $M_{J_N}$, where $J_N=(S_1,\dots,S_P)$ is a sequence of disjoint subsets of $\{1,\dots,N\}$ such that $|S_p|=c_p$ for each $p$. On each $M_{J_N}$, the cell membership of every particle is fixed.

For simplicity, we now regard $N$ as fixed and drop the subscript $N$ from $c_p$, $P_N$, $R_{Np}$, and similar quantities. We also assume that the smallest $c_p\to\infty$ as $N\to\infty$.

Now let
\[
(x_{01},u_{01};\dots;x_{0l},u_{0l})
\]
be prescribed. If some $x_{0i}$ lies outside the chosen cells, then $\pi_{Nl}=0$. So we assume that the prescribed points are distributed among the cells according to
\begin{equation}
x_{0,L_{i-1}+k}\in R_{p_i},
\qquad
k=1,\dots,l_i,
\qquad
L_0=0,
\qquad
L_i=\sum_{m=1}^i l_m,
\qquad
L_r=l.
\tag{5.2}\label{5.2}
\end{equation}
Here $l_i$ counts how many of the prescribed points lie in the cell $R_{p_i}$.

For $M_{J_N}(x_{01},u_{01};\dots;x_{0l},u_{0l})$ to be nonempty, the label assignment $J_N$ must place the first $l$ prescribed particles into the correct cells. The number of such admissible assignments is
\begin{equation}
\frac{(N-l)!}{(c_1-l_1)!\cdots(c_r-l_r)!\,c_{r+1}!\cdots c_{\nu_N}!}.
\tag{5.3}\label{5.3}
\end{equation}
This is a purely combinatorial factor: it counts how many labelings remain possible after fixing the cells of the first $l$ particles.

We now compute $\pi_{Nl}(x_{01},u_{01};\dots;x_{0l},u_{0l})$ from \eqref{3.4}, \eqref{5.1}, and \eqref{5.3}. Since $M_N$ is the disjoint union of the manifolds $M_{J_N}$ with equal weight, the marginal splits into a combinatorial part and a geometric part:
\begin{equation}
\pi_{Nl}(x_{01},u_{01};\dots;x_{0l},u_{0l})
=
\frac{c_1!\cdots c_r!\,(N-l)!}{(c_1-l_1)!\cdots(c_r-l_r)!\,c_{r+1}!\cdots c_{\nu_N}!\,N!}
\;\kappa_{Nl},
\tag{5.4}\label{5.4}
\end{equation}
where
\[
\kappa_{Nl}
:=
\frac{
\operatorname{Proj}_l\mu_{M_{J_N}}
\!\left[M_{J_N}(x_{01},u_{01};\dots;x_{0l},u_{0l})\right]
}{
\mu_{M_{J_N}}(M_{J_N})
}.
\]

Formula \eqref{5.4} has two parts. The first is the combinatorial factor coming from the number of admissible labelings. The second, $\kappa_{Nl}$, is a conditional Liouville-volume ratio on a fixed labeled component $M_{J_N}$.

Here $\mu_{M_{J_N}}$ denotes the Liouville measure induced on the manifold $M_{J_N}$, and
\[
M_{J_N}(x_{01},u_{01};\dots;x_{0l},u_{0l})
\]
is the slice of $M_{J_N}$ obtained by fixing the first $l$ phase points. Thus $\kappa_{Nl}$ measures the relative phase-space volume of this constrained slice.

\subsection{Cell variables and the Jacobian}

We now compute $\kappa_{Nl}$ more explicitly with given macro-state $(\rho(t,x),u(t,x),e(t,x))$. For each cell $R_p$ at the fixed time $t_1$, define
\begin{equation}
\rho_p
:=
\frac{D^1(R_p)}{\operatorname{vol}(R_p)}
=
\frac{\int_{R_p}\rho(t_1,x)\,dx}{\operatorname{vol}(R_p)},
\qquad
\bar u_p
:=
\frac{D^{1+\alpha}(R_p)}{D^1(R_p)}
=
\frac{\int_{R_p}\bar u(t_1,x)\rho(t_1,x)\,dx}{\int_{R_p}\rho(t_1,x)\,dx},
\tag{5.6}\label{5.6}
\end{equation}
and
\[
\varepsilon_p+\frac12|\bar u_p|^2
:=
\frac{D^5(R_p)}{D^1(R_p)}
=
\frac{\int_{R_p}e(t_1,x)\,dx}{\int_{R_p}\rho(t_1,x)\,dx}.
\]
These are the cell-wise density, mean velocity, and total energy.

To simplify notation, write
\begin{equation}
x_{pj}=x_{L_{p-1}+j},
\qquad
x_{0pj}=x_{0,L_{p-1}+j},
\qquad
u_{pj}=u_{L_{p-1}+j},
\qquad
u_{0pj}=u_{0,L_{p-1}+j},
\tag{5.8}\label{5.8}
\end{equation}
for the particles associated with cell $R_p$.

Next, introduce centered velocities
\begin{equation}
v_{pj}=u_{pj}-\bar u_p,
\qquad
v_{0pj}=u_{0pj}-\bar u_p.
\tag{5.9}\label{5.9}
\end{equation}
This change of variables separates the prescribed bulk motion in each cell from the internal fluctuations. In these variables, the momentum constraints become linear, and the energy constraint is written in terms of the internal energy.

The manifold $M_{J_N}$ is then described by the restrictions
\begin{equation}
x_{pj}\in R_p,
\qquad
\sum_{j=1}^{c_p}v_{pj}^i=0,\quad i=1,2,3,
\qquad
\sum_{j=1}^{c_p}\bigl(|v_{pj}|^2+W_p(x)\bigr)=2c_p\varepsilon_p,
\tag{5.10}\label{5.10}
\end{equation}
where
\[
W_p(x)
=
K\sum_{i,j=1}^{c_p}\Phi(|x_{pi}-x_{pj}|/\epsilon)
+
K\sum_{q\neq p}\sum_{i=1}^{c_p}\sum_{k=1}^{c_q}
\Phi(|x_{pi}-x_{qk}|/\epsilon).
\]
The quantity $W_p(x)$ is the total interaction energy seen by the particles in the cell $R_p$, including both the interactions inside $R_p$ and those with particles in the other cells.

Each cell contributes three momentum constraints and one energy constraint. We therefore solve for four variables in each cell, for example
\[
v_{p,c_p}^1,\quad v_{p,c_p}^2,\quad v_{p,c_p}^3,\quad v_{p,c_p-1}^1,
\]
and use the remaining variables as free coordinates. The corresponding Jacobian in \eqref{3.3} becomes
\begin{equation}
\mathcal J=\pm\prod_{p=1}^P\mathcal J_p,
\qquad
|\mathcal J_p|=2^{3/2}K_p^{1/2},
\tag{5.11}\label{5.11}
\end{equation}
where
\begin{equation}
A_{pi}=\sum_{k=1}^i v_{pk},
\qquad
K_p
=
2c_p\varepsilon_p
-
W_p(x)
-
\sum_{j=1}^{c_p-2}|v_{pj}|^2
-\frac12|A_{p,c_p-2}|^2
-
2\sum_{\alpha=2}^3
\left(v^\alpha_{p,c_p-1}+\frac12A^\alpha_{p,c_p-2}\right)^2.
\tag{5.12}\label{5.12}
\end{equation}
The sign $\pm$ comes from solving the quadratic energy constraint. The quantity $K_p$ is the remaining energy budget in the cell after the constrained variables have been eliminated, so the condition $K_p>0$ describes the region where the constraints are compatible.

\subsection{The conditional volume ratio}

For fixed $(x_{01},u_{01};\dots;x_{0l},u_{0l})$, the free variables range over the product of the domains $G_p$ determined by $K_p>0$. Writing $\hat x$ for the remaining free position variables, we obtain
\begin{equation}
\operatorname{Proj}_l\mu_{M_{J_N}}(x_{01},u_{01};\dots;x_{0l},u_{0l})
=
\int_{G_1\times\cdots\times G_P}
\prod_{p=1}^r g_{p,l_p}(x_{0p},v_{0p};\hat x)\,d\hat x,
\tag{5.13}\label{5.13}
\end{equation}
where
\[
g_{p,l_p}(x_{0p},v_{0p};\hat x)
=
L_p
\int_{K_p>0}
K_p^{-1/2}\,
dv_{p,l_p+1}\cdots dv_{p,c_p-2}\,dv_{p,c_p-1}^2\,dv_{p,c_p-1}^3.
\]
Here $g_{p,l_p}$ is the contribution of the cell $R_p$ to the restricted Liouville volume, and $L_p$ is a constant depending only on the dimension of the constraint manifold.

To evaluate $g_{p,l_p}$, we first integrate with respect to $v_{p,c_p-1}^2$ and $v_{p,c_p-1}^3$. When $l_p=c_p-2$, this gives
\begin{equation}
g_{p,c_p-2}(x_0,v_0;\hat x)
=
L
\left\{
\mathrm{pos}
\left[
2c_p\varepsilon_p
-
W(x)
-
\sum_{i=1}^{c_p-2}|v_i|^2
-
\frac12|A_{c_p-2}|^2
\right]
\right\}^{1/2},
\tag{5.14}\label{5.14}
\end{equation}
where
\[
\mathrm{pos}\,y := y\,\mathbf 1_{\{y\ge 0\}}.
\]

Next, for $1\le q\le c_p-2$, we use the identity
\[
\sum_{j=1}^q|v_j|^2+\frac{1}{c_p-q}|A_q|^2
=
\sum_{j=1}^{q-1}|v_j|^2+\frac{1}{c_p-q+1}|A_{q-1}|^2
+
\frac{c_p-q+1}{c_p-q}
\left|v_q+\frac{1}{c_p-q+1}A_{q-1}\right|^2.
\]
Iterating this gives
\begin{align}
\sum_{i=1}^{c_p-2}|v_i|^2+\frac12|A_{c_p-2}|^2
&=
\frac32\left|v_{c_p-2}+\frac13A_{c_p-3}\right|^2
+
\frac43\left|v_{c_p-3}+\frac14A_{c_p-4}\right|^2
+\cdots
\notag\\
&\quad
+
\frac{c_p-l}{c_p-l-1}\left|v_{l+1}+\frac{1}{c_p-l}A_l\right|^2
+
\sum_{i=1}^l|v_i|^2
+
\frac{1}{c_p-l}|A_l|^2.
\tag{5.15}\label{5.15}
\end{align}
This identity allows us to integrate successively with respect to
\[
v_{c_p-2},v_{c_p-3},\dots,v_{l_p+1}
\]
over all space. We therefore obtain
\begin{align}
g_{p,l}(x_{0l},v_{0l};\hat x)
&=
L\cdot K^*_{c_p,l_p}
\left\{
\mathrm{pos}
\left[
2c_p\varepsilon_p
-
W(x)
-
\sum_{i=1}^{l_p}|v_i|^2
-
(c_p-l_p)^{-1}|A_l|^2
\right]
\right\}^{(3c_p-3l_p-5)/2}
\notag\\
&=
L\cdot K_{c_p,l_p}
\left(\frac{B}{\pi}\right)^{3l/2}
\left\{
\mathrm{pos}
\left[
1-\frac{2B}{3c_p}W(x)-\frac{2B}{3c_p}U_{c_p,l}
\right]
\right\}^{(3c_p-3l_p-5)/2},
\tag{5.16}\label{5.16}
\end{align}
where
\begin{align*}
B&=\frac{3}{4\varepsilon_p},\\
K_{c,l}
&=
\frac{
2^{3/2}\Gamma(3/2)\,\pi^{3(c-2)/2}\,(2c\varepsilon_p)^{(3c-5)/2}
}{
(c-l)^{3/2}(3c/2)^{3/2}\Gamma\!\bigl(3(c-1-l)/2\bigr)
},\\
U_{c,l}
&=
\sum_{i=1}^l|v_i|^2+(c-l)^{-1}|A_l|^2.
\end{align*}

Formula \eqref{5.16} is the key finite-$N$ output of this section. It shows that, after eliminating the constraints, the conditional marginal is governed by a cutoff power of the remaining interaction energy and fluctuation energy. This is the finite-$N$ precursor of the Gibbs-type local equilibrium form that appears later in the limiting argument.

\subsection{Restriction to one cell and the resulting formula}

To carry out the successive integration in \eqref{5.13} explicitly, we now make a simplifying assumption: all prescribed particles $x_{01},\dots,x_{0l}$
lie well inside the same typical cell $R_1$. This is a real restriction, and it is one of the gaps in the argument.

\begin{conjecture}\label{conj:cell}
For $l$ being small, the functions $\phi_{Nl}$ derived under the assumption that
\[
x_{01},\dots,x_{0l}\in R_1
\]
converge to the same limiting function $\phi_l$ as in the general case.
\end{conjecture}
Since each cell can contain many particles and what we care is actually $l$ small , we can imagine these particles either in the same cell or independent in the following derivation.

Under this one-cell restriction, the interaction terms simplify. If the cross-cell terms
\[
\Phi(|x_{pj}-x_{qk}|/\epsilon)
\]
were absent from $W_p(x)$, then each $g_{p,l}$ would depend only on the variables from its own cell, and \eqref{5.13} would factor completely. In the present simplified setting, only the contribution from $p=1$ remains nontrivial, while the other cells contribute through normalization. Using \eqref{5.16}, we obtain
\begin{equation}
\kappa_{Nl}(x_{01},u_{01};\dots;x_{0l},u_{0l})
=
\hat K_{N,l}
\left(\frac{B}{\pi}\right)^{3l/2}
\frac{
\displaystyle
\int_{R_1^{c_1-l}}
\left[
\mathrm{pos}
\left(
1-\frac{2B_1}{3c_1}W_1(x)-\frac{2B_1}{3c_1}U_{c_1,l}
\right)
\right]^{(3c_1-3l-5)/2}
dx_{l+1}\cdots dx_{c_1}
}{
\displaystyle
\int_{R_1^{c_1}}
\left[
\mathrm{pos}
\left(
1-\frac{2B_1}{3c_1}W_1(x)
\right)
\right]^{(3c_1-5)/2}
dx_1\cdots dx_{c_1}
}.
\tag{5.17}\label{5.17}
\end{equation}
Here
\[
\hat K_{c_1,l}=\frac{K_{c_1,l}}{K_{c_1,0}},
\qquad
\lim_{c_1\to\infty}\hat K_{c_1,l}=1.
\]
Since $R_1$ is arbitrary and $c_1\to\infty$, we now drop the subscript $1$, replace $c_1$ by $N$, and introduce
\begin{equation}
n=\frac{3N}{2},
\qquad
k_0=\frac52,
\qquad
k_1=\frac{3l+5}{2}.
\tag{5.18}\label{5.18}
\end{equation}
This simplification needs more discussion, see the later remark.

For $N\ge l+1$ and real $k$, define
\begin{align}
C^{\,n-k}_{Nkl}(\rho,b)
&:=
r^{-N+l}
\int_{R^{N-l}}
\left[
\mathrm{pos}
\left(
1-\frac{b}{n}W_{Nl}^{**}
\right)
\right]^{n-k}
dx_{l+1}\cdots dx_N,
\tag{5.19a}\label{5.19a}\\
W_{Nl}^{**}
&:=
K\sum_{i,k=l+1;i\neq k}^N
\Phi(|x_i-x_k|/\epsilon),
\notag\\
W_{Nl}^{*}
&:=
2K\sum_{j=1}^l\sum_{k=l+1}^N
\Phi(|x_j-x_k|/\epsilon),
\notag\\
r&:=\mu(R),
\qquad
\rho:=ND\epsilon^3/r.
\notag
\end{align}
The dependence on $\rho$ of $C_{Nkl}^{n-k}$ in \eqref{5.19a} comes from considering the change of variable $\xi=x/\epsilon$. Indeed, $C_{Nkl}^{n-k}$ also should depend on the shape of $R$. However, by \emph{van Hove theorem} \cite{Ruelle1969}, the dependence vanishes after taking the hydrodynamic limit. We don't discuss it in detail in this paper. For simplicity, we can assume the shape of $R$ as fixed.

Similarly,
\begin{equation}
D^{\,n-k}_{Nkl}(x_1,\dots,x_l;\rho,b)
:=
r^{-N+l}
\int_{R^{N-l}}
\left[
\mathrm{pos}
\left(
1-\frac{b}{n}W_{Nl}^{**}-\frac{b}{n}W_{Nl}^{*}
\right)
\right]^{n-k}
dx_{l+1}\cdots dx_N.
\tag{5.19b}\label{5.19b}
\end{equation}

With this notation, \eqref{5.17} becomes
\begin{equation}
\kappa_{Nl}
=
\hat K_{N,l}
\left(\frac{B}{\pi}\right)^{3l/2}
\left[
\mathrm{pos}
\left(
1-\frac{B}{n}W_{Nl}-\frac{B}{n}U_{Nl}
\right)
\right]^{n-k_1}
\left[
\frac{C_{Nk_0l}(\rho,B')}
     {C_{Nk_0l}(\rho,B)}
\right]^{n-k_0}
\gamma_{Nk_1l}(x_1,\dots,x_l;\rho,B'),
\tag{5.20}\label{5.20}
\end{equation}
where
\begin{align*}
\gamma_{Nkl}
&:=
\frac{D^{\,n-k}_{Nkl}}{C^{\,n-k}_{Nk_0l}}\;,\;\;\;
B':=
\frac{B}{1-\frac{B}{n}W_{Nl}-\frac{B}{n}U_{Nl}}\,,\\
W_{Nl}
&:=
K\sum_{i,j=1;\,i\neq j}^l\Phi(|x_i-x_j|/\epsilon).
\end{align*}

\textbf{Remark :}
Three points deserve emphasis in passing 
from~\eqref{5.17} to~\eqref{5.18}--\eqref{5.19b}.

\noindent\textit{(i) The energy splitting is exact for the
$c_1$-particle cell.}
The physical cell $R_1$ contains exactly $c_1$ particles,
with $l$ prescribed particles $x_1,\ldots,x_l \in R_1$
and $c_1 - l$ free particles $x_{l+1},\ldots,x_{c_1}$
integrated over $R_1^{c_1-l}$.
Under the one-cell restriction of
Conjecture~\ref{conj:cell}, there are no cross-cell
interaction terms, and the intra-cell energy splits as
the exact algebraic identity
\begin{equation*}
  W_1(x_1,\ldots,x_{c_1})
  \;=\;
  \underbrace{K\!\!\sum_{\substack{i,j=1\\i<j}}^{l}
    \Phi\!\left(\tfrac{|x_i-x_j|}{\epsilon}\right)}_{
    W_{Nl}\ \text{(prescribed--prescribed)}}
  +\;
  \underbrace{2K\sum_{j=1}^{l}\sum_{k=l+1}^{c_1}
    \Phi\!\left(\tfrac{|x_j-x_k|}{\epsilon}\right)}_{
    W^{*}_{Nl}\ \text{(prescribed--free)}}
  +\;
  \underbrace{K\!\!\sum_{\substack{i,k=l+1\\i<k}}^{c_1}
    \Phi\!\left(\tfrac{|x_i-x_k|}{\epsilon}\right)}_{
    W^{**}_{Nl}\ \text{(free--free)}}
\end{equation*}
Since $W_{Nl}$ does not involve the free integration
variables, it factors out as the shifted parameter
$B'$ in~\eqref{5.20}.
This splitting is exact and requires no further
approximation.

\noindent\textit{(ii) The replacement $c_1 \to N$ produces a
macroscopic reference cell, but the limit is unaffected.}
The replacement $n = 3c_1/2 \mapsto n = 3N/2$ in~\eqref{5.18}
defines new functions $C_{Nkl}(\rho,b)$ and $D_{Nkl}(\rho,b)$
on a reference cell $R$ with volume
\begin{equation*}
  r \;=\; \frac{ND\epsilon^3}{\rho} \;=\; \frac{1}{\rho},
\end{equation*}
using the scaling $ND\epsilon^3 = 1$.
This $R$ is a \emph{macroscopic} cell, not one of the small
partition cells of Assumption~\ref{assump:R_N}, and it
contains $N$ particles rather than the $c_1 \ll N$ of the
physical cell $R_1$.
The replacement therefore changes the finite-$N$ formula. We can still use \eqref{5.17} since its formula derivation doesn't depend on the size of $R$. 

Two facts justify it nonetheless.
First, formula~\eqref{5.16} for $g_{p,l}$ is derived
entirely in \emph{velocity space}: the energy constraint
is used to integrate out velocity variables, and the
positions enter only as parameters through $W_p(x)$.
The derivation uses neither the diameter nor the
volume of $R_p$, so~\eqref{5.16} holds for any cell
size, including a macroscopic one.
Second, we will know later, by Lemma~\ref{lemmaClimit},
\begin{equation*}
  C_{c_1,k,l}(\rho,b)
  \;\xrightarrow{\;c_1\to\infty\;}
  C(\rho,b)
  \;\xleftarrow{\;N\to\infty\;}
  C_{Nkl}(\rho,b),
\end{equation*}
where the limit $C(\rho,b)$ depends only on the local
density $\rho$ and the parameter $b$, not on the cell size
or particle count used to compute it.
This is the mathematical expression of \emph{thermodynamic
universality}: equilibrium properties in the thermodynamic
limit are functions of thermodynamic parameters alone.
The short-range assumption on $\Phi$ (Assumption~\ref{Phi},
decay faster than $r^{-3}$) ensures that the interaction
energy remains local even inside a macroscopic cell,
so the thermodynamic limit is well-defined.
The replacement $c_1 \to N$ therefore introduces an error
that vanishes in the subsequential limit of
Theorem~\ref{Thm:limit}, and the limiting object
$C(\rho,b)$ is the same regardless of which particle
count is used.

\noindent\textit{(iii) The combinatorial prefactor requires
$l \ll c_1$.}
The prefactor $\hat{K}_{N,l} = K_{c_1,l}/K_{c_1,0}$
in~\eqref{5.17} satisfies
\begin{equation*}
  \hat{K}_{N,l}
  \;=\;
  \frac{c_1^{\,3/2}\,
        \Gamma\!\left(\tfrac{3(c_1-1)}{2}\right)}
       {(c_1-l)^{3/2}\,
        \Gamma\!\left(\tfrac{3(c_1-l-1)}{2}\right)}
  \;=\;
  1 + O\!\left(\tfrac{l}{c_1}\right)
  \qquad \text{as } c_1 \to \infty,\ l \text{ fixed.}
\end{equation*}
After the replacement $c_1 \to N$ this reads
$1 + O(l/N)$.
The condition $l$ fixed and $N \to \infty$ is precisely
the BBGKY regime, and the error is absorbed into the
subsequential limit.

\subsection{Expression for $\phi_{Nl}$}
Finally, using \eqref{5.4}, \eqref{5.6}, and the normalization in \eqref{2.1}, we obtain
\begin{equation}
p_{Nl}
=
c_{Nl}\,\rho^l\,\mu(R_1)^l\,\kappa_{Nl},
\tag{5.21}\label{5.21}
\end{equation}
where
\[
c_{Nl}:=\frac{N!}{N^l(N-l)!}.
\]
Here $\rho^l$ reflects the local density contribution of the $l$ prescribed particles, while $\mu(R_1)^l$ is the macroscopic cell-volume factor. Also,
\[
c_{Nl}\to 1
\qquad\text{as }N\to\infty.
\]
Using \eqref{5.20}, \eqref{5.21}, and the definition \eqref{2.4}, we obtain
\begin{align}
\phi_{Nl}(x,u;\xi_1,v_1;\dots;\xi_{l-1},v_{l-1})
&=
\hat K_{N,l}
\left(\frac{B}{\pi}\right)^{3l/2}
\left[
\mathrm{pos}
\left(
1-\frac{B}{n}W_l-\frac{B}{n}V_{Nl}
\right)
\right]^{n-k_1}
\notag\\
&\quad\times
\rho^l
\left[
\frac{C_{Nk_0l}(\rho,B')}
     {C_{Nk_0l}(\rho,B)}
\right]^{n-k_0}
g_{Nk_1l}(x;\xi_1,\dots,\xi_{l-1};\rho,B').
\tag{5.22}\label{5.22}
\end{align}
Here
\begin{align*}
g_{Nkl}
&:=
\gamma_{Nkl}(x,x+\epsilon\xi_1,\dots,x+\epsilon\xi_{l-1};\rho,B'),\\
V_{Nl}
&:=
\sum_{j=0}^{l-1}|u-\bar u+v_j|^2
+
(N-l)^{-1}\left|\sum_{j=0}^{l-1}(u-\bar u+v_j)\right|^2,\\
W_l
&:=
\sum_{j,k=0}^{l-1}\Phi(|\xi_j-\xi_k|),
\qquad
\xi_0=0,
\qquad
v_0=0.
\end{align*}

Formula \eqref{5.22} is the finite-$N$ expression that will be used in the next section. It already has the structure of a local equilibrium distribution: the macroscopic fields enter through $\rho$, $\bar u$, and $\varepsilon$, while the microscopic dependence appears through the interaction term $W_l$, the fluctuation term $V_{Nl}$, and the normalized integrals $C_{Nkl}$ and $D_{Nkl}$.

To summarize, this section reduces the constrained marginal problem to an explicit finite-$N$ formula. The derivation separates a combinatorial part from a geometric one, then computes the geometric part through the Jacobian of the cell-wise constraints. The result is already close to the Gibbs-type form that appears later. The main remaining gap is the one-cell restriction in Conjecture \ref{conj:cell}, which is not proved here.

\section{The limiting function $\phi_l$}\label{Section:limit}
In this section, we try to show that $\phi_{Nl}$ in the form of \eqref{5.22} converges subsequentially to certain $\phi_l$ which is the Gibbs distribution. Recall that the dependence on the cell $R$ now reduces to the dependence on $\rho$ in \eqref{5.19a} if we can take the hydrodynamic limit, by van Hove theorem \cite{Ruelle1969}. Without loss of generality, we can treat $R$ as fixed.

The usual assumed form for $\phi_l$ Gibbsian (see \cite[Chapter 6]{BornGreen1949})
reduces in our notation to
\begin{equation}
  \phi_l = (A/\pi)^{3l/2}
  \exp\!\left\{
    -A\!\left[
      \sum_{i=0}^{l-1} \|u + v_i - \bar{u}\|^2 + 2\mathcal{X}_l
    \right]
  \right\}
  \cdot \rho^l g_l(\xi_1, \dots, \xi_{l-1}; \rho, A),
  \tag{6.1}\label{6.1}
\end{equation}
where
\[
  2\mathcal{X}_l = K\sum_{\substack{i,k=0\\ i \neq k}}^{l-1} \Phi(\|\xi_i - \xi_k\|),
  \qquad
  v_0 = \xi_0 = 0,
\]
and $\bar{u}(t;x)$ and $\rho(t;x)$ have their usual physical meaning. $g_l$ is the correlation and will be explicitly computed in Section \ref{Sectiongl}.  The constant $A(t;x)$ is given by
\[
  A(t;x) = \frac{1}{2kT(t;x)},
\]
where $T(t;x)$ denotes the local temperature.
In what follows we regard $A$, $\rho$, and $\bar{u}$ as constants.

\textbf{Remark:} Previous, we didn't treat $\rho,\bar{u}$ as constant. Now Morrey is moving to a uniform equilibrium status: time independent, and since our $\phi_l$ is free of $x$, one can just consider fixed $x$ so that $\rho, \bar{u}$ then become constant. (This inequivalency of $x$ and $\xi$ still confused me though.)

We also require that
\begin{equation}
  \lim_{\xi_i \to \infty, \, \xi_i - \xi_{i-1} \to \infty}
  g_l(\xi_1, \dots, \xi_{l-1}; \rho, A) = 1.
  \tag{6.2}\label{6.2}
\end{equation}
\textbf{Remark:}
This is the physical requirement that when all particles in a group are infinitely separated, the correlations between them vanish and their joint probability distribution factorizes into the product of independent single--particle distributions.

In equation \eqref{6.1} the function $g_l$ represents the \emph{correlation correction}
to the product of one--particle Gibbs distributions. 
If the particles were completely independent, we would have $g_l \equiv 1 $ and $ \phi_{l+1}$ sort of $\phi_l\cdot\phi_1$. 
Thus, $g_l \rightarrow 1$ measures the deviation from independence
arising from interparticle correlations. The concrete form of $g_l$ will be derived in Section \ref{Sectiongl}.

\begin{theorem}[6.1]\label{Thm:limit}
Given any subsequence of the integers $N$, We can extract a further sequence such that $C_{Nkl}(\rho,b)$ in \eqref{5.22} converge uniformly to $C(\rho,b)$. Suppose for $\rho_0=ND\epsilon^3/r$ and $B_0=\frac{3}{4\varepsilon_1}$, $C(\rho_0,B_0)>0$ and $\frac{\partial C}{\partial b}(\rho_0,B_0)$ exists. And we assume there exist positive constants $\gamma,\delta,m_{kl},M_{kl}$ and $N_{kl}$ and such that for $g_{Nkl}$ in \eqref{5.22}
\begin{equation}
m_{kl} 
\le 
g_{N kl}(x,\xi_1,\ldots,\xi_{l-1};\rho,b)
\le 
M_{kl},
\qquad 
C(\rho,b)\ge \gamma ,
\tag{6.3}\label{6.3}
\end{equation}
whenever
\[
N>N_{kl}, 
\qquad 
|\rho-\rho_0|\le \delta,
\qquad 
|b-B_0|\le \delta,
\]
and all $x_1,\ldots,x_l \in R\times\cdots\times R$, with $N$ in the chosen subsequence.

Then a further subsequence, let's still call it $N$ for simplicity, may be selected such that, for each $l$,
\[
\phi_{Nl}(x,u;\xi_1,v_1;\ldots;\xi_{l-1},v_{l-1};\rho_0,B_0)
\]
converges to a function $\phi_l$, uniformly for
$
x \text{ on any } R' \subset R^{(0)}, \qquad
\xi_1,\ldots,\xi_{l-1} \text{ on any bounded domain } G,
$ and $
u,v_1,\ldots,v_{l-1} \text{ on any bounded } \Gamma,
$
with
\begin{equation}
\phi_{Nl}\exp\{\beta(V_l+W_l)\}
\tag{6.4}\label{6.4}
\end{equation}
uniformly bounded uniformly in $N$ for some $\beta>0$, where
\[
V_l=\sum_{j=0}^{l-1}|u-\bar u+v_j|^2,
\qquad
v_0=0 .
\]

What's more, the limiting functions $\phi_l$ are of the form \eqref{6.1} with
\begin{equation}
A = B_0\left(1 - \frac{B_0}{C(\rho_0,B_0)}\frac{\partial C(\rho_0,B_0)}{\partial b}\right).
\tag{6.5}\label{6.5}
\end{equation}
\end{theorem}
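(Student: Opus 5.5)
The plan is to work directly from the explicit finite-$N$ formula \eqref{5.22} and pass to the limit factor by factor. First I would settle the compactness claim for $C_{Nkl}(\rho,b)$. After the change of variables $\xi=x/\epsilon$, the function $b\mapsto[\mathrm{pos}(1-\frac{b}{n}W^{**}_{Nl})]^{n-k}$ is monotone in $b$ and takes values in $[0,1]$. Hence $C^{n-k}_{Nkl}\in[0,1]$, and $C_{Nkl}$ is nonincreasing in $b$. Using the van Hove remark, $\rho$ enters only through the volume $r=ND\epsilon^3/\rho$, so $C_{Nkl}$ is monotone in $\rho$ as well. I would then prove equicontinuity in $(\rho,b)$ on compact sets. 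For the $b$-direction, I would differentiate under the integral: $\partial_b C^{n-k}=-\frac{n-k}{n}\int W^{**}[\mathrm{pos}(\cdot)]^{n-k-1}$, which is bounded by the mean interaction energy per particle, and I would control that quantity via the bounds of Theorem~\ref{thm:LNbound}. A Helly/Arzel\`a--Ascoli diagonal argument then gives the uniformly convergent subsequence with limit $C(\rho,b)$. A further diagonalization over $k\in\{k_0,k_1\}$ and over all $l$ produces a single subsequence. The same diagonal extraction, applied to the $g_{Nkl}$, would use the two-sided bounds \eqref{6.3} together with equicontinuity in $(\xi,\rho,b)$. That equicontinuity comes from the explicit form \eqref{5.19b}: the map $b\mapsto D^{n-k}$ is monotone, and the $\xi$-dependence enters through $W^*_{Nl}$, which is continuous away from collisions. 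This yields a limit $g_l$.

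Second, I would compute the limits of the explicit factors. Since $n=3N/2\to\infty$ while $W_l$ and $V_{Nl}$ are fixed, the identity $(1-\frac{B}{n}X)^{n-k_1}\to e^{-BX}$ holds uniformly for $X$ in bounded sets. Here $V_{Nl}\to V_l$ because the correction term is $(N-l)^{-1}|\cdot|^2$, and $\hat K_{N,l}\to1$ and $c_{Nl}\to1$ by Remark (iii). The key step is the ratio $[C_{Nk_0l}(\rho,B')/C_{Nk_0l}(\rho,B)]^{n-k_0}$ with $B'-B=\frac{B^2}{n}(W_l+V_{Nl})+O(n^{-2})$. Writing it as $\exp\{(n-k_0)[\log C_N(B')-\log C_N(B)]\}$, I would show that it converges to $\exp\{\frac{B^2}{C}\partial_b C\,(W_l+V_l)\}$. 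Combining this with $e^{-B(W_l+V_l)}$ gives exactly the exponent $-A(V_l+W_l)$ with $A$ as in \eqref{6.5}. The factor $(B/\pi)^{3l/2}$ must then be matched with $(A/\pi)^{3l/2}$. I would do this by absorbing the discrepancy into the normalization of $g_l$ and fixing that normalization through \eqref{6.2}; the condition $g_l\to1$ at separation pins the constants. For $W_l$ versus $2\mathcal X_l$, I would use the definitions in \eqref{2.6}.

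The main obstacle is this last step. Uniform convergence $C_N\to C$, together with the mere existence of $\partial_bC(\rho_0,B_0)$, does not by itself imply that $n[\log C_N(B+h/n)-\log C_N(B)]\to h\,\partial_b\log C(B)$, since that is a derivative at scale $1/n$ of the approximants. I would first exploit the structure of $C_N^{n-k}$ as a "microcanonical partition function": the function $b\mapsto(n-k)\log C_N(b)$ is concave or convex (to be checked via Hölder applied to $[\mathrm{pos}(1-\frac{b}{n}W)]^{n-k}$). For convex or concave functions, convergence of the functions implies convergence of the difference quotients at every point where the limit is differentiable, and this gives the desired limit. If convexity fails, I would fall back on the hypothesis $C\ge\gamma$ together with monotonicity in $b$ to sandwich the difference quotients between those at scales $\pm\delta$, and let $\delta\to0$.

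Finally, the Gaussian bound \eqref{6.4} would follow from $[\mathrm{pos}(1-\frac{B}{n}X)]^{n-k_1}\le e^{-B(n-k_1)X/n}$, together with the bounds \eqref{6.3} and a bound on the $C$-ratio of the form $\exp\{c(W_l+V_l)\}$ with $c<B$. The condition $c<B$ is exactly $A>0$.
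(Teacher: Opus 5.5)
Your treatment of the ratio $[C_{Nk_0l}(\rho,B')/C_{Nk_0l}(\rho,B)]^{n-k_0}$ is the paper's \eqref{6.23}, which is proved exactly by convexity in $b$ plus Lemma~\ref{lemmaconvex}. Note that what Cauchy--Schwarz gives is convexity of $C_{Nkl}$ itself, \eqref{6.20}, not of $(n-k)\log C_{Nkl}$. Your monotonicity fallback would not help, because monotonicity cannot control difference quotients at scale $1/n$.

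\textbf{Main gap: the Maxwellian prefactor.} You end with $(B_0/\pi)^{3l/2}e^{-A(V_l+W_l)}$ and propose to absorb $(B_0/A)^{3l/2}$ into $g_l$, ``pinned'' by \eqref{6.2}. But \eqref{6.2} is not a normalization you may choose. Once $\phi_l$ is the limit and $A$ is fixed by \eqref{6.5}, $g_l$ is determined. With your bookkeeping, \eqref{6.2} becomes the unproved claim that $\lim_N g_{Nk_1l}\to(A/B_0)^{3l/2}$ at large separation. The discrepancy is real: $\int(B_0/\pi)^{3l/2}e^{-A\sum_i|w_i|^2}\,dw=(B_0/A)^{3l/2}\neq 1$.

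In the paper this factor comes from configurational normalizations with \emph{different} exponents: $n-k_1$ in the numerator of \eqref{5.17} and $n-k_0$ in the denominator. Its limit is \eqref{6.22}, $C^{n-k}_{Nkl}/C^{n}_{N0l}\to[1-\tfrac{b_0}{C}\partial_bC]^k$ with $k=k_1-k_0=3l/2$, which is exactly $(A/B_0)^{3l/2}$. Proving this needs an ingredient absent from your plan. That ingredient is convexity in $k$ of $f_{N,l}(k)=\log(C^{n-k}_{Nkl}/C^{n}_{N0l})$, together with the telescoping identity \eqref{6.28} derived from \eqref{6.17b}; each of its terms converges by Lemma~\ref{lemmaconvex}. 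The same estimates \eqref{6.26}--\eqref{6.28} give $1-b\,C^{-1}\partial_bC\ge(1+\tfrac23L_1b)^{-1}$, i.e.\ $A>0$. Your argument for \eqref{6.4} uses $A>0$ without proof.

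\textbf{Second gap: convergence of $g_{Nkl}$ is asserted, not proved.} Continuity of $W^*_{Nl}$ for each fixed $N$ gives no modulus of continuity uniform in $N$. $W^*_{Nl}$ is a sum of $N-l$ terms, singular at collisions, and it enters through the power $n-k$. The paper gets uniform $\xi$-derivative bounds only by rewriting $\partial_\xi g_{Nkl}$ through $(l+1)$-particle quantities \eqref{6.32}. It then splits the $x_{l+1}$-integral into $\Sigma_{1N}$ and $\Sigma_{2N}$ \eqref{6.33}--\eqref{6.37}. Even so, the $x$-derivative bound remains open (Conjecture~\ref{conj:deri}), and your argument does not close it either.

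Likewise, monotonicity of $b\mapsto D^{n-k}_{Nkl}$ does not make $g_{Nk_1l}$ equicontinuous in $b$. You need that, because $g_{Nk_1l}$ is evaluated at $B'$ with $n(B'-B_0)\to B_0^2(W_l+V_l)$. The paper instead deduces $D_{Nkl}\to C$ from \eqref{6.3} and applies the convexity argument \eqref{6.23} to $D_{Nkl}$ as well. This gives \eqref{6.30}, and hence $g_{Nk_1l}(B')/g_{Nk_1l}(B_0)\to 1$.

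\textbf{Minor point.} The claims $C^{n-k}_{Nkl}\in[0,1]$ and monotonicity in $b$ and $\rho$ fail when $\Phi$ takes negative values. Assumption~\ref{Phi} allows this (see the constants $m$, $L$ in Lemma~\ref{lem:bounds}). The paper's equicontinuity of $C_{Nkl}$ comes instead from \eqref{6.7}--\eqref{6.8}, convexity, and the $\rho$-derivative bound \eqref{6.17c}.
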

\textbf{Remark :} The boundedness of $g_{Nkl}$ \eqref{6.3} is not rigorously proved yet.

\subsection{Lemmas}
To prove this theorem, one needs certain estimations. The notation used below follows that introduced in \eqref{5.18} and (5.19).

\begin{lemma}[6.2]\label{lem:bounds}
    For all integers $n,k,l$ and points $x_1,\ldots,x_l \in R$, let $m = -\min \Phi(r).$
Then, for $\rho$ small enough,
\begin{equation}
\frac{1}{12}\left[1-\frac{2}{3}\phi(h)\right]^n
\le
D^{\,n-k}_{N kl}(x_1,\ldots,x_l;\rho,b)
\le
\left(1+\frac{2}{3}bKL\right)^{n-k}
\exp\!\left\{\frac{l(l-1)mKb}{1+2bKL/3}\right\},
\tag{6.6}\label{6.6}
\end{equation}
and
\begin{equation}
\frac{1}{12}\left[1-\frac{2}{3}\phi(h)\right]^n
\le
C^{\,n-k}_{N kl}(\rho,b)
\le
\left(1+\frac{2}{3}bKL\right)^{n-k},
\qquad N>l .
\tag{6.7}\label{6.7}
\end{equation}
Here
\[
h=\frac{\rho \tau_0(b)}{D},
\qquad
\phi(h)=\sum_{\nu=1}^{\infty}\frac{h^\nu}{\nu(\nu+1)},
\qquad
0\le h\le1 .
\]
If $\Phi(r)\le0$ for $r\ge r_0$, then
\begin{equation}
C^{\,n-k}_{N kl}(\rho,b)\ge e^{-n\gamma},
\qquad
D^{\,n-k}_{N kl}\ge e^{-n\gamma},
\tag{6.8}\label{6.8}
\end{equation}
where
\[
\gamma :=\tfrac23\phi(h_0),
\qquad
h_0:=\frac{4\pi r_0^3\rho}{3D},
\qquad
h_0\le1 .
\]

\end{lemma}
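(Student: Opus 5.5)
The plan is to bound the normalized integrals in \eqref{5.19a}--\eqref{5.19b} directly, using two ingredients. For the upper bounds we need a pointwise lower bound on the interaction energy (stability). For the lower bounds we restrict the integration to a set of well-separated configurations whose volume fraction we can count. Throughout, $r^{-N+l}$ is the normalization that makes $r^{-N+l}\int_{R^{N-l}}dx_{l+1}\cdots dx_N=1$, and $n=3N/2$.

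\emph{Upper bounds.} By the stability estimate of Theorem~\ref{thm:LNbound}, $W^{**}_{Nl}\ge -KL(N-l)\ge -KLN$. Hence
\[
1-\tfrac{b}{n}W^{**}_{Nl}\le 1+\tfrac{2}{3}bKL
\]
pointwise, and integrating against the normalized volume gives the bound on $C^{n-k}_{Nkl}$ in \eqref{6.7}.

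For $D^{n-k}_{Nkl}$ we also need to control the cross term $W^*_{Nl}$. I would factor
\[
1-\tfrac{b}{n}(W^{**}+W^*)\le \big(1+\tfrac23 bKL\big)\Big(1+\tfrac{b(-W^*)}{n(1+2bKL/3)}\Big)
\]
and then use $(1+a/n)^{n-k}\le e^{a}$. This reduces \eqref{6.6} to the estimate $-W^{*}_{Nl}\le l(l-1)mK$ up to a term that is absorbed in the $L$-stability bound. To get it, I would apply the stability bound to the full configuration $x_1,\dots,x_N$ rather than to the free particles alone, and subtract the prescribed--prescribed part, using $\Phi\ge -m$ to bound $W_{Nl}$ from below by $-l(l-1)mK$.

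I am not certain that the bookkeeping closes exactly with the stated constant. If it does not, the fallback is that only the factor $e^{O(l^2)}$, uniform in $N$, is used later in Theorem~\ref{Thm:limit}, and that weaker estimate suffices there.

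\emph{Lower bounds.} Fix $\tau_0(b)$ so that when all pairwise distances among the free particles are at least $\tau_0^{1/3}\epsilon$ (a ball of volume $\tau_0\epsilon^3$), the tail condition $\lim r^{s}\Phi=0$ with $s>3$ makes $\tfrac{b}{n}W^{**}$ (and, after also excluding neighbourhoods of the $x_j$, $\tfrac bn W^*$) small. On this set the integrand is then bounded below by a fixed positive constant.

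We estimate the normalized volume of the separated set by placing the particles one at a time. The $\nu$-th particle must avoid $\nu$ excluded balls, each occupying a fraction $\tau_0\epsilon^3/r=h/N$ of $R$, where $h=\rho\tau_0/D$. So the volume fraction is at least
\[
\prod_{\nu<N}\big(1-\tfrac{\nu h}{N}\big)\approx \exp\Big(N\int_0^1\log(1-hx)\,dx\Big)=e^{-N\phi(h)},
\]
and $e^{-N\phi(h)}\ge[1-\tfrac23\phi(h)]^{n}$ since $n=3N/2$. The requirement that $\rho$ be small is exactly $h\le1$, which keeps the logarithm defined. The constant $\tfrac1{12}$ should come from comparing the Riemann sum with the integral, together with a fixed lower bound on the integrand raised to the power $n-k$, which one controls by choosing $\tau_0(b)$ so that $\tfrac bn W\le \tfrac{c}{n}$.

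In the case $\Phi\le0$ for $r\ge r_0$, the estimate \eqref{6.8} is simpler. Take the excluded balls of radius $r_0\epsilon$, so that $h_0=4\pi r_0^3\rho/(3D)$. On the separated set every pair has $\Phi\le0$, so $W\le0$, $\mathrm{pos}(1-\tfrac bnW)\ge1$, and the counting alone gives $e^{-N\phi(h_0)}=e^{-n\gamma}$.

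\emph{Main obstacle.} I expect the hardest step to be the lower bound (6.6)--(6.7) in the general case: choosing $\tau_0(b)$ and making the integrand on the separated set bounded below uniformly in $n$. This needs a quantitative tail sum of $\Phi$ over a separated configuration of density $\rho/D\epsilon^{3}$. Such a sum is $O(N)$, so it can only give a bound of order $n^{-1}\cdot N=O(1)$, and one must check that this does not destroy the $(1-\tfrac23\phi)^n$ rate. I would first try to show that the tail contribution is $O(1)$ in total rather than per particle, using $r^{s}\Phi\to0$ and the $\epsilon^{-3}$ counting of neighbours at each scale.
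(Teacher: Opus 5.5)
\paragraph{The gap: the lower bounds in \eqref{6.6}--\eqref{6.7}.}
Your argument fails at the step where the integrand on the separated set is bounded below by a fixed positive constant (you ask for $\tfrac bnW\le\tfrac cn$). No set of configurations permits this. At number density $\rho/D$ in the variables $x/\epsilon$, the interaction energy is of order $N$ for \emph{every} configuration. Take the purely repulsive $\Phi(r)=r^{-p}$, $p>3$, which is allowed by Assumption~\ref{Phi}. The balls of radius half the nearest-neighbour distance are disjoint, and Jensen's inequality then gives $W^{**}_{Nl}\ge cKN(\rho/D)^{p/3}$ for all $x_{l+1},\dots,x_N\in R$. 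Hence $[\mathrm{pos}(1-\tfrac bnW^{**}_{Nl})]^{n-k}\le(1-c')^{n-k}$ everywhere, so the hoped-for ``$O(1)$ in total'' tail estimate is false.

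What a separated set with a pointwise bound actually gives is $(1-\tfrac23b\eta)^{n-k}e^{-N\phi(h)}$, where $\eta$ is the largest per-particle energy allowed on the separated set. The separation radius is fixed by $\tau_0(b)$ independently of $\rho$, so $\eta$ does not tend to $0$ with $\rho$. The only room you have is $e^{-n\beta}$ versus $(1-\beta)^n$, with $\beta=\tfrac23\phi(h)$, a difference in rate of $-\beta-\log(1-\beta)=O(\beta^2)=O(\rho^2)$. So this route cannot reach $\tfrac1{12}[1-\tfrac23\phi(h)]^n$ for small $\rho$, and the constant $\tfrac1{12}$ is not a Riemann-sum artefact.

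\paragraph{The missing idea.}
Avoid pointwise bounds on the integrand and go through the Boltzmann weight of the repulsive part, as the paper does.
\begin{itemize}
\item Write $\Phi=\Phi_0-\Phi_1$ with $\Phi_0\ge0$, so that $D_{Nkl}\ge D_{0Nkl}$.
\item Integrate out $x_N,\dots,x_{l+1}$ one at a time, using $1-e^{-\sum_ia_i}\le\sum_i(1-e^{-a_i})$ for $a_i\ge0$. Each step produces a factor at least $1-\tfrac pNh$, with $h=\tfrac{\rho}{D}\int_{\mathbb R^3}(1-e^{-2Kb\Phi_0(|\xi|)})\,d\xi$. This integral is $\tau_0(b)$: a soft excluded volume that already prices in the energy.
\item Thus $r^{-N+l}\int e^{-bW_0}\ge\Pi_N(h)\ge e^{-n\beta}$. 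Your estimate of the product is exactly \eqref{6.12}.
\item On $\Sigma=\{W_0\le ns\}$ with $\beta<bs<1$, the function $t\mapsto e^t(1-t)$ is decreasing on $[0,1)$. This gives $(1-\tfrac bnW_0)^{n-k}\ge[e^{bs}(1-bs)]^ne^{-bW_0}$. The Boltzmann mass off $\Sigma$ is at most $e^{-nbs}$, so $D_{Nkl}\ge(1-bs)^n(e^{n(bs-\beta)}-1)$.
\item Put $bs=\beta+y/n$ and apply Bernoulli's inequality to get at least $(1-\beta)^n\,y\,(1-\tfrac{y}{1-\beta})$. Choosing $y=\tfrac{1-\beta}{2}$ and using $\beta\le\tfrac23$ (since $\phi(1)=1$) gives the factor $\tfrac1{12}$.
\end{itemize}

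\paragraph{The other parts.}
Your proof of \eqref{6.8} coincides with the paper's. Your upper bound for $C_{Nkl}$ is fine, though the stability estimate you need is Lemma~\ref{thm:lower_bound}, not Theorem~\ref{thm:LNbound}.

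The upper bound for $D_{Nkl}$, which the paper does not write out, has a sign error. Full-configuration stability gives $W^{**}_{Nl}+W^{*}_{Nl}\ge-KLN-W_{Nl}$, so you need an \emph{upper} bound on $W_{Nl}$. But $\Phi\ge-m$ bounds $W_{Nl}$ only from below, and $W_{Nl}\to+\infty$ as two prescribed points merge. What is needed is a joint lower bound on $W^{**}_{Nl}+W^{*}_{Nl}$, i.e.\ stability for the sum over all pairs except the prescribed--prescribed ones.
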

\begin{proof}
We first prove the lower bound in \eqref{6.6}. As in Morrey, write
\[
\Phi=\Phi_0-\Phi_1
\]
with the notation from Section 8. In particular,
\[
W^{**}_{0pl}=W^{***}_{0Nl}\big|_{N\mapsto p},\qquad W^{**}_{0ll}=0.
\]
Then for $l\le p < N$ we have
\begin{equation}
W^{**}_{0,p+1,l}+W^{*}_{0,p+1,l}
=
2K\sum_{i=1}^p \Phi_0\!\left(\frac{|x_i-x_{p+1}|}{\epsilon}\right)
+
W^{**}_{0pl}+W^{*}_{0pl}.
\tag{6.9}\label{6.9}
\end{equation}

Now recall that in our notation
\[
ND\epsilon^3=pr.
\]
Hence
\begin{align}
&r^{-1}\int_R
\left[
1-\exp\left\{-2Kb\sum_{i=1}^p \Phi_0\!\left(\frac{|x_i-x_{p+1}|}{\epsilon}\right)\right\}
\right]dx_{p+1}
\notag\\
&\le
\sum_{i=1}^p
r^{-1}\int_R
\left[
1-\exp\left\{-2Kb\,\Phi_0\!\left(\frac{|x_i-x_{p+1}|}{\epsilon}\right)\right\}
\right]dx_{p+1}
\notag\\
&\le
\frac{p}{N}\cdot \frac{\rho}{D}
\int_{\mathbb R^3}
\left[1-\exp\{-2Kb\,\Phi_0(|\xi|)\}\right]\,d\xi
=
\frac{p}{N}h.
\tag{6.10}\label{6.10}
\end{align}
Here $h$ is the quantity introduced earlier in the statement of the lemma.

Using \eqref{6.9} and \eqref{6.10} with $p=N-1$, we obtain
\begin{align*}
&r^{-N+l}\int_{R\times\cdots\times R}
\exp\{-bW^{**}_{0Nl}-bW^{*}_{0Nl}\}\,dx_{l+1}\cdots dx_N
\\
&=
r^{-N+l+1}\int_{R\times\cdots\times R}
\Bigg[
r^{-1}\int_R
\exp\left\{-2Kb\sum_{i=1}^{N-1}\Phi_0\!\left(\frac{|x_i-x_N|}{\epsilon}\right)\right\}dx_N
\Bigg]
\\
&\hspace{4cm}\times
\exp\{-bW^{**}_{0,N-1,l}-bW^{*}_{0,N-1,l}\}\,dx_{l+1}\cdots dx_{N-1}
\\
&\ge
\left[1-\frac{N-1}{N}h\right]
r^{-N+l+1}
\int_{R\times\cdots\times R}
\exp\{-bW^{**}_{0,N-1,l}-bW^{*}_{0,N-1,l}\}\,dx_{l+1}\cdots dx_{N-1}.
\end{align*}
Repeating this argument successively for $x_N,x_{N-1},\dots,x_{l+1}$ gives
\begin{equation}
r^{-N+l}\int_{R\times\cdots\times R}
\exp\{-bW^{**}_{0Nl}-bW^{*}_{0Nl}\}\,dx_{l+1}\cdots dx_N
\ge
\prod_{j=l}^{N-1}\left(1-\frac{j}{N}h\right)
=: \Pi_N(h).
\tag{6.11}\label{6.11}
\end{equation}

Next, estimate $\Pi_N(h)$ from below. We write
\begin{equation*}
-\log \Pi_N(h)
=
\sum_{\nu=1}^\infty \frac{h^\nu}{\nu}\sum_{j=0}^{N-1}\left(\frac{j}{N}\right)^\nu
\le N\phi(h),
\tag{6.12}\label{6.12}
\end{equation*}
so that
\[
\Pi_N(h)\ge e^{-N\phi(h)}=:e^{-n\beta},
\qquad
\beta=\frac{2}{3}\phi(h).
\]

Now choose $s$ so that
\[
\beta<bs<1,
\]
and let $\Sigma$ denote the set of $(x_{l+1},\dots,x_N)$ for which
\[
n^{-1}(W^{**}_{0Nl}+W^{*}_{0Nl})\le s.
\]
Then
\begin{align*}
D^{\,n-k}_{Nkl}
&\ge D^{\,n-k}_{0Nkl}
\notag\\
&\ge
r^{-N+l}\int_\Sigma
\left[
\exp\{bW^{**}_{0Nl}+bW^{*}_{0Nl}\}
\cdot
\Bigl(\mathrm{pos}\bigl(1-\tfrac{b}{n}W^{**}_{0Nl}-\tfrac{b}{n}W^{*}_{0Nl}\bigr)\Bigr)^{n-k}
\right]
\notag\\
&\times
\exp\{-bW^{**}_{0Nl}-bW^{*}_{0Nl}\}\,dx_{l+1}\cdots dx_N\\
&\ge
\bigl[e^{bs}(1-bs)\bigr]^n
\, r^{-N+l}\int_\Sigma
\exp\{-bW^{**}_{0Nl}-bW^{*}_{0Nl}\}\,dx_{l+1}\cdots dx_N
\notag\\
&\ge
\bigl(e^{-nbs-n\beta}-1\bigr)(1-bs)^n.
\tag{6.13}\label{6.13}
\end{align*}

On the right-hand side, set
\[
bs=\beta+\frac{y}{n}.
\]
Then \eqref{6.13} becomes
\begin{equation}
(1-\beta)^n(e^y-1)\left[1-\frac{1}{n}\frac{y}{1-\beta}\right]^n,
\qquad
0<y<n(1-\beta).
\tag{6.14}\label{6.14}
\end{equation}
For fixed $y$, the third factor increases with $n$, so \eqref{6.14} is bounded below by
\[
(1-\beta)^n\left[y-\frac{y^2}{1-\beta}\right],
\qquad
0<y<1-\beta.
\]
This gives the desired left-hand inequality in \eqref{6.6}.

The left-hand inequality in \eqref{6.7} for $l=0$ is just a special case of \eqref{6.6}. For general $l$, we use
\begin{equation}
C_{Nkl}(\rho,b)
=
C_{N-l,\,k-3l/2,\,0}\bigl(\rho(1-l/N),b\bigr),
\tag{6.15}\label{6.15}
\end{equation}
since
\[
(N-l)D\epsilon^3=(1-l/N)r
\]
in our notation. Hence the same lower bound follows for all $l$.

For the remaining lower bound involving the $D$'s, let $\Sigma_0\subset R^{N-l}$ be the set on which
\[
\frac{|x_j-x_k|}{\epsilon}\ge r_0
\]
for all $j,k=l+1,\dots,N$, and also for $j=1,\dots,l$, $k=l+1,\dots,N$. By choosing the variables one by one, we obtain
\begin{equation}
r^{-N+l}\mu(\Sigma_0)
\ge
\left(1-\frac{lh_0}{N}\right)\cdots
\left(1-\frac{(N-1)h_0}{N}\right)
\ge
\Pi_N(h_0),
\tag{6.16}\label{6.16}
\end{equation}
where
\[
h_0=\frac{4}{3}\pi r_0^3\epsilon^3r^{-1}N=\frac{4\pi r_0^3\rho}{3D}.
\]
Now the conclusion follows from \eqref{6.12}, since on $\Sigma_0$ the integrand in $D^{\,n-k}_{Nk}$ is bounded below by $1$.
\end{proof}

\begin{lemma}[6.2]
    The functions $C_{N kl}(\rho,b) \;\text{and}\; D_{N kl}(x_1;\cdots;x_l;\rho,b)$ are non–negative and convex with regard to $b$. Furthermore,
\begin{align}
b\,D^{-1}_{N kl}\frac{\partial D_{N kl}}{\partial b}
&=
1-D^{\,n+k}_{N kl}\,D^{\,n-k-1}_{N,k+1,l},
\tag{6.17a}\label{6.17a}\\[6pt]
b\,C^{-1}_{N kl}\frac{\partial C_{N kl}}{\partial b}
&=
1-C^{\,n+k}_{N kl}\,C^{\,n-k-1}_{N,k+1,l},\tag{6.17b}\label{6.17b}\\[6pt]
\left|\rho\,C^{-1}_{Nkl}\,\frac{\partial C_{Nkl}}{\partial\rho}\right|
&\le
\frac{2}{9}L_1 b
-\left[\frac{2}{9}L_1 b+\frac{\delta+1}{3}\right]
b\,C^{-1}_{Nkl}\,\frac{\partial C_{Nkl}}{\partial b}.
\tag{6.17c}\label{6.17c}
\end{align}
\end{lemma}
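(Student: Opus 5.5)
The plan is to work directly with the integral representations \eqref{5.19a}--\eqref{5.19b}. Here $C^{\,n-k}_{Nkl}$ denotes the $(n-k)$-th power of $C_{Nkl}$, so that
\[
C_{Nkl}(\rho,b)=\Big(r^{-N+l}\int_{R^{N-l}} f_b^{\,n-k}\,dx_{l+1}\cdots dx_N\Big)^{1/(n-k)},\qquad f_b:=\mathrm{pos}\Big(1-\tfrac{b}{n}W^{**}_{Nl}\Big),
\]
and $D_{Nkl}$ is defined in the same way with $W^{**}_{Nl}+W^{*}_{Nl}$ in place of $W^{**}_{Nl}$. All three statements (non-negativity and convexity, the two derivative identities, and the $\rho$-estimate) are handled by one argument for $C$, which is then repeated verbatim for $D$.

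\emph{Non-negativity and convexity.} Non-negativity is immediate. For convexity, note that for each fixed configuration the map $b\mapsto f_b$ is the maximum of an affine function of $b$ and $0$. This holds even where $W$ is negative, since $\Phi$ may have an attractive part. Hence $f_b$ is pointwise convex and nonnegative. I would then argue as follows. Set $b_\theta=\theta b_1+(1-\theta)b_2$, so that $0\le f_{b_\theta}\le \theta f_{b_1}+(1-\theta)f_{b_2}$. Since $C_{Nkl}$ is the $L^{p}$ norm of $f_b$ with respect to the probability measure $r^{-N+l}dx$, with $p=n-k\ge1$, monotonicity of the norm together with Minkowski's inequality gives $C(b_\theta)\le\theta C(b_1)+(1-\theta)C(b_2)$.

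\emph{The identities \eqref{6.17a}--\eqref{6.17b}.} Write $I_k(b)=C^{\,n-k}_{Nkl}$. Differentiation under the integral, which is legitimate because the integrand is Lipschitz in $b$ when $n-k>1$, gives
\[
b\,I_k'(b)=-(n-k)\,r^{-N+l}\int \tfrac{b}{n}W\,f_b^{\,n-k-1}.
\]
The key algebraic observation is that $\tfrac bnW\,f_b^{\,n-k-1}=f_b^{\,n-k-1}-f_b^{\,n-k}$ on the support of $f_b$. Consequently $b I_k'=-(n-k)(I_{k+1}-I_k)$. Since $b\,C^{-1}\partial_bC=b I_k'/((n-k)I_k)$, this yields
\[
b\,C^{-1}\partial_bC=1-C_{Nkl}^{-(n-k)}C_{N,k+1,l}^{\,n-k-1}.
\]
This is \eqref{6.17b}, where I read the printed exponent $n+k$ as $-(n-k)$. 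The same computation with $W^{**}+W^{*}$ gives \eqref{6.17a}.

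\emph{The $\rho$-estimate \eqref{6.17c}.} I expect this to be the main obstacle. Since $\rho=ND\epsilon^3/r$ and the shape of $R$ is fixed, varying $\rho$ amounts to dilating $R$ by a factor $\lambda$ with $r\propto\lambda^3$, so that $\rho\,\partial_\rho=-\tfrac13\lambda\partial_\lambda$. After the substitution $x=\lambda y$ the normalized measure $r^{-N+l}dx$ becomes $\lambda$-independent, and $\lambda$ enters only through $\Phi(\lambda|y_i-y_j|/\epsilon)$. Thus $\lambda\partial_\lambda W=K\sum s\,\Phi'(s)$ with $s=|x_i-x_j|/\epsilon$, a virial sum. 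Differentiating then gives
\[
\rho\,C^{-1}\partial_\rho C=\frac{1}{3}\,\frac{r^{-N+l}\int \tfrac{b}{n}\big(K\sum s\Phi'(s)\big)f_b^{\,n-k-1}}{I_k}.
\]
Next I would apply the bound of Theorem~\ref{thm:LNbound}, which rests on Assumption~\ref{Phi}(iii). It controls $\big|\sum s\Phi'(s)\big|$ by $(\delta+1)\sum\Phi$ plus a term $L_1\cdot(\text{number of particles})$ coming from the region where $\Phi$ is not dominant. The term $\tfrac bn\sum\Phi\,f_b^{\,n-k-1}=f_b^{\,n-k-1}-f_b^{\,n-k}$ is then converted back, via the identity just proved, into $1-b\,C^{-1}\partial_bC$, i.e.\ into $C^{-(n-k)}C_{N,k+1,l}^{\,n-k-1}$. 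Meanwhile, the $L_1$ term contributes $\tfrac{2}{9}L_1 b\,C^{-(n-k)}C_{N,k+1,l}^{\,n-k-1}$, using $N/n=2/3$ and the factor $\tfrac13$. Substituting $C^{-(n-k)}C_{N,k+1,l}^{\,n-k-1}=1-b\,C^{-1}\partial_bC$ and collecting terms should produce exactly the coefficients $\tfrac29L_1b$ and $\tfrac{\delta+1}{3}$ in \eqref{6.17c}.

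The delicate points are twofold. The first is the sign bookkeeping when $\Phi<0$; this is why the bound is phrased with $|\cdot|$ and uses the convexity sign $b\,\partial_bC\le C$. The second is justifying differentiation at the cutoff boundary $\{bW=n\}$, which I would handle by the same Lipschitz argument as above for $n-k-1>0$.
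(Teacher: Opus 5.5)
Your proof is correct. For \eqref{6.17a}--\eqref{6.17c} it follows the paper's approach. You differentiate under the integral, use $\tfrac bn W\,f_b^{\,n-k-1}=f_b^{\,n-k-1}-f_b^{\,n-k}$ on the support, and apply Theorem~\ref{thm:LNbound} pointwise against the nonnegative weight $\tfrac bn f_b^{\,n-k-1}$. Dilating $R$ at fixed $\epsilon$ is equivalent to the paper's variation of $\epsilon$ at fixed $R$, because $C_{Nkl}$ depends only on $R/\epsilon$. You are right to read the exponent $n+k$ as $-(n-k)$; the paper's own line $C_k^{\,n-k-1}C_{k,b}=b^{-1}(C_k^{\,n-k}-C_{k+1}^{\,n-k-1})$ confirms this. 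You are also right that the virial sum for $C$ runs over the free--free pairs of $W^{**}_{Nl}$, not the cross pairs printed in \eqref{6.21}.

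Where you genuinely differ is convexity. The paper differentiates twice and symmetrizes over two copies $x,y$, exhibiting $C_k^{\,2n-2k-1}C_{k,bb}$ as the nonnegative integral \eqref{6.20}. You instead note that $b\mapsto f_b$ is pointwise convex and that $C_{Nkl}$ is the $L^{n-k}$ norm of $f_b$ for a probability measure, so monotonicity of the norm plus Minkowski gives convexity directly. Your route needs no differentiability in $b$ and no control of $\mathrm{pos}(\cdot)^{n-k-2}$ near the cutoff $\{bW=n\}$, and it works for every real $n-k\ge1$. The paper's route gives an explicit formula for $C_{bb}$, and its symmetrization template is reused in Lemma~\ref{lemmaClimit} to get convexity of $f_{N,l}$ in $k$.

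One bookkeeping correction in your last step. After dividing by $C^{\,n-k}_{Nkl}$, the $W^{**}$ term becomes $C^{-(n-k)}_{Nkl}C^{\,n-k-1}_{N,k+1,l}-1=-b\,C^{-1}\partial_bC$, not $1-b\,C^{-1}\partial_bC$ as you wrote. Only the $L_1$ term carries the factor $1-b\,C^{-1}\partial_bC$. Taken literally, your version would add a spurious constant $\tfrac{\delta+1}{3}$ to the right side of \eqref{6.17c}.

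Also, no sign information on $b\,\partial_bC-C$ is needed. In any case, $b\,\partial_bC\le C$ follows from $C_{N,k+1,l}\ge0$ in the identity, not from convexity.
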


\begin{proof}
We prove the statements for the functions $C_{Nkl}$; the argument for $D_{Nkl}$ is analogous.
To simplify notation we suppress the indices $N$ and $l$ throughout the proof.

Differentiating the representation of C in \eqref{5.19a} with respect to $b$ and dividing by $n-k$
gives
\begin{equation}
C_k^{\,n-k-1} C_{k,b}
=
r^{-N+l}
\int_{R\times\cdots\times R}
\left[\operatorname{pos}\!\left(1-\frac{b}{n}W_{Nl}^{**}\right)\right]^{n-k-1}
\left(-\frac{1}{n}W_{Nl}^{**}\right)
\,dx_{l+1}\cdots dx_N ,
\tag{6.18}\label{6.18}
\end{equation}
which can also be written as
\[
C_k^{\,n-k-1} C_{k,b}
=
b^{-1}\left(C_k^{\,n-k}-C_{k+1}^{\,n-k-1}\right).
\]

Differentiating \eqref{6.18} once more and multiplying by $C_k^{\,n-k}$ yields
\begin{align}
C_k^{\,2n-2k-1} C_{k,bb}
+
(n-k-1)\bigl(C_k^{\,n-k-1}C_{k,b}\bigr)^2
&=
C_k^{\,n-k} r^{-N+l}
\int_{R\times\cdots\times R}
(n-k-1)
\left[\operatorname{pos}\!\left(1-\frac{b}{n}W_{Nl}^{**}\right)\right]^{n-k-2}
\nonumber\\
&\qquad\times
\left(-\frac{1}{n}W_{Nl}^{**}\right)
dx_{l+1}\cdots dx_N .
\tag{6.19}\label{6.19}
\end{align}

Now in \eqref{6.19} we replace $C_k^{\,n-k}$ on the right-hand side by an
integral representation of the form \eqref{5.19a} in variables
$y_{l+1},\dots,y_N$. 
We then symmetrize the resulting expression in the variables $x$ and $y$.
Finally we transpose the second term on the left-hand side of \eqref{6.19}
to the right-hand side and replace it by a product of integrals of the form
\eqref{6.18}, one involving the $x$ variables and the other the $y$ variables.
After simplification we obtain

\begin{align}
C_k^{\,2n-2k-1} C_{k,bb}
&=
\frac{n-k-1}{2}\, r^{-2N+2l}
\int_{R\times\cdots\times R}
\left[\operatorname{pos}\!\left(1-\frac{b}{n}W_{Nl}^{**}(x)\right)\right]^{n-k-2}
\nonumber\\
&\qquad\times
\left[\operatorname{pos}\!\left(1-\frac{b}{n}W_{Nl}^{**}(y)\right)\right]^{n-k-2}
\nonumber\\
&\qquad\times
\Bigg\{
\left(1-\frac{b}{n}W_{Nl}^{**}(x)\right)\frac{W_{Nl}^{**}(y)}{n}
-
\left(1-\frac{b}{n}W_{Nl}^{**}(y)\right)\frac{W_{Nl}^{**}(x)}{n}
\Bigg\}^2
dx\,dy .
\tag{6.20}\label{6.20}
\end{align}

The integrand in \eqref{6.20} is non–negative, and therefore $C_k$ is convex in $b$.
\eqref{6.17b} follows directly from \eqref{6.18}.

Since $ND\epsilon^3=\rho r$ in our notation, the only way $\rho$ can vary
(for fixed $N$ and $R$) is through $\epsilon$. Thus
\[
\rho\,\frac{\partial C_k}{\partial \rho}
=
\frac{1}{3}\epsilon\,\frac{\partial C_k}{\partial \epsilon}.
\]

Carrying out the differentiation gives
\begin{align}
\rho C_k^{\,n-k-1}C_{k,\rho}
&=
\frac{1}{3} r^{-N+l}
\int_{R\times\cdots\times R}
\left[\operatorname{pos}\!\left(1-\frac{b}{n}W_{Nl}^{**}\right)\right]^{n-k-1}
\nonumber\\
&\qquad\times
\left\{
b\,\frac{1}{n}
\sum_{i=1}^{l}\sum_{p=l+1}^{N}
|x_i-x_p|\,\Phi'(|x_i-x_p|/\epsilon)
\right\}
dx_{l+1}\cdots dx_N .
\tag{6.21}\label{6.21}
\end{align}

Using Theorem~\ref{thm:LNbound} together with \eqref{5.18} we obtain the estimate
\begin{align*}
\left|\rho C_k^{\,n-k-1}C_{k,\rho}\right|
&\le
\frac{\delta+1}{3} r^{-N+l}
\int_{R\times\cdots\times R}
\left[\operatorname{pos}\!\left(1-\frac{b}{n}W_{Nl}^{**}\right)\right]^{n-k-1}
\left(\frac{b}{n}W_{Nl}^{**}\right)
dx
\nonumber\\
&\qquad
+
\frac{2}{9} L_1 b\,C_{k+1}^{\,n-k-1},
\end{align*}
from which the desired inequality \eqref{6.17c} follows.
\end{proof}

\begin{lemma}[6.3]\label{lemmaconvex}
    Let $f_p(x)$ be convex functions on $[a,b]$ which converge
uniformly to a function $f(x)$.
Assume $f'(x_0)$ exists for some $x_0\in(a,b)$ and that
\[
x_0+h_p\in[a,b], \qquad h_p\to0 .
\]
Then
\[
\lim_{p\to\infty}
\frac{f_p(x_0+h_p)-f_p(x_0)}{h_p}
=
f'(x_0).
\]
What's more, if $f_p'(x_0)$ exists for all $p$, then
\[
\lim_{p\to\infty}f_p'(x_0)=f'(x_0).
\]
\end{lemma}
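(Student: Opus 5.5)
The plan is to squeeze the difference quotients of $f_p$ between difference quotients of the limit $f$ at small fixed step sizes. We use only two facts: convexity (monotonicity of slopes) and uniform convergence. Fix $\eta>0$. Since $f'(x_0)$ exists, choose $\delta>0$ with $[x_0-\delta,x_0+\delta]\subset[a,b]$ and
\[
\left|\frac{f(x_0+\delta)-f(x_0)}{\delta}-f'(x_0)\right|<\eta,\qquad
\left|\frac{f(x_0)-f(x_0-\delta)}{\delta}-f'(x_0)\right|<\eta .
\]
Note that $f$ is itself convex as a uniform limit of convex functions, although we will not need this beyond the existence of $f'(x_0)$.

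The key step is the three-slope inequality for convex functions. For $0<|h_p|\le\delta$ (true for $p$ large, since $h_p\to0$), we have
\[
\frac{f_p(x_0)-f_p(x_0-\delta)}{\delta}
\;\le\;
\frac{f_p(x_0+h_p)-f_p(x_0)}{h_p}
\;\le\;
\frac{f_p(x_0+\delta)-f_p(x_0)}{\delta}.
\]
This holds whatever the sign of $h_p$, because the slope of a chord through the fixed point $x_0$ is nondecreasing in the other endpoint. By uniform convergence, the two outer quotients converge as $p\to\infty$ to the corresponding quotients of $f$; the step $\delta$ is fixed, so dividing by $\delta$ is harmless. Hence
\[
f'(x_0)-\eta\le\liminf_{p}\frac{f_p(x_0+h_p)-f_p(x_0)}{h_p}\le\limsup_{p}\frac{f_p(x_0+h_p)-f_p(x_0)}{h_p}\le f'(x_0)+\eta .
\]
Letting $\eta\to0$ gives the first claim. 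The case $h_p=0$ is excluded implicitly, since the quotient is undefined there; if needed, one restricts to the indices with $h_p\neq0$.

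For the second claim, if $f_p'(x_0)$ exists, then convexity gives the same sandwich with the middle term replaced by $f_p'(x_0)$, because the one-sided derivatives lie between the left and right chord slopes. The identical limiting argument then yields $f_p'(x_0)\to f'(x_0)$. Alternatively, choose $h_p\to0$ so fast that the $p$-th difference quotient is within $1/p$ of $f_p'(x_0)$, and apply the first part.

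The only delicate point is ensuring that the outer slopes use a fixed $\delta$, independent of $p$, before taking $p\to\infty$. If one instead tried to pass directly to the limit in the quotient with step $h_p$, the uniform-convergence error $\|f_p-f\|_\infty/|h_p|$ could blow up. The monotonicity of convex slopes is exactly what lets us avoid that. In the application to Theorem~\ref{Thm:limit}, this lemma will be used with $f_p=C_{Nkl}(\rho,\cdot)$, which is convex in $b$ by Lemma 6.2, to pass from \eqref{6.17b} to the derivative $\partial C/\partial b$ appearing in \eqref{6.5}.
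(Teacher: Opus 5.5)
Your proof is correct and complete. The paper gives no proof of Lemma~\ref{lemmaconvex}: it is quoted from Morrey and invoked only inside the proof of Lemma~\ref{lemmaClimit}. So there is no competing argument to compare with, and your argument supplies exactly what the paper omits.

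That argument is the standard one: you squeeze the step-$h_p$ chord slope of $f_p$ at $x_0$ between the left and right chord slopes at a $p$-independent step $\delta$, let $p\to\infty$, and then let $\delta\to 0$. The second claim is also handled correctly, since the one-sided derivatives of a convex function lie between the left and right chord slopes. Your treatment of indices with $h_p=0$ is right too.

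As you note, fixing $\delta$ before letting $p\to\infty$ is the essential point. It also means the argument only uses convergence of $f_p$ at the three points $x_0$ and $x_0\pm\delta$.
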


\begin{lemma}[6.4]\label{lemmaClimit}
    From any subsequence of the integers $N$ one may extract a further subsequence for which the functions $C_{N kl}(\rho,b)$ converge uniformly on compact subsets of the quadrant $\rho>0, \;b\ge0$ to a \emph{limiting function} $C(\rho,b)$.

The limit function $C$ is continuous, convex in $b$, and satisfies
a uniform Lipschitz bound on compact subsets of $\rho>0$, $b>0$.

Moreover, if
\[
\rho_0>0, \qquad b_0>0,
\qquad C(\rho_0,b_0)>0,
\]
and if $\frac{\partial C}{\partial b}(\rho_0,b_0)$ exists, then

\begin{equation}
\lim_{N\to\infty}
\frac{C^{\,n-k}_{N kl}(\rho_0,b_0)}
{C^{\,n}_{N0l}(\rho_0,b_0)}
=
\left[
1-\frac{b_0}{C(\rho_0,b_0)}\frac{\partial C}{\partial b}(\rho_0,b_0)
\right]^k .
\tag{6.22}\label{6.22}
\end{equation}
Furthermore,
\begin{equation}
\lim_{N\to\infty}
\left[
\frac{C_{N kl}(\rho_0,b_N')}
{C_{N kl}(\rho_0,b_0)}
\right]^{n-k}
=
\exp\!\left\{
\frac{\beta}{C(\rho_0,b_0)} \frac{\partial C}{\partial b}(\rho_0,b_0)
\right\},
\tag{6.23}\label{6.23}
\end{equation}
whenever
\[
\lim_{N\rightarrow\infty} N(b_N'-b_0)=\beta .
\]
\end{lemma}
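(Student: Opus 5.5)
The plan is to get compactness from the a priori bounds of Lemma~\ref{lem:bounds} together with the convexity in $b$ from the second lemma. I then identify the two ratio limits \eqref{6.22} and \eqref{6.23} through the logarithmic-derivative identity \eqref{6.17b} and Lemma~\ref{lemmaconvex}.

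\textbf{Step 1: equicontinuity.} By \eqref{6.7}, $C_{Nkl}=(C^{\,n-k}_{Nkl})^{1/(n-k)}$ lies between $(1/12)^{1/(n-k)}[1-\tfrac23\phi(h)]^{n/(n-k)}$ and $1+\tfrac23 bKL$, so it is locally uniformly bounded. Convexity in $b$ then gives a local Lipschitz bound in $b$ on compact subsets of $b>0$, with the slope controlled by the values at neighbouring points. For the $\rho$-direction I use \eqref{6.17c}: it bounds $|\rho\,\partial_\rho\log C_{Nkl}|$ by $\tfrac29L_1b$ plus a multiple of $|b\,\partial_b\log C_{Nkl}|$. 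The latter is locally bounded by convexity and the lower bound on $C_{Nkl}$, which is positive for $\rho$ small. This gives a uniform Lipschitz bound in $(\rho,b)$.

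\textbf{Step 2: extraction of $C$.} Arzel\`a--Ascoli and a diagonal argument over an exhaustion by compacts give a subsequence with $C_{Nkl}\to C$ locally uniformly. Convexity in $b$ and the Lipschitz bounds pass to the limit. To see that the limit does not depend on $k,l$, I use \eqref{6.15}, which reduces to $l=0$ with $\rho(1-l/N)\to\rho$, together with the fact that the exponent shift $k$ changes the normalization only by a factor $1+O(1/n)$.

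\textbf{Step 3: proof of \eqref{6.23}.} Write
\[
(n-k)\log\frac{C_{Nkl}(\rho_0,b_N')}{C_{Nkl}(\rho_0,b_0)}=\frac{n-k}{N}\,N(b_N'-b_0)\cdot\frac{\log C_{Nkl}(b_N')-\log C_{Nkl}(b_0)}{b_N'-b_0}.
\]
Since $C_{Nkl}\to C>0$ uniformly, $\log C_{Nkl}$ are eventually well defined near $b_0$. The difference quotients of $C_{Nkl}$ converge to $\partial_bC(\rho_0,b_0)$ by Lemma~\ref{lemmaconvex}, applied to the convex $C_{Nkl}$ with $h_N=b_N'-b_0\to0$. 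Hence the log quotient tends to $\partial_bC/C$. The factor $(n-k)/N$ tends to $3/2$, so this yields the exponent $\beta\,\partial_bC/C$ up to the constant $3/2$ absorbed in the normalization of $\beta$ (I would check this bookkeeping against \eqref{5.18}).

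\textbf{Step 4: proof of \eqref{6.22}.} Rewrite \eqref{6.17b} as
\[
\frac{C^{\,n-k-1}_{N,k+1,l}}{C^{\,n-k}_{Nkl}}=1-b\,C^{-1}_{Nkl}\partial_bC_{Nkl}.
\]
By the second part of Lemma~\ref{lemmaconvex}, the right side tends to $1-b_0\partial_bC/C$ at $(\rho_0,b_0)$, since $C_{Nkl}$ is differentiable in $b$ for finite $N$. Telescoping over $j=0,\dots,k-1$ gives \eqref{6.22} for integer $k$. The index convention in \eqref{6.17b} is garbled in the statement, so I would state the identity carefully first. For noninteger $k$ I would use log-convexity of $k\mapsto C^{\,n-k}_{Nkl}$ (H\"older on the integrand power) to interpolate.

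\textbf{Main obstacle.} The hardest step is Step 1, specifically the uniform positive lower bound needed so that $\log C_{Nkl}$ and the $\rho$-Lipschitz estimate are controlled. Lemma~\ref{lem:bounds} only gives positivity for small $\rho$, or under $\Phi\le0$ for large $r$ via \eqref{6.8}. Away from that regime I would restrict to neighbourhoods where $C(\rho,b)\ge\gamma$, as the hypothesis of Theorem~\ref{Thm:limit} does.
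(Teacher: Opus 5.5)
Your route is the paper's. It uses the two-sided bounds \eqref{6.7}, convexity in $b$ and the $\rho$-estimate \eqref{6.17c} for compactness, and \eqref{6.15}/\eqref{6.24} to remove $l$. It then telescopes \eqref{6.17b} into \eqref{6.28}; your corrected form $C^{\,n-k-1}_{N,k+1,l}/C^{\,n-k}_{Nkl}=1-b\,C^{-1}_{Nkl}\partial_bC_{Nkl}$ is the right reading. Convexity of $k\mapsto\log C^{\,n-k}_{Nkl}$ (H\"older) handles non-integer $k$ such as $k_0=\frac52$, and Lemma~\ref{lemmaconvex} gives \eqref{6.23}. Three points need repair.

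\textbf{The ``main obstacle'' is self-inflicted.} Do not pass through $\log C_{Nkl}$ in the $\rho$-direction. Multiply \eqref{6.17c} through by $C_{Nkl}$, as the paper does (its citation of \eqref{6.17b} at that point should read \eqref{6.17c}), to get
\[
|\rho\,\partial_\rho C_{Nkl}|\le \tfrac29L_1b\,C_{Nkl}+\Bigl[\tfrac29L_1b+\tfrac{\delta+1}{3}\Bigr]\,b\,|\partial_bC_{Nkl}|.
\]
The right side is controlled by $0\le C_{Nkl}\le1+\frac23bKL$ and the convexity bound on $|\partial_bC_{Nkl}|$; neither needs a lower bound. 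As written, your Step 1 gives the Lipschitz bound only where \eqref{6.7} supplies positivity (small $\rho$), which is less than the lemma asserts. Your fallback of restricting to $\{C\ge\gamma\}$ is circular, because a property of the limit cannot supply the equicontinuity used to extract it. Positivity is needed only at $(\rho_0,b_0)$, where it is a hypothesis.

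\textbf{The endpoint $b=0$ and the $k$-shift.} Convexity gives no control at $b=0$, yet the lemma claims convergence on compact subsets of $b\ge0$. Use that $C_{Nkl}(\rho,0)=1$ and that the two bounds on $C_{Nkl}$ from \eqref{6.7} both tend to $1$ as $b\to0^+$, $N\to\infty$, uniformly for $\rho$ in compacts. This holds because $h=\rho\tau_0(b)/D\to0$ and $(1/12)^{1/(n-k)}\to1$.

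Your ``$1+O(1/n)$'' claim for the $k$-shift in Step 2 also needs proof, and the H\"older argument you already use supplies it. Take $F=\mathrm{pos}(1-\frac bnW^{**}_{Nl})\le M:=1+\frac23bKL$ and the probability measure $r^{-N+l}dx_{l+1}\cdots dx_N$. For $k\ge0$,
\[
\|F\|_{L^{n-k}}\le\|F\|_{L^n},\qquad \int F^n\le M^k\int F^{n-k},
\]
with the inequalities reversed for $k<0$, which occurs in \eqref{6.15}. Hence $M^{-k/(n-k)}C_{N0l}^{\,n/(n-k)}\le C_{Nkl}\le C_{N0l}$, so $C_{Nkl}-C_{N0l}\to0$ uniformly. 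This is the content of \eqref{6.26}--\eqref{6.27}.

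\textbf{The factor $3/2$.} Your bookkeeping is right and the statement is at fault. Under $N(b_N'-b_0)\to\beta$ your computation gives $\exp\{\frac32\beta\,C^{-1}\partial_bC\}$, so \eqref{6.23} as printed is false whenever $\beta\,\partial_bC\neq0$. The hypothesis must be $n(b_N'-b_0)\to\beta$ with $n=\frac{3N}{2}$. That is how the lemma is used in Theorem~\ref{Thm:limit}: there $B'=B/(1-\frac Bn(W_{Nl}+U_{Nl}))$ gives $n(B'-B)\to B^2(W_l+V_l)$, the exponent in \eqref{6.30}. State this as a correction rather than absorbing the constant. With it, your Step 3 goes through unchanged, since $(n-k)/n\to1$.
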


\begin{proof}
The functions $C_{Nkl}(\rho,b)$ are convex in $b$ for $b\ge0$ when $\rho>0$,
and they satisfy the bounds given in \eqref{6.7} and \eqref{6.8}. 
Hence the quantities $C_{Nkl,b}$ are bounded on every compact subset of the
open quadrant $\rho>0$, $b>0$.

Multiplying relation \eqref{6.17b} by $C_{Nkl}$ shows that the derivatives
$C_{Nkl,\rho}$ are also bounded. 
Furthermore, the bounds \eqref{6.7} imply that the functions $C_{Nkl}$ are
equicontinuous on compact subsets of the axis $b=0$, $\rho>0$.

Therefore, for each pair $(k,l)$ there exists a subsequence along which
$C_{Nkl}$ converges uniformly on compact subsets to a limit function,
which we denote by $C_{kl}$.

A direct inspection of the definition \eqref{5.19a} shows that
\begin{equation}
C_{Nkl}(\rho,b)
=
C_{N-l,k-\frac{3l}{2},0}\!\left(\rho\!\left(1-\frac{l}{N}\right),b\right).
\tag{6.24}\label{6.24}
\end{equation}
Consequently $C_{kl}=C_{k-\frac{3l}{2},0}.$

Next we introduce
\begin{equation}
f_{N,l}(k;\rho,b)
=
\log
\left[
\frac{C_{Nkl}^{\,n-k}(\rho,b)}{C_{N0l}^{\,n}(\rho,b)}
\right].
\tag{6.25}\label{6.25}
\end{equation}

Differentiating with respect to $k$ yields
\begin{align}
\frac{\partial f_{N,l}}{\partial k}
&=
C_{Nkl}^{\,-n+k}\,
r^{-N+l}
\int_{R\times\cdots\times R}
\left[
\operatorname{pos}
\!\left(
1-\frac{b}{n}W_{Nl}^{**}
\right)
\right]^{n-k}
\nonumber\\
&\qquad\times
\left[
-\log
\!\left(
1-\frac{b}{n}W_{Nl}^{**}
\right)
\right]
dx
\tag{6.26}\label{6.26}
\end{align}

and therefore
\[
\frac{\partial f_{N,l}}{\partial k}
\ge
-\log(1+2L_1b/3),
\]
for all $k$, by Theorem~\ref{thm:LNbound}.

Since $C_{Nkl}$ is non-increasing in $k$ (by Hölder's inequality), 
relations \eqref{6.25}–\eqref{6.26} imply
\begin{align*}
-k\log(1+2L_1b/3)
&\le
f_{N,l}
\le
-k\log C_{N0l},
\qquad k\ge0,
\\
-k\log C_{N0l}
&\le
f_{N,l}
\le
-k\log(1+2L_1b/3),
\qquad k\le0.
\tag{6.27}\label{6.27}
\end{align*}

In particular, if $C_{00}(\rho_0,b_0)>0$ then the functions $C_{k0}$ are all
identical. The same conclusion holds if $0$ is replaced by any negative
value $k_2$. Since $C_{Nkl}$ is non-increasing in $k$, it follows that
all $C_{k0}$ coincide whenever one of them vanishes.

Differentiating \eqref{6.26} again with respect to $k$ and proceeding
as in \eqref{6.19}–\eqref{6.20}, we obtain
\begin{align*}
C_{Nkl}^{\,2n-2k}
\frac{\partial^2 f_{N,l}}{\partial k^2}
&=
\frac{1}{2}\,
r^{-2N+2l}
\int_{R\times\cdots\times R}
\left[
\operatorname{pos}
\!\left(
1-\frac{b}{n}W_{Nl}^{**}(x)
\right)
\right]^{n-k}
\nonumber\\
&\qquad\times
\left[
\operatorname{pos}
\!\left(
1-\frac{b}{n}W_{Nl}^{**}(y)
\right)
\right]^{n-k}
\nonumber\\
&\qquad\times
\left\{
\log\!\left(1-\frac{b}{n}W_{Nl}^{**}(x)\right)
-
\log\!\left(1-\frac{b}{n}W_{Nl}^{**}(y)\right)
\right\}^{2}
dx\,dy .
\end{align*}

Hence $f_{N,l}$ is convex in $k$.
Using \eqref{6.17b} and \eqref{6.25} we also have
\begin{equation}
f_{N,l}(k;\rho,b)
=
\sum_{i=0}^{k-1}
\log
\left(
1
-
b\,C_{Ni,l}^{-1}\frac{\partial C_{Ni,l}}{\partial b}
\right),
\tag{6.28}\label{6.28}
\end{equation}
for every integer $k$.

Evaluating this expression at $(\rho_0,b_0)$ and passing to the limit
$N\to\infty$, we obtain
\begin{equation}
    \lim_{N\to\infty}
f_{N,l}(k;\rho_0,b_0)
=
k\log
\left[
1-
\frac{b_0 \frac{\partial C}{\partial b}(\rho_0,b_0)}{C(\rho_0,b_0)}
\right]. \tag{6.29}\label{6.29}
\end{equation}

From this relation equation \eqref{6.22} follows, while \eqref{6.23}
is a consequence of the convexity of the functions $C_{Nkl}$ together
with Lemma \ref{lemmaconvex}.
\end{proof}

\subsection{Proof of Theorem \ref{Thm:limit} with certain gaps}
\begin{proof}
We shall leave a few gaps in the end of this proof but present the main ideas.

From \eqref{6.3}, it follows immediately that the functions $D_{Nkl}$
converge uniformly to $C$ as $N$ runs through the given subsequence.
Moreover, \eqref{6.22} and \eqref{6.23} remain valid with $C_{Nkl}$
replaced by $D_{Nkl}$. In \eqref{6.23}, the parameter $\beta$ may depend
on $x,\xi_1,\dots,\xi_{l-1}$, and uniform convergence is preserved.

Accordingly,
\begin{equation}
\lim_{N\to\infty}
\left[
\frac{C_{Nk0}(\rho_0,B_0')}{C_{Nk0}(\rho_0,B_0)}
\right]^{n-k}
=
\exp\!\left\{
\frac{B_0^2 (W_l + V_l)}{C(\rho_0,B_0)}\,\frac{\partial C}{\partial b}(\rho_0,B_0)
\right\}
=
\lim_{N\to\infty}
\left[
\frac{D_{Nkl}(x,\xi_1,\dots,\xi_{l-1};\rho_0,B_0')}
{D_{Nkl}(x,\xi_1,\dots,\xi_{l-1};\rho_0,B_0)}
\right]^{n-k},
\tag{6.30}\label{6.30}
\end{equation}
where $B_0'$ is defined as in \eqref{5.20}. The convergence is uniform on
restricted sets.

From our hypotheses and \eqref{5.22}, \eqref{6.23}, etc., it follows that
\eqref{6.4} is uniformly bounded for some $\beta>0$. From \eqref{6.30},
we deduce that $\phi_{Nl}/\tilde{\phi}_{Nl} \to 1$ uniformly if
$\tilde{\phi}_{Nl}$ is defined as in \eqref{5.22} with $B'$ replaced by $B_0$
in the last two factors.

From \eqref{6.22}, \eqref{6.23}, and standard arguments, it follows that
the coefficient of
$g_{Nkl}(x,\xi_1,\dots,\xi_{l-1};\rho_0,B_0)$ in $\phi_{Nl}$ converges
uniformly on restricted sets to
\begin{equation}
\left(\frac{A_0}{\pi}\right)^{3l/2}
\exp\{-A_0(V_l + W_l)\}\,\rho^l,
\qquad
A_0 = B_0\left(1 - \frac{B_0}{C}\frac{\partial C}{\partial b}\right).
\tag{6.31}\label{6.31}
\end{equation}

It remains to consider the functions $g_{Nkl}$. Viewing them first as
functions of $x_1,\dots,x_l$ and using symmetry in the remaining variables,
we obtain
\begin{align}
g_{Nkl,i}^{\alpha}
&=
-2K B_0\Bigl(1-\frac{k}{n}\Bigr)(N-l)
C_{Nk0}^{-n+k}(\rho_0,B_0)
\nonumber\\
&\quad\times r^{-N+l}
\int_{R\times\cdots\times R}
\Bigl[
\operatorname{pos}\Bigl(
1-\frac{B_0}{n}\Omega_{Nl}
-\frac{B_0}{n}W^{**}_{N,l+1}
-\frac{B}{n}W^{*}_{N,l+1}
\Bigr)
\Bigr]^{n-k-1}
\nonumber\\
&\quad\times
\Phi'(|x_{l+1}-x_i|/\epsilon)
\frac{x_{l+1}^{\alpha}-x_i^{\alpha}}{|x_{l+1}-x_i|}
\,dx,
\tag{6.32}\label{6.32}
\end{align}
where
\[
\Omega_{Nl}
=
2 \sum_{j=1}^l \Phi(|x_j - x_{l+1}|/\epsilon).
\]

Let $H>0$ be fixed and define $\Sigma_{1N} = \{\Omega_{Nl} > H\}$ and
$\Sigma_{2N} = R \setminus \Sigma_{1N}$.

For $x_{l+1} \in \Sigma_{1N}$, the integrand in \eqref{6.32} satisfies
\begin{align}
&\le
A_{n,k+1}
\Bigl[
\operatorname{pos}\Bigl(
1-\frac{b}{n}W^{**}_{N,l+1}
-\frac{b}{n}W^{*}_{N,l+1}
\Bigr)
\Bigr]^{n-k-1}
\nonumber\\
&\quad\times
\exp\{-\beta_N \Omega_{Nl}\}
\cdot \Phi'(|x_i - x_{l+1}|/\epsilon)
\nonumber\\
&\le
\varepsilon(H)\,A_{n,k+1}
\Bigl[
\operatorname{pos}\Bigl(
1-\frac{b}{n}W^{**}_{N,l+1}
-\frac{b}{n}W^{*}_{N,l+1}
\Bigr)
\Bigr]^{n-k-1},
\tag{6.33}\label{6.33}
\end{align}
where $\varepsilon(H)\to 0$ as $H\to\infty$, uniformly.

Here
\begin{equation}
\beta_N \le 1 + \frac{2}{3}KL_1b + \frac{K(l+1)mb}{2n},
\tag{6.34}\label{6.34}
\end{equation}
and
\begin{equation}
\frac{1}{n}\bigl(W^{**}_{N,l+1} + W^{*}_{N,l+1} + W_{N,l+1}\bigr)
=
\frac{2}{3}\frac{K}{N}
\sum_{i,k=1}^N \Phi(|x_i-x_k|/\epsilon).
\tag{6.35}\label{6.35}
\end{equation}

Thus the contribution from $\Sigma_{1N}$ is negligible.

For $\Sigma_{2N}$, we rewrite the integral as
\begin{align}
&-2K B_0\Bigl(1-\frac{k}{n}\Bigr)\Bigl(1-\frac{l}{N}\Bigr)
\frac{C_{N,k+1,0}(\rho_0,B_0)}{C_{Nk0}(\rho_0,B_0)}
\frac{\rho}{D}
\nonumber\\
&\quad\times
\int_{R_{N,x}-\Sigma(H)}
g_{N,k+1,l+1}(x,\xi_1,\dots,\xi_l;\rho_0,B')
\Bigl[
\operatorname{pos}\bigl(1-(B_0/n)\Omega_l\bigr)
\Bigr]^{n-k-1}
\nonumber\\
&\quad\times
\left[
\frac{C_{N,k+1,0}(\rho_0,B')}{C_{N,k+1,0}(\rho_0,B_0)}
\right]^{n-k-1}
\Phi'(|\xi_l-\xi_i|)
\frac{\xi_l^\alpha - \xi_i^\alpha}{|\xi_l - \xi_i|}
\,d\xi_l,
\tag{6.36}\label{6.36}
\end{align}
where
\[
B' = \frac{B_0}{1 - n^{-1} B_0 \Omega_l}.
\]

For fixed $\xi_1,\dots,\xi_{l-1}$, we have $B'\to B_0$ uniformly, and
\[
n(B' - B_0) \to B_0 \Omega_l.
\]

Thus the coefficient converges to
\begin{equation}
\Phi'(|\xi_i - \xi_l|)
\frac{\xi_i^\alpha - \xi_l^\alpha}{|\xi_i - \xi_l|}
\exp\{-A_0 \Omega_l\}.
\tag{6.37}\label{6.37}
\end{equation}

Combining \eqref{6.33}, \eqref{6.36}, and \eqref{6.37}, we conclude that $g_{Nkl\xi_j}^\alpha$ are uniformly bounded.

Similarly, we can hope that $g_{Nkl x}$ are also bounded. However, this has not been proved and remains a \emph{gap}. We shall leave it as a conjecture.
\begin{conjecture}\label{conj:deri}
    Under the same assumption as Theorem \ref{Thm:limit}, the $x$-derivatives of $g_{Nkl}$ are uniformly bounded on bounded sets, uniformly in N for fixed $k$ and $l$.
\end{conjecture}

Hence a further subsequence may be extracted, yielding limiting functions $g_{kl}$. 
\end{proof}

\section{A Formal Solution of the correlation $g_l$ in \eqref{6.1}}\label{Sectiongl}
Assuming the Gibbs-type limiting form \eqref{6.1}, the next goal is to determine the correlation functions $g_l$.

Assuming the required uniform convergence of $\phi_{Nl}$ and their derivatives and Conjecture \ref{conj:pi}, we should expect that the functions $\phi_l$
satisfy the infinite sequence of equations obtained formally
by letting $N \to \infty$ and $\epsilon \to 0$ in \eqref{2.5}. More precisely speaking, we want our $\phi_{NN}$ to be \emph{local equilibrium distributions}, which is a strong assumption and guarantees the derivation of fluid dynamics.\cite{DeMasiEtAl1984}

The limit of \eqref{2.5} is an infinite sequence of equations:
\begin{align*}\label{7.1}\tag{7.1}
    &B_{l0}^\alpha(\xi)\,\phi_{l,u^\alpha}
\;+\; \sum_{i=1}^{\,l-1}\big[\,u^\alpha \phi_{l,\xi_i^\alpha}+B_{li}^\alpha(\xi)\,\phi_{l,v_i^\alpha}\big]\\    =&
-
\int_{-\infty}^{\infty}
\bigg\{
\Phi'\big(|\xi_1|\big)\,\frac{\xi_1^\alpha}{|\xi_1|}
\;\phi_{l+1,\;u^\alpha}
\;+\;
\sum_{i=1}^{\,l-1}
\bigg[
\Phi'\big(|\xi_i-\xi_1|\big)\,\frac{\xi_i^\alpha-\xi_1^\alpha}{|\xi_i-\xi_1|}
-
\Phi'\big(|\xi_1|\big)\,\frac{\xi_1^\alpha}{|\xi_1|}
\bigg]\,
\phi_{,l+1,\;v_i^\alpha}
\bigg\}\,d\xi_1\,dv_1,
\end{align*}

\vspace{0.5em}
\noindent
The limiting equations \eqref{7.1} are seen to possess an infinite variety of solutions.  It is therefore necessary to select only those solutions which could represent limiting physical distributions. Since $t$ and $x$ do not explicitly appear in the resulting system, the solutions of interest will correspond to equilibrium states.
For the remainder of this paper, we shall restrict our attention
to the gaseous and liquid states at most.

\vspace{0.5em}
\noindent
\begin{theorem}\label{thm:gl}
    Recall from \eqref{6.1}, the functions $\phi_l$ satisfy those equations \eqref{7.1}
if and only if the $g_l$ satisfy
\begin{equation}
\frac{\partial g_l}{\partial \xi_i^\alpha}
  = -\frac{\rho}{D} \int_{-\infty}^{\infty}
    g_{l+1} \, \frac{\partial}{\partial \xi_i^\alpha}
    \bigl[ 1 - e^{-2A\Omega_l} \bigr] \, d\xi_{l},
  \qquad
  \int_{-\infty}^{\infty}
    g_{l+1} \, \frac{\partial}{\partial \xi_l^\alpha}
    \bigl[ 1 - e^{-2A\Omega_l} \bigr] \, d\xi_l = 0,
  \tag{7.3}\label{7.3}
\end{equation}
where
\[
  \Omega_l = \sum_{j=0}^{l-1} \Phi(\|\xi_l - \xi_j\|),
  \qquad
  \xi_0 = 0, \quad j = 0, \dots, l-1,
\]
\end{theorem}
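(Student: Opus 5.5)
Substitute the Gibbs ansatz \eqref{6.1} for $\phi_l$ and $\phi_{l+1}$ into the limiting hierarchy \eqref{7.1}, and separate the resulting identity according to its dependence on the velocity variables. Write $\phi_l=M_l\,E_l\,\rho^l g_l$, where
\[
M_l=(A/\pi)^{3l/2}\exp\Big\{-A\sum_{i=0}^{l-1}\|u+v_i-\bar u\|^2\Big\},\qquad E_l=e^{-2A\mathcal X_l}.
\]
The key observation is that $\partial_{u^\alpha}M_l=-2A\sum_i(u+v_i-\bar u)^\alpha M_l$ and $\partial_{v_i^\alpha}M_l=-2A(u+v_i-\bar u)^\alpha M_l$. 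The transport term $u^\alpha\partial_{\xi_i^\alpha}$ hits only $E_l g_l$. In the relative coordinates \eqref{2.3} the streaming coefficient should really be the relative velocity $v_i^\alpha$, and I will use it in that form. Then the left-hand side of \eqref{7.1} becomes
\[
M_l\rho^l\Big[-2A\sum_i (\cdots)\cdot B\,E_l g_l+\sum_i v_i^\alpha\,\partial_{\xi_i^\alpha}(E_lg_l)\Big].
\]
By \eqref{2.6}, $\sum_i B_{li}^\alpha=-B_{l0}^\alpha$ and $B_{l0}+B_{li}=-\partial_{\xi_i}\mathcal X_l$. These identities make the velocity-weighted force terms combine into $2A\sum_i v_i^\alpha\,\partial_{\xi_i^\alpha}\mathcal X_l\,E_lg_l$, which exactly cancels the part of the transport term in which the derivative falls on $E_l$. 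What survives on the left is $M_l\rho^lE_l\sum_i v_i^\alpha\,\partial_{\xi_i^\alpha}g_l$. Verifying this cancellation, together with the bookkeeping of the $u$-component (the $(u-\bar u)$-terms cancel by $\sum B=0$), is the first step.

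Next, on the right-hand side, integrate $\phi_{l+1}$ over the extra velocity $v_l$. The Maxwellian factor integrates out, and derivatives in $u$ or $v_i$ of $M_{l+1}$ produce the same linear velocity factors as above. The extra particle's velocity integrates to zero against its centered Gaussian, so after integration the right-hand side is $M_l\rho^{l+1}E_l$ times a linear form $\sum_i v_i^\alpha\,(\cdot)$ plus a term multiplying $(u-\bar u)^\alpha$. The factor $E_{l+1}/E_l=e^{-2A\Omega_l}$ appears with $\Omega_l$ as defined in the statement, and the force of the new particle on particle $i$ is $-\partial_{\xi_i}\Phi(|\xi_l-\xi_i|)$. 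This gives
\[
\Phi'\,e^{-2A\Omega_l}=\frac{1}{2A}\,\partial_{\xi_i}\bigl[1-e^{-2A\Omega_l}\bigr].
\]
The prefactor $2A$ cancels the $2A$ from differentiating the Maxwellian, and the $\rho/D$ comes from the normalization \eqref{2.1}/\eqref{2.5} with $K/D$ and $m_N=D\epsilon^3$. I need to track these constants carefully. Writing $e^{-2A\Omega_l}=1-(1-e^{-2A\Omega_l})$ is what makes the integrals converge at large $|\xi_l|$, since $g_{l+1}\to g_l$ there by \eqref{6.2}. The pure constant part integrates to a boundary term that vanishes.

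Finally, since the identity must hold for all $u$ and all $v_1,\dots,v_{l-1}$ independently, match coefficients. The coefficients of $v_i^\alpha$, $i=1,\dots,l-1$, give the first equation of \eqref{7.3}. The coefficient of $(u-\bar u)^\alpha$, which couples to the total force on the distinguished particle and, via $\sum_j\partial_{\xi_j}$ together with translation invariance, to $\partial_{\xi_l}$ of the interaction, yields the second, integrability-type condition in \eqref{7.3}. The converse direction follows by reversing each step, since every manipulation is an equivalence once the velocity dependence is factored out.

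I expect the main obstacle to be the right-hand-side computation. Reconciling the paper's $\xi_1$ integration variable in \eqref{7.1} with the new particle $\xi_l$ requires relabeling by symmetry of $\phi_{l+1}$, and I must justify the integration by parts in $\xi_l$: boundary terms at infinity vanish by (6.2) and the decay in Assumption \ref{Phi}, while the singularity at $\xi_l=\xi_j$ is harmless because $1-e^{-2A\Omega_l}\to1$ smoothly. I would first check the case $l=1$ explicitly, where only the second condition appears, to fix all signs and constants before doing general $l$.
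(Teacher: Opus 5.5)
Your direct substitution is sound and proves the statement as written, but it is not the paper's argument. The paper never inserts \eqref{6.1} into \eqref{7.1}. It lets $N\to\infty$ in the exact finite-$N$ formula \eqref{6.32} for the $\xi$-derivatives of $g_{Nkl}$: the region $\Sigma_{1N}$ where $\Omega$ is large is negligible by \eqref{6.33}, and on $\Sigma_{2N}$ the cut-off power converges to the Boltzmann factor \eqref{6.37}. This gives the left equation of \eqref{7.3}. It then uses \eqref{6.22} for $C$ and $D$ to see that the limit is independent of $k$, and sums \eqref{6.32} over $j=0,\dots,l-1$ (translation invariance) to get the right equation.

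That route shows that the particular correlations produced by Theorem~\ref{Thm:limit} satisfy \eqref{7.3} without passing through the hierarchy \eqref{7.1}, whose validity for the constrained marginals rests on Conjecture~\ref{conj:pi}. It does not, however, prove the ``if and only if'' with \eqref{7.1}. Your computation supplies exactly that equivalence, purely algebraically. Once the Maxwellian is differentiated and $v_l$ is integrated out, both sides are linear forms in $(u-\bar u,v_1,\dots,v_{l-1})$ with $\xi$-dependent coefficients and a common positive factor $M_lE_l\rho^l$. Matching coefficients is therefore reversible and needs none of the Section~\ref{Section:limit} machinery. The two arguments are complementary: together they would show that the constructed $\phi_l$ solve \eqref{7.1}. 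You were right to replace $u^\alpha\phi_{l,\xi_i^\alpha}$ by $v_i^\alpha\phi_{l,\xi_i^\alpha}$; with $u^\alpha$ the cancellation on the left fails.

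Three points to fix when you write it out.

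First, the $(u-\bar u)$-terms on the left cancel because $l\,B_{l0}+\sum_{i=1}^{l-1}B_{li}=\sum_{j=0}^{l-1}F_j=0$. Here $F_j=-\partial_{\xi_j}\mathcal X_l$, and the change of variables \eqref{2.3} actually produces $B_{l0}=F_0$ and $B_{li}=F_i-F_0$. The identity $B_{l0}+\sum_iB_{li}=0$ printed in \eqref{2.6} is incompatible with $B_{l0}+B_{li}=-\mathcal X_{l,\xi_i}$ for $l\ge 2$. Used literally, it leaves a residual proportional to $(l-1)(u-\bar u)\cdot B_{l0}$.

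Second, on the right the coefficient of $(u-\bar u)$ is the total force $\sum_{j=0}^{l-1}F_{j,l}$ that the extra particle exerts on the whole group, not only on the distinguished particle. The identity $\sum_{j=0}^{l-1}\partial_{\xi_j}\Omega_l=-\partial_{\xi_l}\Omega_l$ converts it into the second integral in \eqref{7.3}.

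Third, no integration by parts in $\xi_l$ and no splitting $e^{-2A\Omega_l}=1-(1-e^{-2A\Omega_l})$ is needed. The identity $K\,\partial_{\xi_j}\Omega_l\,e^{-2AK\Omega_l}=(2A)^{-1}\partial_{\xi_j}\bigl[1-e^{-2AK\Omega_l}\bigr]$ holds pointwise. The integrals converge absolutely for bounded $g_{l+1}$: near $0$, $|\Phi'|e^{-2AK\Phi}=O(r^{-1})$ by Assumption~\ref{Phi}(i),(iii), and at infinity $\Phi'=o(r^{-s-1})$ with $s>3$. If you track constants you will find the exponent is $2AK\Omega_l$; \eqref{7.3} as printed takes $K=1$.
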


\begin{proof}
    Passing to the limit in \eqref{6.33} and \eqref{6.36}, we see that $g_{kl}$ satisfy the left equation in \eqref{7.3}. Moreover, applying \eqref{6.22}
to both $C$ and $D$, we find that $g_{kl}$ is independent of $k$.

Finally, summing the right-hand side of \eqref{6.32} over $j=0,\dots,l-1$,
we obtain the right equation in \eqref{7.3}.
\end{proof}

This is the main simplification of Section \ref{Sectiongl}. The next lemma explains the structural consequences of \eqref{7.3} and shows what kind of recursive form one should expect for the functions $g_l$. The solution is given by a standard process named Mayer Cluster expansion.\cite{MayerMayer1940}
\begin{lemma}[Lemma 7.1]\label{lemma7.1}
If the functions $g_l$ are symmetric in the indices, are differentiable, and
differentiation under the integral sign is permitted, and if the $g_l$ satisfy
equation~\eqref{7.3}, then, for each $p$, we have
\begin{equation*}
g_{l\xi_i^\alpha}
= -\,\frac{\partial}{\partial \xi_i^\alpha}
  \sum^p_{q=1}\frac{\rho^p}{p!\,D^p}
  \int_{-\infty}^{\infty}
  h_{l p}(\xi; \eta; A)
  g_{l+p,\xi_i\alpha}(\xi, \eta; \rho, A)
  \,d\eta
\end{equation*}\[
+\frac{\rho^p}{p!\,D^p}
  \int_{-\infty}^{\infty}
  h_{l p,\xi_i\alpha}(\xi, \eta; A)
  g_{l+p}(\xi, \eta; \rho, A)
  \,d\eta,
  \label{7.4}\tag{7.4}
\]
where
\begin{align*}
\label{7.5}\tag{7.5}
h_{l p}(\xi_1, \dots, \xi_{l-1}; \eta_1, \dots, \eta_p; A)
&= H_{lp}(\xi_1, \dots, \xi_{l-1}; \eta_1, \dots, \eta_p; A)\,
   K_p(\eta_1, \dots, \eta_p; A),
   \\[0.5em]
K_p(\eta_1, \dots, \eta_p; A)
&= \exp\!\left\{
  -A \sum_{i,k=1}^p \Phi(\|\eta_i - \eta_k\|)
  \right\},
  \qquad K_0 = K_1 = 1, \\[0.5em]
H_{l p}(\xi_1, \dots, \xi_{l-1}; \eta_1, \dots, \eta_p; A)
&= \prod_{r=1}^p h_{l1}(\xi_1, \dots, \xi_{l-1}; \eta_r; A), \\[0.5em]
h_{l1}(\xi_1, \dots, \xi_{l-1}; \eta_1; A)
&= 1 - \exp\!\left\{
  -2A \sum_{i=1}^{l-1} \Phi(\|\xi_i - \eta_1\|)-2A\Phi(\|\xi_i - \eta_1\|)
  \right\},
  \qquad k_{10} = 1.
\end{align*}And $p$ can be any finite number independent of $l$. 
\end{lemma}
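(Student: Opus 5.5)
We prove \eqref{7.4} by induction on $p$, iterating the left equation of \eqref{7.3} and using the right equation of \eqref{7.3} together with the symmetry of the $g_l$ to discard boundary-type terms at each step. The guiding idea is the Mayer cluster mechanism. The bracket $1-e^{-2A\Omega_l}$ in \eqref{7.3} is exactly the one-particle function $h_{l1}(\xi;\eta_1;A)$ of \eqref{7.5}. Here we identify $\Omega_l$ with the interaction between the new variable $\eta_1=\xi_l$ and the existing points $\xi_0=0,\xi_1,\dots,\xi_{l-1}$. With this identification the case $p=1$ is a rewriting of \eqref{7.3}. Since $\partial_{\xi_i^\alpha}\int h_{l1}g_{l+1}\,d\eta_1=\int h_{l1,\xi_i^\alpha}g_{l+1}\,d\eta_1+\int h_{l1}g_{l+1,\xi_i^\alpha}\,d\eta_1$, the left equation of \eqref{7.3} splits into the two terms of \eqref{7.4} with $p=1$: a total $\xi_i$-derivative of $\int h_{l1}g_{l+1}$ and a correction involving $g_{l+1,\xi_i^\alpha}$. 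This is the form that can be iterated.

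For the inductive step we assume \eqref{7.4} at level $p$. The remainder term contains $g_{l+p,\xi_i^\alpha}(\xi,\eta)$, and we substitute for it the left equation of \eqref{7.3} applied at level $l+p$. That is, we write $g_{l+p,\xi_i^\alpha}$ as $-\frac{\rho}{D}\int g_{l+p+1}\,\partial_{\xi_i^\alpha}[1-e^{-2A\Omega_{l+p}}]\,d\eta_{p+1}$, where $\Omega_{l+p}$ couples $\eta_{p+1}$ to all of $\xi_0,\dots,\xi_{l-1},\eta_1,\dots,\eta_p$. The key algebraic identity is the factorisation
\[
1-e^{-2A\Omega_{l+p}}=1-e^{-2A\Omega^{(\xi)}}e^{-2A\Omega^{(\eta)}},
\]
where $\Omega^{(\xi)}$ and $\Omega^{(\eta)}$ denote the interactions of $\eta_{p+1}$ with the $\xi$'s and with the $\eta$'s respectively. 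Only the $\xi$-part depends on $\xi_i$. Combining $h_{l1}(\xi;\eta_{p+1})$ with $\prod_{r\le p}h_{l1}(\xi;\eta_r)$ rebuilds $H_{l,p+1}$. The factor $e^{-2A\Omega^{(\eta)}}$, multiplied into $K_p$, rebuilds $K_{p+1}$; this uses $\sum_{i,k}$ counting each pair twice, which accounts for the factor $2$. After symmetrising over the $p+1$ labels $\eta_1,\dots,\eta_{p+1}$ we obtain the prefactor $\rho^{p+1}/((p+1)!D^{p+1})$. Here the symmetry of $g_{l+p+1}$ is used.

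To peel off a total derivative at each stage, we apply the product rule in $\xi_i$ again. Derivative terms in which $\partial_{\xi_i}$ would fall on the $\eta$-interactions do not depend on $\xi_i$ and so vanish. Terms in which a derivative $\partial_{\eta_r}$ arises, after converting $\partial_{\xi_i}$ on functions of $\xi_i-\eta_r$, are killed by the right equation of \eqref{7.3}, which says the integrated $\eta$-derivative of $g\,\partial[1-e^{-2A\Omega}]$ vanishes, combined with integration by parts in $\eta_r$ and the decay $h_{l1}\to0$ as $\eta_r\to\infty$ given by Assumption~\ref{Phi}. The summation over $q=1,\dots,p$ in \eqref{7.4} then collects the total-derivative contributions from each stage of the iteration.

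We expect the main obstacle to be the bookkeeping at the inductive step, namely verifying that the cross terms generated when $\partial_{\xi_i}$ hits the previously accumulated factors $h_{l1}(\xi;\eta_r)$, $r\le p$, recombine exactly into $h_{l,p+1,\xi_i^\alpha}$ rather than leaving stray terms. We would first check this by hand for $p=2$, where the identity $\partial_{\xi_i}(h(\eta_1)h(\eta_2))=h_{\xi_i}(\eta_1)h(\eta_2)+h(\eta_1)h_{\xi_i}(\eta_2)$ must match the two symmetrised contributions. We would also justify differentiation under the integral through the integrability of $h_{l1}$, which is available because $s>3$.
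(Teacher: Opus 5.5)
Your induction is correct and is the natural argument behind the proof the paper cites from Morrey: iterate the left equation of \eqref{7.3} at levels $l,l+1,\dots,l+p-1$, absorb $e^{-2A\Omega^{(\eta)}}$ into $K_{p+1}$, and symmetrize in the $\eta$-variables. You are also right to read \eqref{7.4} in its intended form $g_{l,\xi_i^\alpha}=-\partial_{\xi_i^\alpha}\sum_{q=1}^p\frac{\rho^q}{q!D^q}\int h_{lq}\,g_{l+q}\,d\eta+\frac{\rho^p}{p!D^p}\int h_{lp}\,g_{l+p,\xi_i^\alpha}\,d\eta$, since the printed version is garbled.

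Note, however, that the right equation of \eqref{7.3} and any integration by parts in $\eta_r$ are superfluous, because no $\partial_{\eta_r}$ terms ever arise. The ``cross terms'' you flag as the main obstacle are disposed of exactly by your symmetrization, which gives $\int H_{lp}\,\partial_{\xi_i^\alpha}h_{l1}(\xi;\eta_{p+1})\,K_{p+1}\,g_{l+p+1}\,d\eta=\frac{1}{p+1}\int h_{l,p+1,\xi_i^\alpha}\,g_{l+p+1}\,d\eta$ before the product rule is applied.
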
 

For proof, see \cite{Morrey1955}{Section 6}.

The result of this lemma suggests trying a solution of \eqref{7.3} of the form
\begin{equation}
\label{7.9}\tag{7.9}
g_l(\xi_1, \dots, \xi_{l-1}; \rho, A)
= E_l(\rho; A)
  - \sum_{p=1}^\infty \frac{\rho^p}{p!\,D^p}
  \int_{-\infty}^{\infty}
  h_{l p} g_{l+p}\,d\eta,
\end{equation}
Let us come to the \textbf{conclusion} first, then move to the derivation: Our \textbf{only} solution $g_l$ which $g_l$ and $g_{l\xi_j^\alpha}$ are bounded and we can pass to the limit of equation \eqref{7.4} (w.r.t $p$) is equation \eqref{7.23}
\[
g_l(\xi_1, \dots, \xi_{l-1}; \rho, A)
= [E(\rho, A)]^l
  \sum_{q=0}^{\infty}
  f_{l q}(\xi_1, \dots, \xi_{l-1}; A)
  [-y(\rho, A)]^q/q!,\]
Where $E$ and $y$ has explicit formula in equation \eqref{7.22} and $f_lq$ has explicit formula in Section 7.

\bigskip
\noindent
\textbf{Derivation}(mostly based on induction)

Based on equation \eqref{7.9} we assume
\begin{equation}
\label{7.10}\tag{7.10}
g_l
= \sum_{p=0}^\infty \frac{\rho^p}{p!\,D^p}\Gamma_{l p}(\xi_1, \dots, \xi_{l-1}; A),
\qquad
E_l(\rho; A) = \sum_{p=0}^\infty C_{l p}(A)\frac{\rho^p}{p!\,D^p}.
\end{equation}
Since we require \eqref{6.2} to hold, we must have
\begin{equation}
\label{7.11}\tag{7.11}
\Gamma_{l 0} = 1,
\qquad
\lim_{\xi_i \to \infty,\,\xi_i - \xi_{l-1}\to \infty} \Gamma_{l p} = 0,
\quad p \ge 1,
\qquad
C_{1 0} = 1.
\end{equation}

Substituting \eqref{7.10} into \eqref{7.9} and equating coefficients of equal powers of
$\rho$, we obtain
\begin{equation}
\label{7.12}\tag{7.12}
\Gamma_{l p}
= C_{l p} - \sum_{q=1}^{p}
  \binom{p}{q}
  \int_{-\infty}^{\infty}
  h_{l q}(\xi_1, \dots, \xi_{l-1}; \eta_1, \dots, \eta_q; A)
  \Gamma_{l+q, p-q}(\xi, \eta; A)\,d\eta.
\end{equation}

From \eqref{7.12}, it follows by induction that
\begin{equation}
\label{7.13}\tag{7.13}
\Gamma_{l p}
= \sum_{q=0}^{p} (-1)^q \binom{p}{q} C_{l+q, p-q} f_{l q},
\qquad
f_{l 0} = 1,
\end{equation}
where
\[
f_{l p}(\xi_1, \dots, \xi_{l-1}; A)
:= \sum_{q=1}^{p} (-1)^{q-1}\binom{p}{q}
  \int_{-\infty}^{\infty}
  h_{l q}(\xi_1, \dots, \xi_{l-1}; \eta_1, \dots, \eta_q; A)
  f_{l+q, p-q}(\eta_1, \dots, \eta_q; A)\,d\eta.
\]

In \cite{Morrey1955}{Section 9}, we will know that
\begin{equation}
\label{7.14}\tag{7.14}
\lim_{\xi_i\to\infty,\;\xi_i-\xi_{l-1}\to\infty}
  f_{l p}(\xi_1, \dots, \xi_{l-1}; A)
  = d_{l p}(A),
\qquad
d_{l+1, p}
= \sum_{q=0}^p \binom{p}{q} d_{l, p-q}(A)d_{1 q}(A),
\qquad f_{1 p} = d_{1 p}.
\end{equation}

In Addition, from \cite{Morrey1955}{Section 7 and 8}, $f_{l p}$ has continuous derivatives with
respect to $\xi_1,\dots,\xi_{l-1}$, and that

\begin{equation}
\begin{aligned}
|f_{l p}(\xi_1, \dots, \xi_{l-1}; A)|
&\le [P(A)]^{l+p}\,[r(A)]^{p-1}\,l\,(l+p)^{p-1},\\
|f_{l\xi_i\alpha}(\xi_1, \dots, \xi_{l-1}; A)|
&\le [P(A)]^{l+p+1}\,[r(A)]^{p-1}\,p\,(l+1)\,(l+p)^{p-2},
\end{aligned}
\label{7.15}\tag{7.15}
\end{equation}

for all $\xi_1, \dots, \xi_{l-1}$, where $r(A)$ and $d(A)$ are as defined in
\cite{Morrey1955}{(8.9)} and $P(A)$ in \cite{Morrey1955}(7.27). We just quote the results to keep the structure consistent.

Since we require $\Gamma_{l p} \to 0$ as $\|\xi_i\|\to\infty$ or
$\|\xi_i - \xi_k\|\to\infty$ for $p\ge1$, we see from \eqref{7.13} and \eqref{7.14} that
\begin{equation}
\label{7.17}\tag{7.17}
\sum_{q=0}^{p}(-1)^q\binom{p}{q}d_{l q}C_{l+q, p-q} = 0, \qquad p \ge 1.
\end{equation}
Equations \eqref{7.17} and \eqref{7.11} determine the $C_{l p}$, so $E_l$ \textbf{uniquely} in terms of the
$d_{l p}$.

Now, using \eqref{7.10}, \eqref{7.11}, and \eqref{7.13}, we find formally that
\begin{equation}
\label{7.18}\tag{7.18}
g_l(\xi_1, \dots, \xi_{l-1}; \rho, A)
= \sum_{q=0}^{\infty}
  E_{l+q}(\rho; A)\,f_{l q}(\xi_1, \dots, \xi_{l-1}; A)\,
  (-\rho)^q/(q!\,D^q).
\end{equation}
Letting $\xi_i \to \infty$ and $\|\xi_i - \xi_k\| \to \infty$ in \eqref{7.18},
we obtain
\begin{equation}
\label{7.19}\tag{7.19}
\sum_{p=0}^{\infty}
  d_{l p}(A)E_{l+p}(\rho; A)(-\rho)^p/(p!\,D^p)
= 1.
\end{equation}

Now suppose we define $y(\rho; A)$ and $E(\rho; A)$ by
\begin{equation}
\label{7.20}\tag{7.20}
yG(y; A) = \rho/D, \qquad
y(\rho; A) = (\rho/D)E(\rho; A).
\end{equation}
where \begin{equation}
\label{7.16}\tag{7.16}
G(y; A) = \sum_{p=0}^{\infty} d_{l p}(A)(-y)^p/p!,
\quad
\|y\| \le [eP(A)r(A)]^{-1}.
\end{equation}
Accordingly, the bounds \eqref{7.15} holds for $d_{lp}$ and ensure that all
series for $G_l(y; A)$ below converge for $\|y\|$ sufficiently small, and also

\textbf{Remark:} The whole logic is: one can use $yG(y; A) = \rho/D$ and $G(y; A) = \sum_{p=0}^{\infty} d_{l p}(A)(-y)^p/p!$ to solve out $y(\rho,A)$ explicitly and then use $y$ to determine $E$. One can understand $y$ as the renormalization of $\rho/D$, and $E$ is the factor that corrects the renormalization and make \eqref{7.19} still be 1.

Then, from \eqref{7.15} and \eqref{7.16}, it follows that $y$ and $E$ are analytic in $\rho$
for each real $A>0$, and that
\begin{equation}
\label{7.21}\tag{7.21}
[\rho E/D]^l [G_l(\rho E/D, A)]^l
= (\rho E/D)^l G_l(\rho E/D, A)
= (\rho/D)^l,
\qquad l=1,2,\dots
\end{equation}
remembering \eqref{7.16}. Dividing \eqref{7.21} by $(\rho/D)^l$, we recover \eqref{7.19} with
$E_{l+q}$ replaced by $E^{l+q}$. Since the $C_{l p}$ and hence the $E_l$ are
uniquely determined by the $d_{l p}$, we have
\begin{equation}
\label{7.22}\tag{7.22}
E_l(\rho; A) = [E(\rho; A)]^l,
\end{equation}
where $E$ is defined by \eqref{7.20}. Therefore, we will have 
\begin{equation}
\label{7.23}\tag{7.23}
g_l(\xi_1, \dots, \xi_{l-1}; \rho, A)
= [E(\rho, A)]^l
  \sum_{q=0}^{\infty}
  f_{l q}(\xi_1, \dots, \xi_{l-1}; A)
  [-y(\rho, A)]^q/q!,
\end{equation}
which is the formal expression of $g_l$. The series being uniformly and absolutely convergent over the entire
$(\xi_1, \dots, \xi_{l-1})$--space for all $\rho$ such that $y$ satisfies \eqref{7.16}. $y$ is actually the expansion parameter measuring effective density/correlation strength.

Finally, using the bounds \eqref{7.15}, we see that
\begin{equation}
\label{7.24}\tag{7.24}
|g_l|
\le |P(A)E(\rho; A)|^l\,l
\sum_{p=0}^{\infty} (l+p)^{p-1}(P_T |y|)^p/p!
\le C_1\,l\,(PE)^l,
\qquad
C_1 = e^{-1} + (2\pi)^{-1/2}\sum_{p=1}^{\infty} p^{-3/2},
\end{equation}
and
\begin{equation*}
|g_{l\xi_i^\alpha}|
\le [\sigma(A)/\tau(A)](l+1)(PE)^l
\sum_{p=1}^{\infty}p(l+p)^{p-2}(p_T|y|)^p/p!
\le C_2 [\sigma/\tau](l+1)(PE)^l,
\qquad
C_2 = C_1 - e^{-1},
\end{equation*}
for $\|y\| \le (P_T e)^{-1}$.

Using and the definitions \eqref{7.5} with the results from \cite{Morrey1955}{Section 7,8}{\;(8.12)}, we obtain
\begin{align}
\label{7.26}\tag{7.26}
\int_{-\infty}^{\infty}
  |h_{l p}(\xi_1, \dots, \xi_{l-1}; \eta_1, \dots, \eta_p; A)|
  \,d\eta
&\le [P(A)]^{l+p+1}\,[r(A)]^{p}.
\end{align}

Using \eqref{7.24} and \eqref{7.26}, we can see that we may pass to the limit of $p$ in \eqref{7.4} if $\|y\| \le (P_T e)^{-1}$ and \eqref{7.9}(\eqref{7.23}) converges for such $y$. Hence our solution \eqref{7.23} is the \textbf{only} solution of \eqref{7.3} that $g_l$ and $g_{l\xi_j^\alpha}$ is bounded in such a way that the series \eqref{7.9} converges and we may pass to the limit of \eqref{7.4}. 

Such $g_l$ is unique w.r.t $\Omega_l$, the potential. The way to check is to assume $g_l$ and $g'_l$ and take difference. Since \eqref{7.3} is homogeneous, the difference $h_l=g_l-g_l'$ still satisfy \eqref{7.3} and tend to zero at infinity. One can indeed show $h_l=0$. For $g_l=1$, it is the solution only when $\Omega_l=0$, which stands for the potential vanishes and particles are independent.

In this section, we reduced the limiting equations for $\phi_l$ to a simpler system for the correlation functions $g_l$. Under the Gibbs-type equilibrium ansatz, the velocity dependence is fixed, so the main task becomes the determination of the spatial correlations. The reduced system \eqref{7.3} makes this possible and leads to a formal series solution for $g_l$. For sufficiently small density, this series converges and yields the equilibrium branch needed later in the derivation of the hydrodynamic equations.

\section{The final hydrodynamic equations and the entropy structure}\label{sectioneuler}

Finally, we can now explain how Morrey passes from the limiting hierarchy to the hydrodynamic equations. The point of this last step is to combine two ingredients obtained earlier in the paper. The first is the system of generalized balance laws derived in Section \ref{section2} from the reduced particle distributions. The second is the special equilibrium form of the limiting functions $\phi_l$ obtained in Sections \ref{section3}--\ref{Sectiongl}. Once these are put together, the macroscopic equations close and take the form of the compressible Euler system. The argument proceeds in three steps: (i) pass the balance laws to the limit and substitute the equilibrium $\phi_1,\phi_2$ to obtain a \emph{closed} Euler system; (ii) read off the equation of state $(p,\varepsilon)$ from the equilibrium correlations and express the pressure through the single normalization function $C$; and (iii) exhibit an entropy, built from $C$, that is transported by the flow.

\medskip
\noindent\textbf{Step (i): passage to the limit and closure.}
From Theorem \ref{Thm:limit}, by assuming Conjectures \ref{conj:pi}, \ref{conj:cell} and \ref{conj:deri}, the functions $\phi_{Nl}$ and their relevant derivatives converge uniformly to limiting functions $\phi_l$ \eqref{6.1}. Under this assumption, the balance laws of Section \ref{section2} for the finite-$N$ system pass to the limit, yielding equations for the density $\rho$, mean velocity $\bar u$, and energy density $e$, namely the $N\to\infty$ counterparts of \eqref{2.9}, \eqref{2.10}, and \eqref{2.18}:
\begin{align}
&\frac{\partial \rho}{\partial t}+\frac{\partial}{\partial x^\alpha}\big(\rho\,\bar u^\alpha\big)=0,
\tag{8.1a}\label{8.1a}\\[4pt]
&\frac{\partial}{\partial t}\big(\rho\,\bar u^\beta\big)
+\frac{\partial}{\partial x^\alpha}\big(\rho\,\bar u^\alpha \bar u^\beta+\tau_{1}^{\alpha\beta}\big)
=F_{1}^\beta(t;x),
\tag{8.1b}\label{8.1b}\\[4pt]
&e_{t}(t;x)
+\frac{\partial}{\partial x^\alpha}\big(\bar u^\alpha e\big)
+\partial_{x^\alpha}\omega_{1}^\alpha(t;x)
=W^\alpha_{,\alpha}(t;x),
\tag{8.1c}\label{8.1c}
\end{align}
where, in the limiting (equilibrium) distributions $\phi_1,\phi_2$, the kinetic stress, energy flux, and interaction terms are
\[
\tau_{1}^{\alpha\beta}(t;x)=\int_{\mathbb R^3}(u^\alpha-\bar u^\alpha)(u^\beta-\bar u^\beta)\,\phi_{1}(t;x,u)\,du,
\]
\[
F_{1}^\beta(t;x)
=\frac{K}{2D}\int
\Phi'\big(|\xi_1|\big)\frac{\xi_1^\beta}{|\xi_1|}\,
\partial_{x}\phi_{2}(t;x,u;\xi_1,v_1)\,du\,d\xi_1\,dv_1,
\]
\[
\omega_{1}^\alpha(t;x)
=\frac12\int_{\mathbb R^3}(u^\alpha-\bar u^\alpha)
\left[|u|^2\phi_{1}(t;x,u)
+\frac{K}{D}\int\Phi(|\xi_1|)\,\phi_{2}(t;x,u;\xi_1,v_1)\,d\xi_1\,dv_1\right]du,
\]
and $W^\alpha_{,\alpha}$ denotes the symmetrized interaction-work term, the limit of the right-hand side of \eqref{2.18}. (In Morrey's notation, $\tau^{\alpha\beta}$ and $\omega^\beta$ are the momentum-flux and energy-flux tensors; we keep his symbols.) At this stage the equations still contain $\tau_1^{\alpha\beta}$, $\omega_1^\alpha$, and the interaction terms $F_1^\beta,W^\alpha$, all expressed through the two-particle function $\phi_2$, so the system is not closed. The role of the concrete equilibrium construction \eqref{3.3} is precisely to identify these terms explicitly and reduce them to thermodynamic quantities.

We now substitute the equilibrium expressions for $\phi_1$ and $\phi_2$ \eqref{6.1}. The key simplification is that these tensors become \emph{isotropic}.

\begin{lemma}[Isotropy of the equilibrium stress]\label{lem:isotropy}
For the equilibrium one-particle distribution $\phi_1(x,u)=\rho\,(A/\pi)^{3/2}\exp\{-A\|u-\bar u\|^2\}$,
\[
\tau_1^{\alpha\beta}=\frac{\rho}{2A}\,\delta^{\alpha\beta},
\qquad
\delta^{\alpha\beta}:=\begin{cases}1,&\alpha=\beta,\\[2pt] 0,&\alpha\neq\beta.\end{cases}
\]
\end{lemma}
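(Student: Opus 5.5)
The plan is a direct Gaussian computation. First substitute $w=u-\bar u$ in the definition of $\tau_1^{\alpha\beta}$ from \eqref{8.1b}. This translation has unit Jacobian and leaves the integration domain $\mathbb R^3$ unchanged, so
\[
\tau_1^{\alpha\beta}=\rho\Big(\tfrac{A}{\pi}\Big)^{3/2}\int_{\mathbb R^3} w^\alpha w^\beta e^{-A\|w\|^2}\,dw .
\]
Since $A>0$ (it is $1/(2kT)$, or equivalently $A_0$ from \eqref{6.5}), the integrand is integrable. Because $\|w\|^2=\sum_\gamma (w^\gamma)^2$, the Gaussian factorizes as $\prod_{\gamma=1}^3 e^{-A(w^\gamma)^2}$, and Fubini reduces everything to one-dimensional integrals.

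\textbf{Off-diagonal case.} For $\alpha\neq\beta$ the integral factors into three one-dimensional pieces. The $w^\alpha$-factor is $\int_{\mathbb R} w^\alpha e^{-A(w^\alpha)^2}\,dw^\alpha=0$, since the integrand is odd, so $\tau_1^{\alpha\beta}=0$. Equivalently, the reflection $w^\alpha\mapsto -w^\alpha$ preserves the measure and flips the sign of the integrand.

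\textbf{Diagonal case.} For $\alpha=\beta$ we get
\[
\tau_1^{\alpha\alpha}=\rho\Big(\tfrac{A}{\pi}\Big)^{3/2}\Big(\tfrac{\pi}{A}\Big)\int_{\mathbb R} s^2 e^{-As^2}\,ds .
\]
This uses the standard $\int_{\mathbb R} e^{-As^2}ds=(\pi/A)^{1/2}$ for the two transverse directions. For the remaining factor, integrate by parts with $s\,e^{-As^2}=-\tfrac{1}{2A}\tfrac{d}{ds}e^{-As^2}$, or differentiate $(\pi/A)^{1/2}$ with respect to $A$; either way $\int_{\mathbb R} s^2e^{-As^2}ds=\tfrac{1}{2A}(\pi/A)^{1/2}$. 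Multiplying out, the powers of $\pi/A$ cancel and $\tau_1^{\alpha\alpha}=\rho/(2A)$, which gives $\tau_1^{\alpha\beta}=\frac{\rho}{2A}\delta^{\alpha\beta}$.

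There is no genuine obstacle here. The only points needing care are bookkeeping ones. First, the normalization $(A/\pi)^{3/2}$ makes $\int\phi_1\,du=\rho$, consistent with \eqref{2.7}. Second, $\phi_1$ as stated is exactly the $l=1$ case of \eqref{6.1}: there $\mathcal X_1=0$, since there are no pairs, and $g_1$ is constant, equal to $1$ by \eqref{6.2}, or absorbed into the normalization so that the density is $\rho$. So the lemma applies to the limiting distribution produced by Theorem~\ref{Thm:limit}. Isotropy itself is just rotational invariance of the centered Maxwellian: for any orthogonal $Q$, $Q\tau_1Q^{T}=\tau_1$ forces $\tau_1$ to be a multiple of the identity, and the trace $\int\|w\|^2\phi_1\,dw=3\rho/(2A)$ fixes the scalar. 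This gives a quick cross-check of the computation.
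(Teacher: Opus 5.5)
Your proof is correct and follows the same route as the paper. Both proofs center at $\bar u$, show the off-diagonal entries vanish because the integrand is odd in $w^\alpha$, and evaluate each diagonal entry as the one-dimensional Gaussian variance $\rho/(2A)$; the only cosmetic difference is that you compute every diagonal entry directly via Fubini, while the paper first uses permutation symmetry to equate them.
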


\begin{proof}
In the fluctuation variable $w=u-\bar u$, the weight $\phi_1$ depends on $w$ only through $\|w\|^2$, hence is invariant under the orthogonal group $O(3)$ acting on $w$. For $\alpha\neq\beta$ the integrand $w^\alpha w^\beta\,e^{-A\|w\|^2}$ is odd in $w^\alpha$, so $\tau_1^{\alpha\beta}=0$. For the diagonal entries, the reflection/permutation symmetry of $e^{-A\|w\|^2}$ forces $\tau_1^{11}=\tau_1^{22}=\tau_1^{33}$, each equal to the one-dimensional Gaussian variance
\[
\int_{\mathbb R^3}(w^\alpha)^2\,\rho\Big(\tfrac{A}{\pi}\Big)^{3/2}e^{-A\|w\|^2}\,dw=\frac{\rho}{2A}.
\]
Thus $\tau_1^{\alpha\beta}=\frac{\rho}{2A}\delta^{\alpha\beta}$.
\end{proof}

\noindent
More structurally, any symmetric rank-two tensor obtained by integrating a rotation-invariant weight against $w^\alpha w^\beta$ (or, for the interaction term, against $\xi^\alpha\xi^\beta/|\xi|$) must commute with every rotation; by Schur's lemma applied to the action of $SO(3)$ on symmetric $2$-tensors, the only such tensors are multiples of $\delta^{\alpha\beta}$. The interaction term $F_1^\beta$ in \eqref{8.1b} is, in the limit, the divergence of just such a tensor, so it too is isotropic and contributes a scalar (virial) pressure. Consequently the momentum flux collapses to $\rho\bar u^\alpha\bar u^\beta+p\,\delta^{\alpha\beta}$, and the energy flux $\omega_1^\alpha$ reduces to the pressure-work term $p\,\bar u^\alpha$. In Morrey's notation this is the closure
\[
\tau^{\alpha\beta}=-p\,\delta^{\alpha\beta},\qquad \omega^\beta=-p\,\bar u^\beta,
\]
matching the original derivation. The balance laws \eqref{8.1a}--\eqref{8.1c} therefore take the conservation form
\begin{equation}\label{euler-cons}\tag{8.2}
\rho_t + \partial_{x^\alpha}(\rho \bar u^\alpha)=0,
\qquad
\partial_t(\rho \bar u^\beta) + \partial_{x^\alpha}\!\big(\rho \bar u^\beta \bar u^\alpha + p\,\delta^{\beta\alpha}\big)=0,
\qquad
e_t + \partial_{x^\alpha}\!\big[(e+p)\,\bar u^\alpha\big]=0,
\end{equation}
where $e=\rho\big(\tfrac12|\bar u|^2+\varepsilon\big)$ is the total energy density and $\varepsilon$ the specific internal energy.

It is convenient to rewrite \eqref{euler-cons} in the advective variables $(\rho,\bar u,\varepsilon)$, since the equilibrium closure below is expressed naturally through $\varepsilon$ rather than the total energy $e$. Using continuity in the momentum balance gives the velocity equation in non-conservative form; subtracting the kinetic-energy identity (obtained by contracting the momentum equation with $\bar u^\beta$),
\[
\partial_t\!\Big(\tfrac12\rho|\bar u|^2\Big)+\partial_{x^\alpha}\!\Big(\tfrac12\rho|\bar u|^2\,\bar u^\alpha\Big)
= -\,\bar u^\beta\,p_{x^\beta},
\]
from the total-energy equation, and using continuity once more, isolates the internal part:
\[
\rho\big(\varepsilon_t+\bar u^\alpha \varepsilon_{x^\alpha}\big) = -\,p\,\partial_{x^\alpha}\bar u^\alpha
= p\,\rho^{-1}\big(\rho_t+\bar u^\alpha\rho_{x^\alpha}\big).
\]
Hence \eqref{euler-cons} is equivalent to
\begin{equation}\label{euler-adv}\tag{8.3}
\rho_t + \partial_{x^\alpha}(\rho \bar u^\alpha)=0,
\qquad
\bar u^\beta_t + \bar u^\alpha \bar u^\beta_{x^\alpha}
   = - \rho^{-1} p_{x^\beta},
\qquad
\varepsilon_t + \bar u^\alpha \varepsilon_{x^\alpha}
   = p\,\frac{\rho_t+\bar u^\alpha \rho_{x^\alpha}}{\rho^2}.
\end{equation}
This is Morrey's Euler-level hydrodynamic system: conservation of mass, momentum balance with isotropic pressure, and the internal-energy law in the form ``$d\varepsilon=-p\,d(1/\rho)$''. No viscous or heat-conduction terms appear, in agreement with Morrey's interpretation that his argument captures only the leading, equilibrium-scale behavior of the fluid.

\medskip
\noindent\textbf{Step (ii): the equation of state.}
It remains to determine $p$ and $\varepsilon$ as functions of the thermodynamic state. Both are read directly from the equilibrium distributions \eqref{6.1}: $\varepsilon$ is the mean energy per particle in the local Gibbs state, and $p$ is the trace of the (kinetic plus virial) stress carried by $\phi_1$ and $\phi_2$. The energy density \eqref{2.10} splits into a kinetic part, governed by the Gaussian velocity factor in \eqref{6.1}, and a potential part, governed by the pair correlation $g_2$. The Gaussian factor has variance set by the inverse-temperature parameter, so the kinetic contribution to the specific internal energy is the equipartition value $\tfrac{3}{4B}$ (three translational degrees of freedom). Averaging $\tfrac12\Phi$ against the equilibrium pair distribution $e^{-2KA\Phi}g_2$ gives the potential part, so that
\begin{equation}\label{eos-eps}\tag{8.4}
\varepsilon
 = \frac{3}{4B}
   + \frac{K\rho}{2D}
     \int \Phi(|\xi_1|)\,
     e^{-2KA\Phi(|\xi_1|)}\, g_2(\xi_1;\rho,A)\,d\xi_1,
\end{equation}
where the first term is the ideal (kinetic) part and the second the interaction correction. Here $B=\tfrac{3}{4\varepsilon}$ is the bare inverse-temperature parameter of \eqref{5.16}, while $A=B\big(1-\tfrac{B}{C}\tfrac{\partial C}{\partial b}\big)$ from \eqref{6.5} is the dressed parameter appearing in the Gibbs exponent; the two coincide only in the non-interacting limit. The same computation applied to the momentum flux gives
\begin{equation}\label{eos-p}\tag{8.5}
p
 = \frac{\rho}{2A}
   - \frac{K\rho^2}{6D}
     \int |\xi_1|\,\Phi'(|\xi_1|)\,
     e^{-2KA\Phi(|\xi_1|)}\, g_2(\xi_1;\rho,A)\,d\xi_1,
\end{equation}
the first term being the kinetic pressure and the second the standard virial integral $-\tfrac16\rho^2\!\int r\,\Phi'(r)\,g_2$. Equations \eqref{eos-eps}--\eqref{eos-p} show that the equation of state is not imposed abstractly but extracted from the limiting particle distribution itself.

This is the central structural point of the construction, and it is worth pausing on it. In the kinetic (dilute-gas) route to hydrodynamics discussed in Section~\ref{subsec:motivation}, the second terms in \eqref{eos-eps}--\eqref{eos-p} are absent: with no interaction correction, $A=B$ and one is left with $\varepsilon=\tfrac{3}{4B}$, $p=\tfrac{\rho}{2B}$, hence the rigid ideal-gas law
\[
p=\tfrac23\rho\varepsilon=(\gamma-1)\rho\varepsilon,\qquad \gamma=\tfrac53\ \text{fixed.}
\]
In Morrey's setting the interaction terms survive: the pressure depends on the potential $\Phi$ through the virial integral and on the correlation $g_2$, so the equation of state is no longer pinned to a single value of $\gamma$ but varies with the microscopic interaction. This is precisely the flexibility anticipated in Section~\ref{subsec:motivation}, and we now make it quantitative by tying $p$ to the normalization function $C$.

To do so we return to $C(\rho,b)$ from Theorem \ref{Thm:limit}, which carries the normalization information of the equilibrium family and hence reflects its thermodynamic structure. The asymptotic computation of Section~\ref{Sectiongl} (specifically the density-derivative of $\log C$, cf.\ \eqref{6.21}) reproduces exactly the virial integral in \eqref{eos-p}:
\begin{equation}\label{C-virial}\tag{8.6}
\frac{\rho C_\rho}{C}
 = \frac{2}{9}KA\Bigl(\frac{\rho}{D}\Bigr)
   \int |\xi_1|\,\Phi'(|\xi_1|)\,
   e^{-2KA\Phi(|\xi_1|)}\, g_2(\xi_1;\rho,A)\,d\xi_1.
\end{equation}
Substituting \eqref{C-virial} into \eqref{eos-p} to eliminate the integral, and using $A=B\big(1-\tfrac{B}{C}C_b\big)$ from \eqref{6.5}, gives the equation of state in closed form,
\begin{equation}\label{eos-C}\tag{8.7}
p = \frac{\rho}{2A} - \frac{3}{4A}\,\frac{\rho^2 C_\rho}{C}.
\end{equation}
Thus the pressure is expressed \emph{directly through the single function $C$}: the kinetic pressure $\rho/(2A)$ is corrected by a term $-\tfrac{3}{4A}\rho^2 C_\rho/C$ measuring how the equilibrium normalization responds to changes in density. This is the transition from the correlation description to thermodynamics, and it shows that the closure is controlled by $C$ --- and hence, through $C$, by the interaction potential $\Phi$ --- rather than by a prescribed adiabatic exponent.

\medskip
\noindent\textbf{Step (iii): the entropy.}
The same function $C$ generates the entropy, and here the choice of $\varepsilon$ as working variable pays off. Since the kinetic part of \eqref{eos-eps} gives $\varepsilon=\tfrac{3}{4B}$, equivalently $B=\tfrac{3}{4\varepsilon}$, the natural object to examine is the differential $d\varepsilon-p\,d(1/\rho)=d\varepsilon-p\,\rho^{-2}\,d\rho$ on the left of the internal-energy law \eqref{euler-adv}: by the first law this is $T\,dS$ along the flow, so its integrating factor is the inverse temperature $A=1/(2kT)$. Computing this combination from \eqref{eos-eps} and \eqref{eos-C} yields
\begin{equation}\label{dS-expand}\tag{8.8}
A\Bigl(d\varepsilon - p\,\frac{d\rho}{\rho^2}\Bigr)
 = -\frac{3}{4}\frac{dB}{B}
   + \frac{3}{4}\frac{C_B\,dB}{C}
   - \frac{1}{2}\frac{d\rho}{\rho}
   + \frac{3}{4}\frac{C_\rho\,d\rho}{C},
\end{equation}
matching Morrey's (13.10). The right-hand side is an exact differential: it is $d$ of
\begin{equation}\label{entropy}\tag{8.9}
S = -\frac12 \log \rho + \frac34 \log \varepsilon + \frac34 \log C + \text{const.},
\end{equation}
where we used $-\tfrac34\log B=\tfrac34\log\varepsilon+\text{const.}$ (since $B=\tfrac{3}{4\varepsilon}$) to convert the $dB/B$ term. With this definition \eqref{dS-expand} reads
\[
A\Bigl(d\varepsilon - p\,\frac{d\rho}{\rho^2}\Bigr)=dS,
\]
so $A$ is the integrating factor and $S$ the corresponding entropy. The left-hand side of the internal-energy law \eqref{euler-adv} is precisely $\varepsilon_t+\bar u^\alpha\varepsilon_{x^\alpha}-p\rho^{-2}(\rho_t+\bar u^\alpha\rho_{x^\alpha})$, so dividing by $A$ and substituting \eqref{dS-expand} gives the transport law
\[
S_t + \bar u^\alpha S_{x^\alpha}=0.
\]
Hence along the limiting flow the entropy is conserved --- exactly the entropy law expected for the compressible Euler equations in the absence of dissipative effects.

This calculation clarifies that $C(\rho,b)$ is not merely an auxiliary normalization factor from the equilibrium construction. Once the hydrodynamic limit is taken, $C$ enters the equation of state \eqref{eos-C} and contributes directly to the entropy \eqref{entropy}: the same object that controls the convergence of the marginals in Theorem~\ref{Thm:limit} also fixes the pressure law and the entropy. This is what allows Morrey's closure to accommodate a potential-dependent equation of state rather than a fixed polytropic one. Finally, Morrey records qualitative properties of $C$ --- convexity in $b$ for $b\ge0$, and the bounds
\[
1-\tfrac{2}{3}\phi(h)\le C(\rho,b)\le 1+\tfrac{2}{3}KLb
\]
of Lemma~\ref{lem:bounds}, with $C(\rho,b)$ positive for $\rho$ sufficiently small. These suggest that the region where $C(\rho,b)>0$ is, in Morrey's words, \emph{probably} the largest set to which the equilibrium family --- and hence the hydrodynamic description --- can be continued by analytic continuation.

\bibliographystyle{plain}
\bibliography{ref}

@book{CIP1994,
  author    = {Cercignani, Carlo and Illner, Reinhard and Pulvirenti, Mario},
  title     = {The Mathematical Theory of Dilute Gases},
  series    = {Applied Mathematical Sciences},
  volume    = {106},
  publisher = {Springer-Verlag},
  address   = {New York},
  year      = {1994},
  isbn      = {978-0-387-94294-0},
  doi       = {10.1007/978-1-4419-8524-8}
}

@book{MayerMayer1940,
  author    = {Joseph E. Mayer and Maria Goeppert Mayer},
  title     = {Statistical Mechanics},
  publisher = {John Wiley \& Sons},
  address   = {New York},
  year      = {1940},
}

@incollection{DeMasiEtAl1984,
  author    = {De Masi, A. and Ianiro, N. and Pellegrinotti, A. and Presutti, E.},
  title     = {A Survey of the Hydrodynamical Behavior of Many-Particle Systems},
  booktitle = {Studies in Statistical Mechanics},
  volume    = {11},
  editor    = {Montroll, E. W. and Lebowitz, J. L.},
  publisher = {North-Holland},
  address   = {Amsterdam},
  year      = {1984},
}

@book{BornGreen1949,
  author    = {Born, Max and Green, Herbert S.},
  title     = {A General Kinetic Theory of Liquids},
  publisher = {Cambridge University Press},
  address   = {Cambridge},
  year      = {1949}
}

@article{Morrey1955,
  author  = {Morrey, Charles B., Jr.},
  title   = {On the Derivation of the Equations of Hydrodynamics from Statistical Mechanics},
  journal = {Communications on Pure and Applied Mathematics},
  volume  = {8},
  pages   = {279--326},
  year    = {1955}
}

@incollection{Lanford1975,
  author    = {Lanford, Oscar E., III},
  title     = {Time Evolution of Large Classical Systems},
  booktitle = {Dynamical Systems, Theory and Applications},
  editor    = {Moser, J.},
  series    = {Lecture Notes in Physics},
  volume    = {38},
  pages     = {1--111},
  publisher = {Springer},
  address   = {Berlin, Heidelberg},
  year      = {1975},
  doi       = {10.1007/3-540-07171-7_1}
}

@article{Caflisch1980,
  author  = {Caflisch, Russel E.},
  title   = {The Fluid Dynamic Limit of the Nonlinear Boltzmann Equation},
  journal = {Communications on Pure and Applied Mathematics},
  volume  = {33},
  number  = {5},
  pages   = {651--666},
  year    = {1980},
  doi     = {10.1002/cpa.3160330506}
}

@article{BardosGolseLevermore1991,
  author  = {Bardos, Claude and Golse, Fran{\c c}ois and Levermore, C. David},
  title   = {Fluid Dynamic Limits of Kinetic Equations. I. Formal Derivations},
  journal = {Journal of Statistical Physics},
  volume  = {63},
  number  = {1--2},
  pages   = {323--344},
  year    = {1991},
  doi     = {10.1007/BF01026608}
}

@book{GallagherSaintRaymondTexier2014,
  author    = {Gallagher, Isabelle and Saint-Raymond, Laure and Texier, Benjamin},
  title     = {From Newton to Boltzmann: Hard Spheres and Short-Range Potentials},
  publisher = {European Mathematical Society},
  series    = {Zurich Lectures in Advanced Mathematics},
  year      = {2014},
  doi       = {10.4171/129}
}

@misc{DengHaniMa2025,
  author       = {Deng, Yu and Hani, Zaher and Ma, Xiao},
  title        = {Hilbert's Sixth Problem: Derivation of Fluid Equations via Boltzmann's Kinetic Theory},
  year         = {2025},
  eprint       = {2503.01800},
  archivePrefix= {arXiv},
  primaryClass = {math.AP},
  url          = {https://arxiv.org/abs/2503.01800}
}

@book{Khinchin1949,
  author    = {Khinchin, Aleksandr Ya.},
   title     = {Mathematical Foundations of Statistical Mechanics},
 publisher = {Dover Publications},
   address   = {New York},
   year      = {1949},
   note      = {Translated from the Russian by G.~Gamow}
 }

@book{Ruelle1969,
  author    = {Ruelle, David},
  title     = {Statistical Mechanics: Rigorous Results},
  publisher = {W.~A. Benjamin},
  address   = {New York},
  year      = {1969},
  note      = {Reprinted by World Scientific, Singapore, 1999}
}

\appendix
\section{A priori bounds}\label{sectionAbounds}
 Throughout the paper, one can establish certain estimations, especially on the summation of $\Phi$. They are important both in the convergence of $\phi_{Nl}$ and in estimating the Fourier transform \eqref{3.6}. 
\begin{lemma}
    [Existence of Lower Bound Functions]\label{lem:psi_chi}
There exist functions $\Psi(h)$ and $\mathcal{X}(h)$ with the following properties:
\begin{enumerate}[label=(\alph*)]
    \item If $y_1, \dots, y_n$ are any vectors with all pairwise distances 
    $|y_i - y_k| \ge h$, then for each $j=1,\dots,n$,
    \begin{equation*}
    \sum_{k\ne j} \Phi\!\left(|y_k - y_j|\right) \ge \Psi(h),
    \end{equation*}
    where $\Psi(h)$ is an increasing function.
    
    \item If also for some fixed $j_0,k_0$ we have $|y_{j_0} - y_{k_0}| = h$, then
    \begin{equation*}
    \sum_{k\ne j}^n \Phi\!\left(|y_k-y_j|\right) 
    \ge \Phi(h) + \Psi(h) \equiv \mathcal{X}(h),
    \qquad j = j_0 \text{ or } k_0,
    \end{equation*}
    and 
    \[
    \lim_{h \to 0^+} \mathcal{X}(h) = +\infty.
    \]
\end{enumerate}
    
\end{lemma}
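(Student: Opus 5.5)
We need lower bounds for $\sum_{k\ne j}\Phi(|y_k-y_j|)$ that hold uniformly in $n$, given that all pairwise distances are at least $h$. Since $\Phi$ may be negative (attractive tail), the only danger comes from the negative part $\Phi_1:=\max(-\Phi,0)$. Write $\Phi=\Phi_0-\Phi_1$ with $\Phi_0=\max(\Phi,0)\ge0$, and bound $\sum_{k\ne j}\Phi(|y_k-y_j|)\ge -\sum_{k\ne j}\Phi_1(|y_k-y_j|)$.

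\textbf{Step 1: $\Phi_1$ is bounded and decays.} By Assumption~\ref{Phi}(i), $\Phi(r)\to+\infty$ as $r\to0^+$, so $\Phi_1$ vanishes near $0$. Also $r^s\Phi(r)\to0$ at infinity, so $\Phi_1(r)\le c\,r^{-s}$ for $r\ge r_1$. Continuity on $(0,\infty)$ then gives $\Phi_1\le m:=-\min\Phi<\infty$, and we obtain the bound $\Phi_1(r)\le M(1+r)^{-s}$ for some constant $M$.

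\textbf{Step 2: packing count.} Because the points are $h$-separated, the balls $B(y_k,h/2)$ are disjoint. The number of $y_k$ with $|y_k-y_j|\in[\nu h,(\nu+1)h)$ is therefore at most $\frac{(\nu+3/2)^3-(\nu-1/2)^3}{(1/2)^3}\le C_0(\nu+1)^2$. Hence
\[
\sum_{k\ne j}\Phi_1(|y_k-y_j|)\le \sum_{\nu\ge1}C_0(\nu+1)^2 M(1+\nu h)^{-s}=:-\Psi(h),
\]
a series that converges because $s>3$. It is independent of $n$ and of $j$. It is non-increasing in $h$, since each term decreases, so $\Psi$ is increasing; if strict monotonicity is wanted, replace it by $\Psi(h)-e^{-h}$. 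This proves (a).

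\textbf{Step 3: part (b).} If $|y_{j_0}-y_{k_0}|=h$, isolate that single term in the sum for $j=j_0$ (or $k_0$). The remaining sum is over an $h$-separated family, so it is at least $\Psi(h)$ by (a), and the total is at least $\Phi(h)+\Psi(h)=\mathcal X(h)$.

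\textbf{Step 4: the limit $h\to0^+$, the main obstacle.} Here $\Psi(h)\sim -C h^{-3}$, while $\Phi(h)\ge c\,h^{-s}$ with $s>3$ by Assumption~\ref{Phi}(i). So $\Phi(h)$ dominates and $\mathcal X(h)\to+\infty$. The delicate point is getting the sharp $h^{-3}$ growth of $|\Psi|$. I will estimate the series by comparison with $h^{-3}\int_0^\infty (t+h)^2(1+t)^{-s}\,dt\le C h^{-3}$ for $h\le1$. This step uses $s>3$ twice: once for convergence of the series and once to beat the $h^{-3}$ packing growth.
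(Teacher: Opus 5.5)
Your proof is correct. The splitting $\Phi=\Phi_0-\Phi_1$ and the disjoint-ball count of at most $C_0(\nu+1)^2$ points in each shell $\nu h\le|y_k-y_j|<(\nu+1)h$ give an $n$-independent series defining $\Psi$, which proves (a). Part (b) follows by isolating the $k_0$ term. The bound $|\Psi(h)|\le Ch^{-3}$ for $h\le 1$, together with $h^3\Phi(h)=h^{3-s}\cdot h^s\Phi(h)\to\infty$, gives $\mathcal X(h)\to+\infty$.

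The paper gives no proof of this lemma. It only remarks that the proof relies on geometric estimates of particle separations and on monotonicity properties of $\Phi$. Your packing argument supplies exactly the omitted geometric estimate, and it uses the same $\Phi_0-\Phi_1$ splitting that the paper borrows from Morrey in the proof of Lemma 6.2.

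The one genuine difference is where monotonicity comes from. Assumption~\ref{Phi}, as stated in this paper, imposes no monotonicity on $\Phi$. You avoid needing it by dominating $\Phi_1$ with the radially decreasing majorant $M(1+r)^{-s}$, and this is what makes the shell-by-shell bound legitimate. A side benefit is the explicit rate $\Psi(h)\ge -Ch^{-3}$, which shows precisely the two places where $s>3$ is used.

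Two cosmetic points:
\begin{itemize}
\item Each term $(1+\nu h)^{-s}$ is strictly decreasing in $h$, so $\Psi$ is already strictly increasing unless $\Phi_1\equiv 0$. The $-e^{-h}$ correction is only needed in that degenerate case.
\item In the integral comparison, compare the $\nu$-th term with $h^{-1}\int_{(\nu-1)h}^{\nu h}$. On that interval $(1+\nu h)^{-s}\le(1+t)^{-s}$ and $\nu+1\le t/h+2$, so the integrand is $(t+2h)^2(1+t)^{-s}$ rather than $(t+h)^2(1+t)^{-s}$. This does not affect the $h^{-3}$ bound for $h\le 1$.
\end{itemize}
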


\begin{lemma}
    \label{thm:lower_bound}
There exists a constant $L \ge 0$ such that
\begin{equation}
\sum_{i\neq k}^n \alpha_{ik}\, \Phi\!\left(|y_j - y_k|\right)
\ge -\, n L,\tag{A.1}
\label{eq:lower_bound}
\end{equation}
for any vectors $y_1, \dots, y_n$, where $\alpha_{ik} = \alpha_{ki}$ and each $\alpha_{ik}$ is either $0$ or $1$.
\end{lemma}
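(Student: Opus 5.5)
We want to show that $\sum_{i\neq k}\alpha_{ik}\Phi(|y_i-y_k|)\ge -nL$ for every $0/1$ symmetric selection $\alpha_{ik}$. The plan is to bound the negative part directly. The sum splits as
\[
\sum_{i\neq k}\alpha_{ik}\Phi(|y_i-y_k|)\ \ge\ -\sum_{i\neq k}\Phi^-(|y_i-y_k|),
\qquad \Phi^-:=\max(-\Phi,0).
\]
Dropping the positive terms is harmless, since they are nonnegative, and every $\alpha_{ik}\le 1$. So it suffices to prove
\[
\sum_{i\neq k}\Phi^-(|y_i-y_k|)\le nL
\]
\emph{independently of the configuration}.

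This does not hold as stated for arbitrary points. If all $y_i$ sit at a distance $r_*$ where $\Phi(r_*)<0$, each point sees $n-1$ attractive neighbours, and the sum is of order $n^2$. The repulsive core must therefore be used, with the $\alpha$'s doing no work.

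The real argument is therefore a stability estimate in the sense of Ruelle's stability condition: $\Phi(r)\to+\infty$ near $0$ faster than $r^{-s}$, and $\Phi$ is $o(r^{-s})$ at infinity with $s>3$. I would proceed in three steps.

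\textbf{Step 1 (core radius).} Choose $h_0>0$ such that $\Phi(r)\ge M$ for $r\le h_0$, with $M$ large. This is possible by Assumption~\ref{Phi}(i).

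\textbf{Step 2 (tail count).} For a point whose neighbours are pairwise $\ge h_0$ apart, a packing count in dyadic shells $2^m h_0\le r<2^{m+1}h_0$ bounds the number of neighbours in each shell by $C2^{3m}$. The tail $|\Phi(r)|\le c r^{-s}$ then gives
\[
\sum_k\Phi^-\le C\sum_m 2^{3m}(2^m h_0)^{-s}=:L_0<\infty,
\]
using $s>3$. This is exactly the content of $\Psi(h)$ in Lemma~\ref{lem:psi_chi}(a), namely $\Psi(h_0)\ge -L_0$.

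\textbf{Step 3 (arbitrary configurations).} For configurations with close pairs, the key step is to show that each point $y_j$ still satisfies a one-sided bound. Take $h$ to be the minimal distance from $y_j$ to the others. By Lemma~\ref{lem:psi_chi}(b), the local sum $\sum_{k}\Phi(|y_k-y_j|)$ is then at least $\mathcal X(h)$ for suitable configurations.

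A cleaner route is a greedy/induction argument on $n$. Pick the pair realising the minimal distance $h$ in the whole configuration; all pairwise distances are then $\ge h$. Lemma~\ref{lem:psi_chi}(b) gives the bound for the point $j_0$ of that pair: for its row,
\[
\sum_{k\ne j_0}\Phi(|y_k-y_{j_0}|)\ge \mathcal X(h)\ge \inf_{h>0}\mathcal X(h)=:-L,
\]
which is finite because $\mathcal X\to+\infty$ as $h\to0$, while $\Psi$ is increasing and bounded below for $h\ge h_0$ by Step 2. Remove $y_{j_0}$, so that its row and column drop out: each removed row contributes its terms twice by symmetry, and the factor of two is absorbed into $L$. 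Then induct on $n$, summing $n$ one-point bounds to get $-nL$.

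With the $\alpha$'s present, I would run the same induction but apply it to the full rows. Each removed row is bounded above by its $\Phi^-$ sum, and that is controlled by a packing argument relative to the current minimal distance.

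\textbf{Main obstacle.} The hard part is showing that $\inf_h\mathcal X(h)>-\infty$ uniformly in $n$. For small $h$, $\Psi(h)$ may be very negative because many points can pack at separation $h$. One must check that $\Phi(h)$ dominates $|\Psi(h)|$, whose magnitude is about $h^{-3}\cdot\sup|\Phi^-|$. Growth $\Phi(h)\gtrsim h^{-s}$ with $s>3$ does exactly this, so assumption (i) is what I would invoke at this point.
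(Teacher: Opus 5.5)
Your Steps 1–3 are sound for the full sum $\alpha_{ik}\equiv1$. Removing a point of a globally closest pair, bounding its row below by $\mathcal X(h)\ge\inf_{h>0}\mathcal X(h)>-\infty$ via Lemma~\ref{lem:psi_chi}(b), and inducting on $n$ is the route the appendix points to; its proof is omitted there. Your check that $\Phi(h)\gg h^{-3}\gtrsim|\Psi(h)|$ as $h\to0$ is exactly where $s>3$ enters.

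The genuine gap is your last paragraph, the extension to general $\alpha$. Your own first display is sharp. The symmetric $0/1$ choice $\alpha_{ik}=\mathbf 1\{\Phi(|y_i-y_k|)<0\}$ makes the left-hand side equal to $-\sum_{i\neq k}\Phi^-(|y_i-y_k|)$. So the lemma for all admissible $\alpha$ is \emph{equivalent} to the configuration-independent bound $\sum\Phi^-\le nL$, which you yourself say fails. The $\alpha$'s are therefore not ``doing no work''. They can switch off precisely the core term $\Phi(h)$ of the closest pair. After that, the row of $y_{j_0}$ is only bounded below by $\Psi(h)$, which can be as negative as $-ch^{-3}$. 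Hence ``packing relative to the current minimal distance'' gives no uniform bound, and the induction does not close.

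In fact nothing can close it. Suppose $\Phi(r_*)<0$ for some $r_*$; this is permitted by Assumption~\ref{Phi} (e.g.\ a Lennard-Jones-type potential), and the paper itself uses $m=-\min\Phi$. Take $n$ even and build the following configuration:
\begin{itemize}
\item place $n/2$ distinct points in a ball of radius $\eta$ about $0$, and $n/2$ in a ball of radius $\eta$ about $r_*e_1$;
\item choose $\eta$ so small that $\Phi\le\frac12\Phi(r_*)$ on $[r_*-2\eta,r_*+2\eta]$;
\item set $\alpha_{ik}=1$ exactly for cross-cluster pairs.
\end{itemize}
Then $\sum_{i\ne k}\alpha_{ik}\Phi(|y_i-y_k|)\le\frac{n^2}{4}\Phi(r_*)<-nL$ for $n$ large. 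This two-cluster configuration is the correct witness. Your example, ``all $y_i$ at mutual distance $r_*$'', is impossible in $\mathbb R^3$ once $n\ge5$.

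So the lemma as literally stated is false for general $\alpha$, and no proof of it exists. What your argument does prove is the case $\alpha\equiv1$. More generally it covers $\alpha_{ik}=\beta_i\beta_k$ with $\beta_i\in\{0,1\}$: run the greedy induction on the sub-configuration $\{i:\beta_i=1\}$ to get $-2nL$. That is all Theorems~\ref{thm:LNbound} and~\ref{thm:A.2} actually invoke. You should state this restriction explicitly instead of asserting the general-$\alpha$ case.
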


\textbf{Remark :} The proofs, which rely on geometric estimates of particle separations and monotonicity properties of $\Phi$, are omitted here for brevity. They ensure that interaction sums involving $\Phi(r)$ cannot decrease indefinitely, establishing a uniform lower bound $L$ independent of particle number.

\begin{theorem}[A.1]\label{thm:LNbound}
There exists a constant \(L_1\), independent of \(N\), such that for every sequence
of distinct vectors \(y_1,\dots,y_N\),
\begin{equation}
N^{-1}\sum_{i,k=1}^{N} \|y_i-y_k\|\,\big|\Phi'(\|y_i-y_k\|)\big|
\ \le\ 
(\delta+1)\,N^{-1}\sum_{i,k=1}^{N}\Phi(\|y_i-y_k\|)\ +\ L_1.
\label{A.2}\tag{A.2}
\end{equation}
\end{theorem}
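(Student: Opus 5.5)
The plan is to prove \eqref{A.2} pairwise, from a one-variable inequality for $\Phi$. Afterwards I would control the error terms using only the lower bound of Lemma~\ref{thm:lower_bound}. The key is to split pairs into a short-range region, where Assumption~\ref{Phi}(iii) gives $|r\Phi'(r)|\le(\delta+1)\Phi(r)$, and a complementary region, where $r|\Phi'(r)|$ is bounded but the number of neighbours is not.

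\textbf{Step 1 (pointwise inequality).} By Assumption~\ref{Phi}(iii) there is $r_1>0$ such that for $0<r\le r_1$ we have $\Phi(r)>0$ and $r|\Phi'(r)|\le(\delta+1)\Phi(r)$. Here (i) gives $\Phi\to+\infty$, so $\Phi>0$ near $0$. By (ii), $r^{s+1}\Phi'(r)\to0$, so $r|\Phi'(r)|\le c\,r^{-s}$ for $r\ge r_1$. Since $\Phi$ is bounded below (by (i) and continuity), set $m=-\inf\Phi\ge0$. Then for all $r>0$
\[
r|\Phi'(r)|\le(\delta+1)\Phi(r)+(\delta+1)m\,\mathbf 1_{\{r\le R\}}+c\,r^{-s}\mathbf 1_{\{r>r_1\}},
\]
where $R$ is chosen so that $\Phi\ge0$ is not needed beyond it. More simply, one can write $r|\Phi'|\le(\delta+1)\Phi+\eta(r)$ with $\eta\ge0$, $\eta=0$ on $(0,r_1]$, and $\eta(r)\le c'(1+r)^{-s}$ after absorbing $(\delta+1)\Phi^-$, which also decays like $r^{-s}$ by (i).

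\textbf{Step 2 (summation).} Summing over $i\neq k$ and dividing by $N$ reduces \eqref{A.2} to showing $N^{-1}\sum_{i\ne k}\eta(|y_i-y_k|)\le L_1$. This is the main obstacle: if many points cluster, the number of pairs at distance in $[r_1,2r_1]$ can be of order $N^2$. Pure decay of $\eta$ does not save this, because the left side of \eqref{A.2} also grows with clustering.

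My first attempt is to use the repulsion encoded in $\Phi$ itself. I would cover $\mathbb R^3$ by cubes of side $r_1/\sqrt3$ and let $n_Q$ be the number of points in cube $Q$. Within a cube all pairwise distances are $\le r_1$, so $\Phi\ge$ some large value there and contributes $\gtrsim\Phi(r_1)\,n_Q(n_Q-1)$ to $\sum\Phi$. Meanwhile the $\eta$-sum over pairs in cubes $Q,Q'$ at lattice distance $d$ is $\le c'(1+d)^{-s}n_Qn_{Q'}\le \tfrac12c'(1+d)^{-s}(n_Q^2+n_{Q'}^2)$. Because $s>3$, summing over $d$ gives $\sum\eta\le C''\sum_Q n_Q^2$. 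I would then bound $\sum_Q n_Q^2\le \sum_Q n_Q(n_Q-1)+N$.

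The clustering part $\sum_Q n_Q(n_Q-1)$ must be absorbed into $\kappa\sum\Phi$. This requires a margin, which I would create by proving Step 1 with constant $\delta+\tfrac12$ in place of $\delta+1$; by (iii) with $\limsup=\delta$, this still holds near $0$ after shrinking $r_1$. I would also choose $r_1$ small enough that $\tfrac12\Phi(r_1)\ge C''$. The cross-cube negative parts of $\Phi$ are then controlled by Lemma~\ref{thm:lower_bound}, which contributes at most $NL$ to the negative side. The remaining linear term $C''N$ plus this $NL$, after division by $N$, is the constant $L_1$.

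The step I expect to be delicate is the simultaneous choice of $r_1$: it must make $\Phi(r_1)$ large compared with the decay constant $c'$ of the tail, while $c'$ itself depends on $r_1$. I would handle this by fixing the tail constant on $[1,\infty)$ first, and treating $[r_1,1]$ by the finite bound $\sup_{[r_1,1]}r|\Phi'|$ with the same cube counting argument.
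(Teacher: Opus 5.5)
Your route differs from the paper's. The paper never estimates an error sum directly. It sets $\Phi_1(r)=(\delta+1)\Phi(r)-r|\Phi'(r)|$, notes that $\Phi_1$ is again an admissible potential, and applies Lemma~\ref{thm:lower_bound} to it with all $\alpha_{ik}=1$; rearranging $N^{-1}\sum_{i\neq k}\Phi_1\ge -L_1$ is exactly \eqref{A.2}. What has to be checked there is the same margin you exploit. The gap between $\limsup|r\Phi'|/\Phi=\delta$ and $\delta+1$ gives $\Phi_1\ge\tfrac12\Phi$ near $0$, so the core blow-up $r^s\Phi_1\to+\infty$ survives, and (i)--(ii) give $r^s\Phi_1\to0$ at infinity. (The paper's claim that $\Phi_1$ satisfies all of \eqref{1.2} overstates this, since (ii) would need $\Phi''$; but stability only uses the core and tail conditions.) You instead pass to the minorant $\Phi_1\ge\tfrac12\Phi-\eta$ and re-prove stability for it by cube counting. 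The paper's version is a one-liner but leans on a lemma whose proof it omits. Yours is self-contained and shows where $s>3$ is needed, once the step you flagged is repaired as below.

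The bookkeeping at the step you flag is off, and your proposed resolution fails as written. Decouple the two scales. Fix $r_0$ once by (iii), so that $\Phi>0$ and $r|\Phi'|\le(\delta+\tfrac12)\Phi$ on $(0,r_0]$, and put $\tilde\eta:=\bigl(r|\Phi'|-(\delta+\tfrac12)\Phi\bigr)^{+}+\tfrac12\Phi^{-}$. Then $\Phi_1\ge\tfrac12\Phi^{+}-\tilde\eta$, with $\tilde\eta=0$ on $(0,r_0]$, $\tilde\eta$ bounded and $o(r^{-s})$, all independent of the cube side $a\le r_0/\sqrt3$. Counting in physical rather than lattice distance, the cross-cube constant is $C''(a)=O(a^{-3})$: there are about $a^{-3}$ cubes within unit distance, and $\sum_d d^2(1+da)^{-s}\sim a^{-3}$. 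The within-cube gain is $\tfrac12\inf_{(0,\sqrt3a]}\Phi\ge c\,a^{-s}$ by (i). Absorption for small $a$ is therefore exactly where $s>3$ is used, not only in the convergence of the tail sum. Bounding the bad part on $[r_1,1]$ by $\sup_{[r_1,1]}r|\Phi'|$, with the cube side tied to $r_1$, does not work. Since $\Phi(r_1)-\Phi(1)\le\log(1/r_1)\sup_{[r_1,1]}r|\Phi'|$, that supremum times the $r_1^{-3}$ neighbour count overwhelms the gain $\tfrac12\inf_{(0,r_1]}\Phi\le\tfrac12\Phi(r_1)$.

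Also, do not invoke Lemma~\ref{thm:lower_bound} for cross-cube pairs only. For a selective pattern of $\alpha_{ik}$ it cannot hold once $\Phi$ takes negative values: two tight clusters at a distance where $\Phi<0$, with $\alpha_{ik}$ selecting the cross pairs, give a sum of order $-N^2$. Absorbing $\tfrac12\Phi^-$ into $\tilde\eta$, as above, handles those pairs by the same counting. Finally, use $\inf_{(0,r_1]}\Phi$ rather than $\Phi(r_1)$, since $\Phi$ need not be monotone.
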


\begin{proof}
Define
\begin{equation}
\Phi_1(r):=(\delta+1)\,\Phi(r)-r\,\big|\Phi'(r)\big|.
\label{A.3}\tag{A.3}
\end{equation}
We know \(\Phi_1\) satisfies assumption \eqref{1.2}.
Hence by Lemma \ref{thm:lower_bound} there exists \(L_1\ge 0\) (independent of \(N\) and of the configuration)
such that
\begin{equation}
N^{-1}\sum_{i,k=1}^{N}\Phi_1(\|y_i-y_k\|)\ \ge\ -\,L_1.
\label{A.4}\tag{A.4}
\end{equation}
Expanding \(\Phi_1\) from \eqref{A.3} and rearranging yields
\[\label{A.5}\tag{A.5}
N^{-1}\sum_{i,k} \|y_i-y_k\|\,\big|\Phi'(\|y_i-y_k\|)\big|
\ \le\ 
(\delta+1)\,N^{-1}\sum_{i,k}\Phi(\|y_i-y_k\|)\ +\ L_1,
\]
which is \eqref{A.2}.
\end{proof}

\textbf{Remark:}
Inequality \eqref{A.2} bounds the \emph{weighted} force by the potential energy plus a constant.
This is the quantitative device needed later to convert microscopic force sums to macroscopic fluxes.

\begin{theorem}[A.2]\label{thm:A.2}
If, at some instant, the total energy of the system is \(E_N\), then \(E_N\) is constant for all times and
\begin{equation}
m_ N\sum_{i=1}^{N}\|u_i\|^2\ \le\ 2E_N + LNm_N,
\label{A.6}\tag{A.6}
\end{equation}
where \(L\) is the constant in Theorem~5.1 (applied to \(\Phi\)).

If at time \(t_0\),
\begin{equation}
m_ N\sum_{i=1}^{N}\|x_i(t_0)\|^2\ =:\ C_N,
\label{A.7}\tag{A.7}
\end{equation}
then for all \(t\),
\begin{equation}
m_ N\sum_{i=1}^{N}\|x_i(t)\|^2
\ \le\ \Big(C_N^{1/2} + (2E_N+LM_N)^{1/2}\,|t-t_0|\Big)^2.
\label{A.8}\tag{A.8}
\end{equation}

\end{theorem}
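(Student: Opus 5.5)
The statement has two parts: energy conservation with the kinetic bound (A.6), and the second-moment bound (A.8). Both follow from conservation of the Hamiltonian and the lower bound of Lemma \ref{thm:lower_bound}.

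\textbf{Energy and kinetic bound.} Write the conserved total energy as
\[
E_N=\tfrac12 m_N\sum_i\|u_i\|^2+\tfrac12 m_N K\sum_{i\neq k}\Phi(|x_i-x_k|/\epsilon),
\]
which is the Hamiltonian of \eqref{1.3} with the scaling \eqref{1.1} absorbed. Its constancy along the flow follows from the antisymmetry of the Poisson bracket, $\frac{d}{dt}H=\{H,H\}=0$. This holds on the set where the flow is defined, and that set contains no collisions, since $\Phi\to+\infty$ at $0$ by \eqref{1.2}(i) and the energy is finite.

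Next, apply Lemma \ref{thm:lower_bound} with all $\alpha_{ik}=1$ to the vectors $y_i=x_i/\epsilon$. This gives $\sum_{i\neq k}\Phi\ge -NL$, where I absorb $K$ into $L$ (or take $K=1$ as the paper implicitly does). The potential part is therefore at least $-\tfrac12 m_N N L$. Rearranging the energy identity gives $m_N\sum\|u_i\|^2\le 2E_N+LNm_N$, which is (A.6). The constant $L$ may need a harmless adjustment by a factor $K$, depending on the convention.

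\textbf{Second-moment bound.} Set $Q(t)=m_N\sum_i\|x_i(t)\|^2$ and $M_N=Nm_N$. Differentiating and using $\dot x_i=u_i$ gives $\dot Q=2m_N\sum_i x_i\cdot u_i$. By Cauchy--Schwarz with the weights $m_N$, and then (A.6),
\[
|\dot Q|\le 2\,Q^{1/2}\bigl(m_N\textstyle\sum\|u_i\|^2\bigr)^{1/2}\le 2\,Q^{1/2}(2E_N+LM_N)^{1/2}.
\]
Where $Q>0$ this says $|\tfrac{d}{dt}Q^{1/2}|\le(2E_N+LM_N)^{1/2}$. Integrating from $t_0$ gives $Q(t)^{1/2}\le C_N^{1/2}+(2E_N+LM_N)^{1/2}|t-t_0|$, and squaring yields (A.8).

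The only delicate points are minor. First, at zeros of $Q$ the function $Q^{1/2}$ may fail to be differentiable. I handle this by working with $(Q+\eta)^{1/2}$, for which the same derivative bound holds, and then letting $\eta\to0$. Second, (A.8) needs the flow to exist for all $t$. This comes from the energy bound: velocities are bounded by (A.6), so positions cannot escape in finite time. Collisions are excluded because a uniform lower bound on the interaction of the remaining pairs (Lemma \ref{lem:psi_chi}) forces any approaching pair to have potential exceeding $E_N$. I expect this global-existence step to be the main obstacle, though it is standard.
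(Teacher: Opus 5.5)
Your proposal is correct and follows the paper's proof. Energy conservation together with Lemma~\ref{thm:lower_bound} (all $\alpha_{ik}=1$, $y_i=x_i/\epsilon$) gives (A.6), and Cauchy--Schwarz on $\dot Q=2m_N\sum_i x_i\cdot u_i$ followed by integrating the resulting bound on $\frac{d}{dt}Q^{1/2}$ gives (A.8); your additions (the $(Q+\eta)^{1/2}$ regularization, tracking $K$, global existence) go beyond what the paper writes, but for excluding collisions the right tool is Lemma~\ref{thm:lower_bound} with $\alpha_{ik}=0$ only on the approaching pair, which yields $\Phi(|x_j-x_k|/\epsilon)\le E_N/(m_NK)+NL/2$ for every pair, whereas Lemma~\ref{lem:psi_chi} only gives row-sum bounds $\Psi(h)$ that need not stay bounded below as $h\to0$.
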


\begin{proof}
Energy conservation follows from the microscopic equations of motion (Section~2). One need to use the scaling in section 2 to see $K$ here.
Write
\[
E_N \;=\; \frac{1}{2}m_ N \sum_{i=1}^{N}\|u_i\|^2\ +\
\frac{1}{2}m_ N\sum_{i\neq k} \Phi(\|x_i-x_k\|/\epsilon),
\]
and involve the lower bound from Theorem~5.1 applied to the \(\Phi\)-sum to obtain
\[
\frac{1}{2}m_ N \sum_i \|u_i\|^2\ \le\ E_N + \frac{1}{2}Nm_ NL,
\]
which gives us \ref{A.6}.

 Let \(f(t)=m_ N\sum_i \|x_i(t)\|^2\).
Then \(f'(t)=2m_ N\sum_i x_i\cdot u_i\).
By Cauchy–Schwarz and \ref{A.6},
\[
|f'(t)|\ \le\ 2\Big(m_ N\sum_i \|x_i\|^2\Big)^{1/2}
\Big(m_ N\sum_i \|u_i\|^2\Big)^{1/2}
\ \le\ 2\sqrt{f(t)}\, (2E_N+LNm_N)^{1/2}.
\]
Thus \(\dfrac{d}{dt}\sqrt{f(t)}\ \le\ (2E_N+LNm_N)^{1/2}\).
Integrating from \(t_0\) gives \(\sqrt{f(t)}\le \sqrt{f(t_0)}+(2E_N+LNm_N)^{1/2}|t-t_0|\),
which is \eqref{A.8}.
\end{proof}

\textbf{Remark:}
Theorem~\ref{thm:A.2} is the key uniform kinetic-energy bound.
It says that even if particles momentarily cluster (raising potential energy),
the mean kinetic energy cannot exceed a linear function of total energy and mass.

Estimate \eqref{A.8} shows that the second moment of positions grows at most $t^2$ in time, with linear rate controlled by the energy-mass bound.
This is the time-scale control needed for compactness of transforms below.

\section{Roadmap of the Argument}\label{sec:roadmap}

The derivation of the compressible Euler equations from
$N$-particle mechanics involves a long chain of constructions,
estimates, and limiting arguments of varying degrees of rigour.
This section provides a complete map of that chain before the
technical details begin.

\subsection{What Is and Is Not Proved}

To prevent any ambiguity about the status of the main result,
we state explicitly what the paper establishes.

\begin{description}[leftmargin=0pt, itemsep=4pt,
                    font=\normalfont\itshape]

\item[What is fully proved.]
Given Assumption \ref{Phi}, the a priori bounds of Theorems \ref{thm:LNbound}-\ref{thm:A.2}
hold for all time.
Given Assumptions \ref{Phi}--\ref{assump:R_N}, the Fourier transforms
$F(D_N^\gamma)$ for $\gamma=1,2,3,4$ are compact
(Theorem~2), and the partition functions $C_{Nkl}$
converge subsequentially to a convex limit $C(\rho,b)$
with controlled derivative asymptotics (Lemmas~1--4).
Given Conjectures \ref{conj:cell}--\ref{conj:deri} and the non-degeneracy condition
$C(\rho_0,B_0)>0$, the marginals $\phi_{Nl}$ converge
to the Gibbs form (Theorem \ref{Thm:limit}).
Finally, the Gibbs form is the unique bounded solution
to the equilibrium hierarchy (Theorem \ref{thm:gl}, Lemma \ref{lemma7.1}, \S\ref{Sectiongl})
for sufficiently small density.

\item[What requires open conjectures.]
The identification of the limiting time derivatives with
macroscopic field derivatives (Assumption \ref{assump:ergodic} and Conjecture \ref{conj:pi}),
the validity of the one-cell approximation (Conjecture \ref{conj:cell}),
the $x$-derivative bounds for $g_{Nkl}$ (Conjecture \ref{conj:deri}),
and the full Fourier compactness for the energy observable
(Conjecture \ref{conj1}) are all unproved.
The compressible Euler system of Section~\ref{sectioneuler}
is therefore a \emph{formal} derivation, contingent on the
resolution of all four conjectures.

\item[Comparison with modern results.]
The remaining gap is substantial.
Lanford's derivation of the Boltzmann equation~\cite{Lanford1975}
achieves rigour by restricting to short time and hard spheres.
The entropy-method approach of Yau and Varadhan for
hydrodynamic limits of lattice gases~\cite{DeMasiEtAl1984}
replaces the ergodic hypothesis with quantitative entropy
estimates. Morrey's 1955 program predates both frameworks
and should be understood as an early, ambitious attempt
to identify the correct limiting structure, not as
a complete proof.

\end{description}

Figure~\ref{fig:roadmap} shows how every ingredient of the
argument feeds into the final result.
Solid arrows denote proved logical implications;
dashed arrows denote dependencies that are required but
rest on an unproved conjecture.

\begin{figure}[p]   
\centering

\begin{tikzpicture}[
  proved/.style = {
    rectangle, draw=blue!65!black, fill=blue!6, thick,
    rounded corners=4pt, text width=2.85cm,
    minimum height=0.9cm, align=center, font=\scriptsize
  },
  assumed/.style = {
    rectangle, draw=green!55!black, fill=green!6, thick,
    rounded corners=4pt, text width=2.85cm,
    minimum height=0.9cm, align=center, font=\scriptsize
  },
  openconj/.style = {
    rectangle, draw=red!60!black, fill=red!4,
    thick, dashed, rounded corners=4pt, text width=2.6cm,
    minimum height=0.9cm, align=center, font=\scriptsize
  },
  final/.style = {
    rectangle, draw=orange!75!black, fill=orange!10,
    very thick, rounded corners=5pt, text width=4.4cm,
    minimum height=1.15cm, align=center, font=\scriptsize\bfseries
  },
  sa/.style = {   
    -{Stealth[length=5pt, width=3.5pt]},
    thick, draw=gray!65
  },
  da/.style = {   
    -{Stealth[length=5pt, width=3.5pt]},
    thick, draw=red!55,
    dash pattern=on 4.5pt off 2pt
  },
]


\node[assumed]  (A1) at ( 0.0,  0.0)
  {\textbf{Assumption 1}\\Potential $\Phi$\\conditions};

\node[assumed]  (A2) at ( 4.8,  0.0)
  {\textbf{Assumption 2}\\Cell decomposition};

\node[assumed]  (A3) at ( 9.6,  0.0)
  {\textbf{Assumption 3}\\Ergodic hypothesis};

\node[proved]   (AP) at ( 0.0, -2.2)
  {\textbf{Thms 5--6, App.}\\A priori energy\\and position bounds};

\node[proved]   (MN) at ( 4.8, -2.2)
  {\textbf{Section 3}\\Constraint mfd.\ $M_N$\\measure $\mu_{M_N}$};

\node[proved]   (BL) at ( 9.6, -2.2)
  {\textbf{Section 2}\\Formal balance\\laws for $\rho_N,u_N,e_N$};

\node[proved]   (T2) at ( 0.0, -4.5)
  {\textbf{Theorem 2}\\Fourier compact.\\$\gamma = 1,2,3,4$};

\node[openconj] (C1) at ( 2.7, -4.5)
  {\textbf{Conj.\ 1}\\Extend to\\$\gamma = 5$};

\node[proved]   (S5) at ( 5.5, -4.5)
  {\textbf{Section 5}\\Finite-$N$ formula\\for $\phi_{Nl}$};

\node[openconj] (C2) at ( 8.7, -4.5)
  {\textbf{Conj.\ 2}\\Balance laws hold\\for $\mu_{M_N}$ mod $r_N$};

\node[proved]   (L14) at ( 0.0, -6.8)
  {\textbf{Lemmas 1--4}\\Bounds \& subseq.\\ conv.\ of $C_{Nkl}$};

\node[openconj] (C3)  at ( 4.0, -6.8)
  {\textbf{Conj.\ 3}\\One-cell limit\\$=$ general limit};

\node[openconj] (C4)  at ( 7.8, -6.8)
  {\textbf{Conj.\ 4}\\$x$-derivatives of\\$g_{Nkl}$ bounded};

\node[proved]   (T3)  at ( 4.2, -9.0)
  {\textbf{Theorem 3}\\$\phi_{Nl}\to\phi_l$\\(Gibbs form)};

\node[proved]   (T4) at ( 1.8, -11.2)
  {\textbf{Theorem 4}\\Limiting BBGKY\\$\Leftrightarrow$ system for $g_l$};

\node[proved]   (GL) at ( 7.0, -11.2)
  {\textbf{Lemma 5 + \S7}\\Explicit series for\\$g_l$ (small $\rho$)};

\node[final]    (EU) at ( 4.6, -13.8)
  {\textbf{Section 8}\\Compressible Euler Equations\\
   $+$\; Entropy: $S_t + \bar{u}^{\alpha}S_{x^{\alpha}} = 0$};


\draw[sa] (A1.south) -- (AP.north);

\draw[sa] (A1.south) .. controls +(0.8,-0.5) and +(-1.2, 0.5) ..
          (MN.north west);

\draw[sa] (A2.south) -- (MN.north);

\draw[sa] (A3.south) -- (C2.north);

\draw[sa] (AP.south) -- (T2.north);

\draw[sa] (A1.south) .. controls +(-1.0,-2.2) and +(-1.0, 0.5) ..
          (L14.north west);

\draw[sa] (MN.east) -- (BL.west);

\draw[sa] (MN.south) -- (S5.north);

\draw[da] (C1.north west) .. controls +(-0.2, 0.35) and +(0.25,-0.25) ..
          (T2.north east);

\draw[sa] (S5.south) .. controls +(0,-0.4) and +(0.5, 0.4) ..
          (C3.north east);

\draw[sa] (L14.south east) -- (T3.north west);

\draw[da] (C3.south) -- (T3.north);

\draw[da] (C4.south) .. controls +(0,-0.5) and +(1.6, 0.3) ..
          (T3.east);

\draw[da] (C2.south) .. controls +(0,-0.8) and +(1.2, 1.2) ..
          (T3.north east);

\draw[sa] (T3.south west) -- (T4.north east);

\draw[sa] (T3.south east) -- (GL.north west);

\draw[sa] (T4.south) -- (EU.north west);

\draw[sa] (GL.south) -- (EU.north east);

\draw[sa] (BL.south) .. controls +(0,-5.5) and +(3.2, 0.3) ..
          (EU.east);

\draw[da] (C2.south) .. controls +(0,-5.0) and +(2.8, 0.2) ..
          (EU.east);

\draw[sa] (A3.south) .. controls +(1.2,-9.2) and +(3.0, 0.6) ..
          (EU.north east);

\begin{scope}[shift={(0.0,-15.6)}]
  \filldraw[fill=gray!4, draw=gray!40, rounded corners=4pt]
    (-0.4,-0.55) rectangle (10.0, 0.55);

  \filldraw[fill=blue!6,  draw=blue!65!black,  thick, rounded corners=2pt]
    (0.0,-0.28) rectangle (0.7, 0.28);
  \node[font=\scriptsize, anchor=west] at (0.80, 0) {Proved result};

  \filldraw[fill=green!6, draw=green!55!black, thick, rounded corners=2pt]
    (2.9,-0.28) rectangle (3.6, 0.28);
  \node[font=\scriptsize, anchor=west] at (3.70, 0) {Structural assumption};

  \filldraw[fill=red!4, draw=red!60!black, thick, dashed, rounded corners=2pt]
    (6.3,-0.28) rectangle (7.0, 0.28);
  \node[font=\scriptsize, anchor=west] at (7.10, 0) {Open conjecture};

  \draw[sa, gray!65] (0.0,-0.50) -- (0.7,-0.50);
  \node[font=\scriptsize, anchor=west] at (0.80,-0.50) {Proved implication};
  \draw[da]          (3.7,-0.50) -- (4.4,-0.50);
  \node[font=\scriptsize, anchor=west] at (4.50,-0.50) {Unproved dependency};
\end{scope}

\end{tikzpicture}

\caption{%
  Logical dependency diagram for the derivation of the Euler equations.
  \textbf{All four open conjectures} (red dashed boxes) must be resolved
  for Section~\ref{sectioneuler} to constitute a rigorous proof.
  The three structural assumptions (green boxes) supply the physical
  content of the argument and cannot be eliminated within Morrey's framework.%
}
\label{fig:roadmap}
\end{figure}

\end{document}